\documentclass[a4paper,11pt]{article}	
\usepackage{graphicx}
\usepackage{relsize}
\usepackage{hyperref}
\usepackage{comment}

\setlength{\textheight}{54.5pc}
\setlength{\textwidth}{150mm}
\setlength{\oddsidemargin}{5mm}
\setlength{\evensidemargin}{5mm}
\setlength{\marginparwidth}{20mm}
\setlength{\parindent}{0mm}
\setlength{\parskip}\medskipamount
\usepackage[usenames,dvipsnames]{color}

\usepackage{amsmath}
\usepackage{amsthm}
\usepackage{amssymb}
\usepackage{amscd}
\usepackage{enumerate}
\usepackage{pstricks}
\usepackage{pst-node}
\usepackage{esint}
\usepackage{mathtools}

\theoremstyle{plain}
\newtheorem{theorem}{Theorem}[section]
\newtheorem{corollary}[theorem]{Corollary}
\newtheorem{lemma}[theorem]{Lemma}
\newtheorem{proposition}[theorem]{Proposition}

\theoremstyle{definition}
\newtheorem{assumption}[theorem]{Assumption}
\newtheorem{definition}[theorem]{Definition}

\theoremstyle{remark}
\newtheorem{remark}{Remark}



\newcommand\R{\mathbb{R}}
\newcommand\M{\mathbb{M}}
\newcommand\N{\mathbb{N}}

\newcommand\Ra{R}

\renewcommand\boldsymbol{}



\newcommand\calW{\mathcal{W}}

\newcommand{\SO}[1]{\operatorname{SO}(#1)}
\newcommand\id{I}
\newcommand\sym{\operatorname{sym}}
\newcommand\skw{\operatorname{skw}}

\newcommand\tr{\operatorname{tr}}
\newcommand\axl{\operatorname{axl}}

\newcommand\ud{\,\mathrm{d}}
 
 \newcommand\dist{\operatorname{dist}}
\newcommand{\e}{\varepsilon}
\newcommand{\eh}{{\varepsilon{\scriptstyle(h)}}}
\newcommand\eps{\varepsilon}



\title{General homogenization of bending-torsion theory for inextensible rods from 3D elasticity}
\date{\today}

\author{Maroje Marohni\'{c} \\
University of Zagreb, Faculty of Natural Sciences and Mathematics,\\ Department of Mathematics,\\ Bijeni\v{c}ka 30, 10000 Zagreb, Croatia\\ Email:maroje.marohnic@math.hr \vspace{1ex}\\
Igor Vel\v{c}i\'{c}\\
University of Zagreb, Faculty of Electrical Engineering and Computing,\\ Unska 3, 10000 Zagreb, Croatia\\
Email: igor.velcic@fer.hr}

\begin{document}

\maketitle

\begin{abstract}
We derive, by means of $\Gamma$-convergence, the equations of homogenized bending rod starting from $3D$ nonlinear elasticity equations.
The main assumption is that the energy behaves like $h^2$ (after dividing by the order $h^2$ of vanishing volume) where $h$ is the thickness of the body.
We do not presuppose any kind of periodicity and work in the general framework. The result shows that, on a subsequence, we always obtain the equations of bending-torsion rod and identifies, in an abstract formulation, the limiting quadratic form connected with that model. This is a generalization from periodic to non-periodic homogenization of bending-torsion rod theory already present in the literature.
\vspace{10pt}

 \noindent {\bf Keywords:}
 elasticity, dimension reduction, homogenization, bending rod model. \\
\noindent {\bf AMS Subject Classification:} 35B27, 49J45, 74E30, 74Q05.

\end{abstract}

\tableofcontents[1]

|

\section{Introduction} \label{sectionprvi}
This paper is about derivation of homogenized bending-torsion theory for rods, starting from $3D$ elasticity by means of $\Gamma$-convergence.  The main novelty is that we do not presuppose any kind of periodicity, but work in a general framework.
There is a vast literature on deriving  rod, plate and shell equations from $3D$ elasticity.
The first  work in deriving the lower dimensional models by $\Gamma$-convergence techniques was \cite{AcBuPe} where the authors derived the string model. It was well known that the obtained models depend on the assumption what is the relation of the external loads (i.e. the energy) with respect to the thickness of the body $h$.
The first rigorous derivation of higher ordered models was done in \cite{FJM-02,FJM-06}) for the case of bending and von K\'arm\'an plate. The key mathematical ingredient in these cases was the theorem on geometric rigidity.

After these pioneering works there  is a vast literature on the rigourous derivation of lower dimensional models from $3D$ elasticity by means of $\Gamma$-convergence.  We mention only those works that refer to the derivation of the rod theories.

In \cite{MoMu03} the authors derive the bending-torsion rod theory assuming the fixed stored energy density function (without possible oscillations in the material). As usual in bending theories, they assume that the energy is of the order $h^2$, where $h$ is the thickness of the body (after division with the order of vanishing volume, which is $h^2$). In \cite{MoMu04} the authors derive the model in the so called von-K\'arm\'an regime where the order of energy is $h^4$. In \cite{MoraMuller08} the authors analyze the stationary points (i.e. the equations) in the case of bending rod and show that the limit equation is the one corresponding to the limit energy obtained by $\Gamma$-convergence. However, due to nonlinearity, it is not clear that the global minimizers satisfy these equations from which  the authors start the derivation (see \cite{MoraScardia12,DavoliMora1} for details).

 It is important to notice that the bending theory is still large deformation theory (although small strain theory), while the von-K\'arm\'an theory is a small displacement theory where the limit deformation is rigid motion and the energy depends on the correctors. Thus, we can say that the bending theory carries more nonlinearity. We refer the reader to \cite{Scardia09} where the author gave the full asymptotic (higher ordered) theory for curved rods.

This paper deals with the effects of simultaneous homogenization and dimensional reduction.
There is a vast literature on the effects of simultaneous homogenization and dimensional reduction on the limit equations, in different context.
In \cite{gustafsson06} the authors study the effects of simultaneous homogenization and dimensional reduction for linear elasticity system without periodicity assumption introducing the variant of $H$-convergence adapted to dimensional reduction. In \cite{Braides-Fonseca-Francfort-00} the authors study the same effects for nonlinear systems (membrane plate) by means of $\Gamma$-convergence, also without periodicity assumptions. In \cite{Courilleau04} the author studies nonlinear monotone operators in the context of simultaneous homogenization and dimensional reduction, without periodicity assumption. Much earlier in \cite{JurakTutek} the authors study the same effects for the linear rod case where it was assumed that the rod is homogeneous along its central line, but the microstructure is given in the cross section. We also mention the work of Arrieta on Laplace equation and thin domain with an oscillatory boundary (see e.g. \cite{Arrieta96}). Finally we emphasize the works  \cite{Neukamm-10,Neukamm-11}, where the author gave the systematic approach and combined the techniques from \cite{FJM-02,FJM-06} and two scale convergence to obtain the model of homogenized bending rod.

Recently, the techniques from \cite{FJM-02,FJM-06} were combined together with two-scale convergence to obtain the models of homogenized von  K\'arm\'an plate (see \cite{Velcic-12,NeuVel-12}), homogenized von K\'arm\'an shell (see \cite{Hornungvel12}) and homogenized bending plate (see \cite{Horneuvel12,Vel13}). These models were derived under the assumption of periodic oscillations of the material where it was assumed that the material oscillates on the scale $\eps(h)$, while the thickness of the body is $h$. The obtained models depend on the parameter $\gamma=\lim_{h \to 0} \tfrac{h}{\eh}$. In the case of von K\'arm\'an plate the situation $\gamma=0$ corresponds to the case when dimensional reduction dominates and the obtained model is the model of homogenized von K\'arm\'an plate and can be obtained as the limit case when $\gamma \to 0$. Analogously, the situation when $\gamma=\infty$ corresponds to the case when homogenization dominates and can again be obtained as the limit when $\gamma  \to \infty$; this is the model of von K\'arm\'an plate obtained starting from homogenized energy. In the case of von K\'arm\'an shell and bending plate the situation $\gamma=0$ was more subtle and leaded that the models depend on the further assumption of the relation between $\eps(h)$ and $h$. We obtained different models for the case $\eh^2 \ll h \ll \eh$ and $h \sim \eh^2$.

This paper derives the model of bending-torsion rod by simultaneous homogenization and dimensional reduction without any periodicity assumption and  generalize the work \cite{Neukamm-11}. In that work the author derived the bending-torsion rod theory by assuming periodic oscillations of the material on the central line of the rod.
The author used the tool of two-scale convergence, appropriate for periodic homogenization.
Here we show sort of stability result: one obtains the same type of equations starting from any kind of oscillating or non-oscillating material, where the oscillations can be done in any direction (even in the
cross-section). This is, because of these reasons, significant improvement of \cite{Neukamm-11} and uses the general approach from $\Gamma$-convergence. The similar work in this direction was \cite{Velcic-13} where the author derived the model of von-K\'arm\'an plate by means of simultaneous homogenization and dimensional reduction without periodicity assumption and thus generalized earlier work \cite{NeuVel-12}.

This paper, together with \cite{Velcic-13}, is the first treatment of simultaneous homogenization and dimensional reduction without periodicity assumption by variational techniques in the context of higher order models in elasticity, at least to our knowledge (membrane case  was already analyzed in \cite{Braides-Fonseca-Francfort-00}).
The main results are given in Theorem \ref{theorem1} and  Theorem \ref{theorem2}
where the lower bound and the upper bound is proved, respectively. We prove that, on a subsequence,
 the limit energy density is a quadratic form in the strain of the limit deformation (the limit deformation and the strain itself is the standard one for the bending rod case).

\subsection{Notation}
By $B(x,r)$ we denote the ball of radius $r>0$ around $x \in \R^n$ in Euclidean norm. We denote by $e_1,e_2,e_3$ the canonical basis in $\R^3$ and by $\nabla_h$ we denote the operator $\nabla_h=(\partial_1, \tfrac{1}{h} \partial_2, \tfrac{1}{h} \partial_3)$. By $\mathbb{M}^{m \times n}$ we denote the space of matrices with $m$ rows and $n$ columns, by  $\mathbb{M}^n$ we denote the space of quadratic matrices of order $n$.  $\mathbb{M}^n_{\sym}$ denotes the space of symmetric matrices of order $n$, while  $\mathbb{M}^n_{\skw}$  denotes the space of skew symmetric matrices of order $n$. For $A \in \mathbb{M}^n$ by $\sym A$ we denote the symmetric part of $A$; $\sym A=\tfrac{1}{2} (A+A^t)$, while by $\skw A$ we denote the skew symmetric part of $A$; $\skw A=\tfrac{1}{2}(A-A^t)$. For $A, B \in \mathbb{M}^n$ by $A \cdot B$ we  denote the  $\tr (AB^t)$.
$\iota:\mathbb{\R}^{3} \to \mathbb{M}^3$ denotes the natural inclusion
$$ \iota(m)=\sum_{i=1}^3 m_{i} e_i \otimes e_1. $$
For $A \in \mathbb{M}^3_{\skw}$  $\textrm{axl} A$  stands for the axial vector of $A$, i.e.,
$Ax= \axl A \wedge x$, for all $x \in \R^3$. It is easy to see that $\axl A=(A_{32}, A_{13},A_{21})^t$.

If $O \subset \R^n$ open,
by $W^{1,p}(O;M)$ we denote the subset of Sobolev space of functions taking values in $M \subset \R^m$ for a.e. $x \in O$. It is easy to see if $M$ is a subspace of $\R^m$ then $W^{1,p}(O;M)$ is a subspace of $W^{1,p}(O;\R^m)$. If $M$ is closed subset of $\R^m$ then $W^{1,p}(O;M)$ is a closed subset of $W^{1,p}(O;\R^m)$
in weak and strong topology.
For $S \subset \R^n$, by $\chi_S$ we denote the characteristic function of $S$; $\chi_S: \R^n \to \{0,1\}$. By $|S|$ we denote the Lebesgue measure of $S$. For $x \in \R$, by $\lfloor x \rfloor$ we denote the greatest integer less or equal to $x$.

\section{Derivation of the model} \label{sectiondrugi}

Let $\omega \subset \R^2$ be an open connected set with Lipschitz boundary. We define by $\Omega^h =[0,L] \times h\omega$ the reference configuration of the rod-like body. When $h=1$ we omit the superscript and write $\Omega=\Omega^1$. We may assume that the coordinate axes are chosen such that
 \begin{equation} \label{normalizacija}
 \int_{\omega} x_2 \ud x_2 \ud x_3= \int_{\omega} x_3 \ud x_2 \ud x_3= \int_{\omega} x_2 x_3 \ud x_2 \ud x_3=0.
 \end{equation}
We denote the moments of inertia by $\mu_{i}=\int_{\omega} x_i^2 \ud x_2 \ud x_3$ for $i=2,3$ and define $d_{\omega}=(0,x_2,x_3)^t$.

\subsection{General framework}
The following two definitions will give conditions on the energy densities.
\begin{definition}[Nonlinear material law]\label{def:materialclass}
  Let $0<\eta_1\leq\eta_2$ and $\rho>0$. The class
  $\calW(\eta_1,\eta_2,\rho)$ consists of all measurable functions
  $W\,:\,\R^{3 \times 3}\to[0,+\infty]$ that satisfy the following properties:
  \begin{align}
    \tag{W1}\label{ass:frame-indifference}
    &W\text{ is frame indifferent, i.e.}\\
    &\notag\qquad W(\Ra\boldsymbol F)=W(\boldsymbol F)\quad\text{ for
      all $\boldsymbol F\in\M^3$, $\boldsymbol R\in\SO
      3$;}\\
    \tag{W2}\label{ass:non-degenerate}
    &W\text{ is non degenerate, i.e.}\\
    &\notag\qquad W(\boldsymbol F)\geq \eta_1 \dist^2(\boldsymbol F,\SO 3)\quad\text{ for all
      $\boldsymbol F\in\M^3$;}\\
    &\notag\qquad W(\boldsymbol F)\leq \eta_2\dist^2(\boldsymbol F,\SO 3)\quad\text{ for all
      $\boldsymbol F\in\M^3$ with $\dist^2(\boldsymbol F,\SO 3)\leq\rho$;}\\
    \tag{W3}\label{ass:stressfree}
    &W\text{ is minimal at $\id$, i.e.}\\
    &\notag\qquad W(\id)=0;\\
    \tag{W4}\label{ass:expansion}
    &W\text{ admits a quadratic expansion at $\id$, i.e.}\\
    &\notag\qquad W(\id+\boldsymbol G)=Q(\boldsymbol G)+o(|\boldsymbol G|^2),\qquad\text{for all }\boldsymbol G\in\M^3,\\
    &\notag\text{where $Q\,:\,\M^3\to\R$ is a quadratic form.}
 \end{align}
\end{definition}
In the following definition we state our assumptions on the family $(W^h)_{h>0}$.
\begin{definition}[Admissible composite material]\label{def:composite}
  Let $0<\eta_1\leq\eta_2$ and $\rho>0$. We say that a family $(W^h)_{h>0}$
  \begin{equation*}
    W^h:\Omega\times \mathbb{M}^{3} \to \R^+\cup\{+\infty\},
  \end{equation*}
  describes an admissible composite material of class $\mathcal W(\eta_1,\eta_2,\rho)$ if
  \begin{enumerate}[(i)]
  \item  For each $h>0$, $W^h$ is almost surely equal to a Borel function on $\Omega\times\R^{3\times 3}$,
  \item $W^h(x,\cdot)\in\mathcal W(\eta_1,\eta_2,\rho)$ for  every $h>0$ and almost every $x\in\Omega$.
  \item there exists a monotone function $r:\R^+\to\R^+\cup\{+\infty\}$, such that $r(\delta)\to 0$ as
  $\delta\to 0$ and
  \begin{equation}\label{eq:94}
    \forall \boldsymbol G\in\R^{3\times 3}\,:\,\limsup_{h \to 0} |W^h(x,\id+\boldsymbol G)-Q^h(x,\boldsymbol G)|\leq r(|\boldsymbol G|) |\boldsymbol G|^2,
  \end{equation}
  for  almost all $x\in\Omega$, where $Q^h(x,\cdot)$ is a quadratic form given in Definition \ref{def:materialclass}.
\end{enumerate}
\end{definition}
Notice that  for each $h>0$ $Q^h$  can be written as the pointwise limit
\begin{equation} \label{defQ} (x,G) \to Q^h(x,G) := \lim_{\e \to 0}
\tfrac{1}{\e^2} W^h(x, Id+\e G).
\end{equation}
Therefore, it  inherits the measurability properties of $W^h$.

\begin{lemma}
  \label{lem:111}
  Let $(W^h)_{h>0}$ be as in Definition~\ref{def:composite} and let $(Q^h)_{h>0}$ be the family of the quadratic forms
  associated to $(W^h)_{h>0}$ through the expansion \eqref{ass:expansion}. Then
  \begin{enumerate}[(i)]
  \item[(Q1)] for all $h>0$ and almost all $x\in\Omega$ the map $Q^h(x,\cdot)$ is quadratic and
      satisfies
    \begin{equation*}
      \eta_1|\sym \boldsymbol G|^2\leq Q^h (x,\boldsymbol G)=Q^h(x,\sym \boldsymbol G)\leq \eta_2|\sym \boldsymbol G|^2\qquad\text{for all $ \boldsymbol G\in\M^3$.}
    \end{equation*}
  \end{enumerate}
\end{lemma}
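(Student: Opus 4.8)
The plan is to argue pointwise. Fix $h>0$ and a point $x\in\Omega$ for which $W^h(x,\cdot)\in\calW(\eta_1,\eta_2,\rho)$ --- this holds for almost every $x$ by part (ii) of Definition~\ref{def:composite} --- and abbreviate $W:=W^h(x,\cdot)$, $Q:=Q^h(x,\cdot)$. That $Q$ is a quadratic form is exactly \eqref{ass:expansion}, and by \eqref{defQ} we may use the representation $Q(G)=\lim_{\e\to0}\e^{-2}W(\id+\e G)$. All three assertions in (Q1) will be obtained by feeding into this limit the bounds on $W$ near $\SO3$ supplied by \eqref{ass:non-degenerate}.

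The one nonformal ingredient is the elementary second-order expansion of the squared distance to the rotation group,
\[
  \lim_{\e\to0}\frac{1}{\e^{2}}\,\dist^{2}(\id+\e G,\SO3)=|\sym G|^{2}\qquad\text{for every }G\in\M^{3}.
\]
For the upper bound on the left-hand side one tests with the rotation $\exp(\e\,\skw G)\in\SO3$ and uses $\exp(\e\,\skw G)=\id+\e\,\skw G+O(\e^{2})$, so that $\id+\e G-\exp(\e\,\skw G)=\e\,\sym G+O(\e^{2})$ and hence $\e^{-2}\dist^{2}(\id+\e G,\SO3)\le|\sym G|^{2}+O(\e)$. For the lower bound one picks, by compactness of $\SO3$, a minimizer $R_{\e}\in\SO3$ of $R\mapsto|\id+\e G-R|$; since $\id\in\SO3$ one gets $|R_{\e}-\id|\le 2\e|G|$, and $R_{\e}^{t}R_{\e}=\id$ forces $\sym(R_{\e}-\id)=-\tfrac12(R_{\e}-\id)^{t}(R_{\e}-\id)=O(\e^{2})$. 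Splitting $\id+\e G-R_{\e}=\e G-(R_{\e}-\id)$ into its symmetric part $\e\,\sym G+O(\e^{2})$ and its skew part, and using that these are orthogonal in the Frobenius inner product, gives $\dist^{2}(\id+\e G,\SO3)\ge\e^{2}|\sym G|^{2}-O(\e^{3})$.

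Granting this expansion, the rest is immediate. For $\e$ small enough, $\dist^{2}(\id+\e G,\SO3)\le\e^{2}|G|^{2}\le\rho$, so the upper bound in \eqref{ass:non-degenerate} applies and $W(\id+\e G)\le\eta_{2}\dist^{2}(\id+\e G,\SO3)$; dividing by $\e^{2}$ and letting $\e\to0$ yields $Q(G)\le\eta_{2}|\sym G|^{2}$. The lower bound in \eqref{ass:non-degenerate} holds for every $\e$, so in the same way $Q(G)\ge\eta_{1}|\sym G|^{2}$. In particular $Q\ge0$ on $\M^{3}$, and for $G\in\M^{3}_{\skw}$ we get $0\le Q(G)\le\eta_{2}|\sym G|^{2}=0$, so $Q$ vanishes on skew matrices. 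Writing $Q(\cdot)=B(\cdot,\cdot)$ for the associated symmetric bilinear form $B$, which is positive semidefinite since $Q\ge0$, the Cauchy--Schwarz inequality for $B$ gives $|B(\sym G,\skw G)|^{2}\le Q(\sym G)\,Q(\skw G)=0$; hence $Q(G)=Q(\sym G)+2B(\sym G,\skw G)+Q(\skw G)=Q(\sym G)$, which completes (Q1).

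The only step that is not purely formal is the distance expansion displayed above; it is classical --- essentially the linearization that underlies the geometric rigidity estimate --- but I would still include the short two-sided argument sketched here. Beyond that I foresee no real obstacle: everything else is a matter of passing to the limit in \eqref{defQ} and a single application of Cauchy--Schwarz.
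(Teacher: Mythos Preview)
Your argument is correct and is essentially a detailed unpacking of the paper's one-line proof, which simply states that ``(Q1) follows from (W2).'' You supply the standard second-order expansion $\dist^{2}(\id+\e G,\SO3)=\e^{2}|\sym G|^{2}+o(\e^{2})$ and pass to the limit in \eqref{defQ}, together with the Cauchy--Schwarz step for the identity $Q(G)=Q(\sym G)$ --- all of which the paper leaves implicit.
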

\begin{proof}
(Q1) follows from (W2).
\end{proof}
\begin{remark}\label{ocjenkv}
From (Q1)  we also obtain that
\begin{eqnarray} \label{ocjenakv1} && \\ \nonumber
|Q^h(x,G_1)-Q^h(x,G_2)| &\leq& \eta_2 |\sym G_1-\sym G_2|\cdot |\sym G_1+\sym G_2|,\\ & & \nonumber \hspace{10ex} \textrm{for a.e. } x \in \Omega,\forall h>0, G_1,G_2 \in \R^{3 \times 3}.
\end{eqnarray}
\end{remark}
We will assume that we are in bending regime, i.e., that the energy of minimizing sequence behaves like this
\begin{equation}\label{uvjetnastationary}
\int_{\Omega} W^h(x, \nabla_h y^h) \ud x \leq Ch^2, \textrm{ for some } C>0.
\end{equation}
This assumption can be replaced {\color{Blue} by} the assumption on the scaling of external loads, see \cite{FJM-06} for details.
\begin{theorem}
Assume that the sequence of deformations $y^h \in W^{1,2}(\Omega;\R^3)$  satisfies
 $$ \int_{\Omega} \dist^2(\nabla_h y^h, \SO 3) \ud x \leq C_1h^2,$$
 for some $C_1>0$, independent of $h$.
 Then there exist a constant $C>0$ and a sequence $(R^h)_{h>0} \subset C^{\infty}([0,L];\R^{3 \times 3})$, such that $R^h(x_1) \in \SO 3$ for every $x_1 \in [0,L]$ and
\begin{equation}\label{odnos1}
\|\nabla_h y^h -R^h\|_{L^2} \leq Ch,
\end{equation}
\begin{equation}\label{odnos2}
\|(R^h)'\|_{L^2}+\|h (R^h)''\|_{L^2} \leq C.
\end{equation}
\end{theorem}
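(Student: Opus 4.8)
This is the quantitative geometric rigidity estimate for thin rods, the one--slender--direction analogue of the Friesecke--James--M\"uller theorem, and the plan is to follow their scheme, localising to cells of size $h$ along the rod axis as in \cite{MoMu03}. First I would pass to the unscaled domain: writing $v^h(x_1,hx_2,hx_3)=y^h(x_1,x_2,x_3)$ on $\Omega^h=[0,L]\times h\omega$, the change of variables $(x_2,x_3)\mapsto(hx_2,hx_3)$ gives $\nabla v^h(x_1,hx_2,hx_3)=\nabla_h y^h(x_1,x_2,x_3)$, hence $\int_{\Omega^h}\dist^2(\nabla v^h,\SO 3)\,\mathrm dx=h^2\int_{\Omega}\dist^2(\nabla_h y^h,\SO 3)\,\mathrm dx\le C_1h^4$; all the asserted bounds are equivalent to corresponding bounds for $v^h$ on $\Omega^h$.

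Next I would partition $[0,L]$ into $N\sim L/h$ intervals $I_j=[jh,(j+1)h]$ and set $U_j=I_j\times h\omega$, $V_j=(I_j\cup I_{j+1})\times h\omega$. Each $U_j$ (and $V_j$) is a translate of the fixed Lipschitz domain $[0,1]\times\omega$ ($[0,2]\times\omega$) dilated by $h$, and since the rigidity constant is invariant under dilation, the geometric rigidity theorem applies on every cell with a single constant $C_\omega$: there are $\bar R_j\in\SO 3$ with $\int_{U_j}|\nabla v^h-\bar R_j|^2\le C_\omega\int_{U_j}\dist^2(\nabla v^h,\SO 3)$, and rotations on each $V_j$ satisfying the analogous estimate. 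Summing over $j$ yields the piecewise constant field $\bar R^h:=\bar R_j$ on $I_j$ with $\int_{\Omega^h}|\nabla v^h-\bar R^h|^2\le C\int_{\Omega^h}\dist^2\le CC_1h^4$, i.e. $\|\nabla_h y^h-\bar R^h\|_{L^2(\Omega)}\le Ch$. Comparing $\bar R_j$ with $\bar R_{j+1}$ through the rotation on $V_j$ and using $|U_j|\sim h^3$ gives $|\bar R_{j+1}-\bar R_j|^2\le \frac{C}{h^3}\int_{V_j}\dist^2(\nabla v^h,\SO 3)$, and since the $V_j$ overlap with multiplicity two, summation gives $\sum_{j}|\bar R_{j+1}-\bar R_j|^2\le \frac{C}{h^3}\int_{\Omega^h}\dist^2\le CC_1 h$.

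Finally I would smooth $\bar R^h$. Let $\phi_h(t)=h^{-1}\phi(t/h)$ with $\phi$ a standard mollifier. The bound $\sum_j|\bar R_{j+1}-\bar R_j|^2\le CC_1 h$ forces $|\bar R_{j+1}-\bar R_j|\le Ch^{1/2}$, so $\bar R^h*\phi_h$ is at each point an average of rotations mutually $O(h^{1/2})$--close and thus takes values in an $O(h^{1/2})$--neighbourhood of $\SO 3$; for $h$ small this lies in the tubular neighbourhood on which the nearest--point projection $\Pi:\M^3\to\SO 3$ is smooth with bounded first and second derivatives, and I set $R^h:=\Pi(\bar R^h*\phi_h)\in C^\infty([0,L];\SO 3)$. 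For \eqref{odnos1}, $\|\bar R^h*\phi_h-\bar R^h\|_{L^2}^2\le Ch\sum_j|\bar R_{j+1}-\bar R_j|^2\le CC_1h^2$ and $\dist(\bar R^h*\phi_h,\SO 3)$ is controlled by the local oscillation of $\bar R^h$, so $\|\nabla_h y^h-R^h\|_{L^2}\le\|\nabla_h y^h-\bar R^h\|_{L^2}+\|\bar R^h-R^h\|_{L^2}\le Ch$. For \eqref{odnos2}, since $\bar R^h$ is piecewise constant, $(\bar R^h*\phi_h)'=\sum_j(\bar R_{j+1}-\bar R_j)\phi_h(\cdot-jh)$ and $(\bar R^h*\phi_h)''=\sum_j(\bar R_{j+1}-\bar R_j)\phi_h'(\cdot-jh)$; using $\|\phi_h\|_{L^2}^2\sim h^{-1}$, $\|\phi_h'\|_{L^2}^2\sim h^{-3}$, the bounded overlap of the translated bumps, and $\sum_j|\bar R_{j+1}-\bar R_j|^2\le CC_1h$, one obtains $\|(\bar R^h*\phi_h)'\|_{L^2}^2\le CC_1$ and $\|h(\bar R^h*\phi_h)''\|_{L^2}^2\le CC_1$, and the chain rule for $\Pi$ (with $h\|(\bar R^h*\phi_h)'\|_{L^4}^2=O(h^{1/2})$ the only extra, negligible term) transfers these to $\|(R^h)'\|_{L^2}+\|h(R^h)''\|_{L^2}\le C$.

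The step requiring the most care is this last one: keeping the mollified field inside the region where the projection onto $\SO 3$ is well behaved, and bookkeeping the powers of $h$ through the convolution so that the first derivative lands at order $1$ and the second at order $h^{-1}$ --- which is precisely why the mollification scale must be taken equal to the partition scale $h$. The dilation--invariance of the rigidity constant over the thin cells and the elementary change of variables between $\Omega$ and $\Omega^h$ are routine.
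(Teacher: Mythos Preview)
Your argument is correct and is precisely the standard Friesecke--James--M\"uller scheme adapted to rods, as carried out in Mora--M\"uller; the paper does not give its own proof but simply refers to \cite[Proposition~4.1]{MoraMuller08}, whose proof proceeds exactly along the lines you sketch (local rigidity on cells of size $h$, neighbour comparison, mollification at scale $h$, projection onto $\SO 3$). The only cosmetic points you might add are the treatment of the endpoints of $[0,L]$ when mollifying (extend $\bar R^h$ constantly past $0$ and $L$) and the trivial adjustment when $L/h\notin\N$.
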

\begin{proof}
See the proof of  \cite[Proposition 4.1]{MoraMuller08}.
\end{proof}
From the expression \eqref{odnos2} we conclude that the sequence $(R^h)_{h>0}$, on a subsequence, converges weakly in $W^{1,2}([0,L];\R^{3 \times 3})$ (and thus strongly in $L^\infty([0,L];\R^{3 \times 3})$).
\subsection{Characterization of relaxation field}
We will need the following characterization of the rod deformation.
\begin{lemma}\label{lmgriso}
 There exists a constant $C(\omega)>0$ independent of $h$ such that for given $u \in W^{1,q}(\Omega,\R^3)$, $1<q<\infty$, we have that
\begin{equation}\label{dekompozicija2}
u=a^h+B^h(x_1,hx_2,hx_3)^t+\left(\begin{array}{c}-(\varphi^h_{1})'(x_1)x_2-(\varphi^h_{2})'(x_1)x_3+z^h_{1}(x)\\
\tfrac{1}{h} \varphi^h_{1}(x_1) +w^h(x_1) x_3+z^h_{2}(x)\\
\tfrac{1}{h} \varphi^h_{2}(x_1)-w^h(x_1)x_2+z^h_{3}(x)
\end{array} \right),
\end{equation}
where $a_h \in \R^3$, $B^h \in \M^3_{\textrm{skw}}$, $\varphi_\alpha \in W^{2,q}([0,L])$ for $\alpha=1,2$, $w^h \in W^{1,q}([0,L])$, $z^h \in W^{1,q}(\Omega;\R^3)$ and
\begin{eqnarray}\label{ocjena1}
&&\| \varphi^h_{1}\|_{W^{2,q}}+\| \varphi^h_{2}\|_{W^{2,q}}+\| w^h\|_{W^{1,q}} \leq C(\omega)\|\sym \nabla_h u\|_{L^q}, \\
\label{ocjena2}
&&\int_{\Omega} |z^h|^q+\int_{\Omega} |\nabla_h z^h|^q \leq C(\omega) \|\sym \nabla_h u\|_{L^q}.
\end{eqnarray}
\end{lemma}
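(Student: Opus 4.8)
The plan is to follow the classical Griso decomposition of displacements for thin rods (as in Griso's work on the asymptotic behaviour of structures), adapted to the scaled gradient $\nabla_h$. First I would apply the standard Griso decomposition to $u$ on the fixed domain $\Omega=[0,L]\times\omega$: there exist an elementary rod displacement (a mean translation plus an infinitesimal rotation field $R(x_1)$ acting on the cross-section variable) plus a warping remainder $\bar z$, with the key estimates controlling the $W^{2,q}$-norm of the rotation and the $W^{1,q}$-norm of the warping by $\|\sym\nabla u\|_{L^q}$ on the unscaled domain. The point of the present lemma is to re-express this with the correct powers of $h$ so that only $\|\sym\nabla_h u\|_{L^q}$ appears on the right-hand side. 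Concretely, writing $u$ in the ansatz \eqref{dekompozicija2}, the functions $\varphi^h_1,\varphi^h_2$ encode the components of the infinitesimal rotation that produce bending (the derivatives $(\varphi^h_\alpha)'$ multiply $x_2,x_3$ in the first component), $w^h$ encodes the twist, $a^h$ and $B^h$ collect the rigid part, and $z^h$ is the warping; the factors $\tfrac1h$ in the second and third components of the column are dictated precisely by the structure of $\nabla_h$ versus $\nabla$.

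The key steps, in order, are: (1) recall/quote the Griso decomposition on $\Omega$ with $q$-estimates; (2) track how the scaling $x\mapsto(x_1,hx_2,hx_3)$ transforms the gradient, i.e. relate $\sym\nabla u$ computed for the scaled map to $\sym\nabla_h u$ — this is where the $\tfrac1h$ factors and the weights $x_2,x_3$ enter, and one sees that the cross-sectional derivatives pick up a factor $h$ which exactly cancels against the $\tfrac1h$ inserted in the ansatz; (3) identify the pieces: set $a^h$ from the mean of $u$, $B^h$ from the mean (skew part) of $\nabla_h u$ over $\Omega$, let $(\varphi^h_1)',(\varphi^h_2)',w^h$ be the appropriate components of the rescaled infinitesimal rotation $R^h(x_1)$ minus its constant part, and let $z^h$ absorb everything else; (4) derive \eqref{ocjena1} for $\varphi^h_\alpha$ in $W^{2,q}$ and $w^h$ in $W^{1,q}$ from the Griso estimate on the rotation field after rescaling; (5) derive \eqref{ocjena2} for $z^h$ in $W^{1,q}(\Omega)$ (both the $L^q$-norm of $z^h$ and of $\nabla_h z^h$) from the warping estimate, using a Poincaré inequality on $\omega$ to control $\|z^h\|_{L^q}$ by $\|\nabla_h z^h\|_{L^q}$ after we normalize the cross-sectional averages of $z^h$ to zero.

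The main obstacle is bookkeeping: verifying that the rescaling produces exactly the $\tfrac1h$ placement shown in \eqref{dekompozicija2} and that no spurious powers of $h$ survive in either \eqref{ocjena1} or \eqref{ocjena2}. In particular one must be careful that $\nabla_h z^h$ (not $\nabla z^h$) appears on the left of \eqref{ocjena2} and check that the Griso warping estimate, which in unscaled form bounds $\nabla\bar z$, translates into a bound on $\nabla_h z^h$ with an $h$-independent constant $C(\omega)$ — this works because the cross-sectional components of $\nabla_h$ carry the compensating $\tfrac1h$. A secondary technical point is ensuring the normalization \eqref{normalizacija} of $\omega$ (vanishing first and mixed second moments) is used where needed so that the bending functions $\varphi^h_\alpha$ and the twist $w^h$ decouple cleanly and the averages defining $a^h,B^h$ are consistent; once the scaling is pinned down, the estimates are an immediate transcription of the known Griso inequalities. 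I would therefore present this as an adaptation of the Griso decomposition, citing it, and spend the bulk of the argument on the scaling computation in step (2).
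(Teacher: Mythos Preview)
Your approach is essentially the same as the paper's: quote Griso's decomposition, identify the pieces $a^h,B^h,\varphi^h_\alpha,w^h,z^h$ from the elementary displacement and the warping, and read off the estimates.  However, you are planning unnecessary work in step~(2).  Griso's decomposition as stated in \cite[Theorem~2.1]{Grisodec08} is \emph{already} formulated for the thin/scaled setting: it gives $u=U_e+\bar u$ with $U_e=\mathcal U(x_1)+\mathcal R(x_1)\times(hx_2e_2+hx_3e_3)$ and the bounds
\[
\|\bar u\|_{L^q}\le C(\omega)h\|\sym\nabla_h u\|_{L^q},\quad
\|\nabla_h\bar u\|_{L^q}+h\|\mathcal R'\|_{L^q}+\|\mathcal U_1'\|_{L^q}+\|\mathcal U_2'-\mathcal R_3\|_{L^q}+\|\mathcal U_3'+\mathcal R_2\|_{L^q}\le C(\omega)\|\sym\nabla_h u\|_{L^q},
\]
with $\nabla_h$ and $\|\sym\nabla_h u\|_{L^q}$ already appearing on both sides.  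So there is no rescaling to track and no ``spurious powers of $h$'' to chase; the bookkeeping you worry about has been done inside Griso's theorem.  The paper simply writes down explicit formulas for $a^h,B^h,\varphi^h_\alpha,w^h,z^h$ in terms of $\mathcal U,\mathcal R,\bar u$ (e.g.\ $\varphi^h_1(x_1)=h(\int_0^{x_1}\mathcal R_3-x_1\mathcal R_3(0))$, $w^h=-h(\mathcal R_1-\mathcal R_1(0))$, $z^h_1=\mathcal U_1-\mathcal U_1(0)+\bar u_1$, etc.), checks \eqref{dekompozicija2} by substitution, and obtains \eqref{ocjena1}--\eqref{ocjena2} immediately from the Griso estimates together with the one-dimensional Poincar\'e inequality on $[0,L]$ (not on $\omega$).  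Also, the moment normalization \eqref{normalizacija} is not used in this lemma at all; it enters only in the subsequent Corollary~\ref{cormaroje}.
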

\begin{proof}
The proof follows from the Griso's decomposition (see \cite[Theorem 2.1]{Grisodec08}): there are functions $U_e$ and $\bar{u}$ such that $u=U_e+\bar{u}$, where $\bar{u} \in W^{1,q}(\Omega;\R^3)$ and $U_e$ is the  elementary displacement, i.e.,  there are functions $\mathcal{U}  \in W^{1,q}([0,L];\R^3)$ and $\mathcal{R} \in W^{1,q}([0,L];\R^3)$ which are independent of $x_2$ and $x_3$ variables such that
\begin{equation*}
U_e=\mathcal{U} + \mathcal{R} \times (h x_2e_2 + h x_3e_3).
\end{equation*}
Also the following estimates hold
\begin{align}
&\left\Vert \bar{u}\right\Vert_{L^q}\leq C(\omega) h \left\Vert \sym \nabla_h u\right\Vert_{L^q}   \quad \left\Vert \nabla_h \bar{ u}\right\Vert_{L^q} \leq C(\omega) \left\Vert \sym \nabla_h u\right\Vert_{L^q}, \label{ocjenagr1}\\
&h\left\Vert \mathcal{R}'  \right\Vert_{L^q}+\left\Vert\mathcal{U}_1' \right\Vert_{L^q}+\left\Vert \mathcal{U}_2' -\mathcal{R}_3 \right\Vert_{L^q}+\left\Vert \mathcal{U}_3' +\mathcal{R}_2 \right\Vert_{L^q} \leq C(\omega) \left\Vert \sym \nabla_h u\right\Vert_{L^q}. \label{ocjenagr2}
\end{align}
We define the functions:
\begin{align*}
a^h &= \left(\mathcal{U}_1(0), \mathcal{U}_2(0),\mathcal{U}_3(0)\right),\\
B^h &=\left( \begin{array}{ccc}
0 & -\mathcal{R}_3(0) & \mathcal{R}_2(0) \\
\mathcal{R}_3(0) & 0 & - \mathcal{R}_1(0) \\
-\mathcal{R}_2(0) & \mathcal{R}_1(0) & 0
\end{array} \right),
\\
\varphi^h_1(x_1) &=h\left(\int_0^{x_1} \mathcal{R}_3(t) dt  - x_1 \mathcal{R}_3(0)\right),\\
\varphi^h_2(x_1) &=h\left(- \int_0^{x_1} \mathcal{R}_2(t) dt + x_1 \mathcal{R}_2(0)\right),\\
 w^h(x_1) &=-h ({\mathcal{R}}_1(x_1) -\mathcal{R}_1(0)), \\
 z^h_1(x) &= \mathcal{U}_1(x_1) -\mathcal{U}_1(0) + \bar{u}_1(x),\\
 z^h_2(x) &= \mathcal{U}_2 (x_1) - \mathcal{U}_2(0)- \int_0^{x_1} \mathcal{R}_3(t) dt +\bar{u}_2(x),\\
 z^h_3(x) &= \mathcal{U}_3(x_1)  -\mathcal{U}_3(0)+ \int_0^{x_1} \mathcal{R}_2(t) dt +\bar{u}_3(x).
 \end{align*}
It is straightforward to check that  (\ref{dekompozicija2}) holds. To prove the estimates (\ref{ocjena1}) and (\ref{ocjena2}) we use the Poincar\'{e} inequality, (\ref{ocjenagr1}) and (\ref{ocjenagr2}) to deduce
\begin{eqnarray*}
\| \varphi^h_{1}\|_{W^{2,q}}+\| \varphi^h_{2}\|_{W^{2,q}}+\| w^h\|_{W^{1,q}} \leq  C_P h \Vert \mathcal{R}'\Vert_{L^q} \leq C(\omega)\|\sym \nabla_h u\|_{L^q},
\end{eqnarray*}
and
\begin{eqnarray*}
\Vert z^h\Vert_{L^q} +\Vert \nabla_h z^h \Vert_{L^q}
&\leq & C_P\left( \left\Vert \mathcal{U}_1'\right\Vert_{L^q}+ \left\Vert \mathcal{U}_2' -\mathcal{R}_3 \right\Vert_{L^q}+\left\Vert \mathcal{U}_3' +\mathcal{R}_2 \right\Vert_{L^q}\right)  \\ & &
+\Vert  \bar{u}\Vert_{L^{q}}+\Vert\nabla_h  \bar{u}\Vert_{L^{q}}  +h \Vert \mathcal{R}'\Vert_{L^q}
\leq C(\omega) \left\Vert \sym \nabla_h u\right\Vert_{L^q} .
\end{eqnarray*}
\end{proof}
\begin{corollary} \label{cormaroje}
Let  the sequence $(u^h)_{h>0} \subset  W^{1,q}(\Omega,\R^3)$ is such that  $(\|\sym \nabla_h u^h\|_{L^q})_{h>0}$ is bounded, $(u^h_1,hu^h_{2},hu^h_{3})$ converges to zero strongly in $L^q$ and $\int_{\omega} x_3 u^h_2$ (or $\int_{\omega} x_2 u^h_3$) converges to zero strongly in $L^q$. Then there are sequences $(a^h)_{h>0} \in \R^3$, $(B^h)_{h>0} \in \mathbb{M}^3_{\skw}$, $(z^h)_{h>0} \subset W^{1,q}(\Omega;\R^3)$, $(\varphi^h_{\alpha})_{h>0} \subset W^{2,q}([0,L];\R^3)$, for $\alpha=1,2$, and $(w^h)_{h>0} \subset W^{1,q}([0,L];\R^3)$ such that \eqref{dekompozicija2}, \eqref{ocjena1}, \eqref{ocjena2} is valid. Moreover, we have:
\begin{enumerate}
\item
$(a^h_1,ha^h_2,ha^h_3) \to 0$, $hB^h \to 0$, $z^h_{1} \to 0$ strongly in $L^q$, $w^h \to 0$ strongly in $L^q$ and  for $\alpha=1,2$ $\varphi^h_{\alpha} \to 0$  strongly in $W^{1,q}$ as $h \to 0$.
\item  For the following  decomposition of $z^h$,
$z^h=\overline{z}^h+\tilde{z}^h$, where $\overline{z}^h=\int_{\omega} z^h$, we have that
$\bar{z}^h_1 \to 0$ strongly in $L^q$ and $\|\tilde{z}^h\|_{L^{q}} \leq C(\omega)h\|\sym \nabla_h u^h\|_{L^q}$, for some $C(\omega)>0$.
\item  There are sequences $(A^h)_{h>0} \subset  W^{1,q}([0,L];\mathbb{M}^3_{\skw})$ and  $(v^h)_{h>0} \subset W^{1,q}(\Omega;\R^3)$, such that , $A^h \to 0$ and $v^h \to 0$ strongly in $L^{q}$ and the following decomposition holds
$$
\sym \nabla_h u^h=\sym \iota\left((A^h)' d_{\omega}\right)+\sym \nabla_h z^h=\sym \iota \left((A^h)' d_{\omega}\right)+\sym \nabla_h v^h+O(h),
$$
where $O(h) \to 0$ strongly in $L^q$ as $h \to 0$. Moreover, we obtain that
\begin{equation}\label{ocjenaahvh}
\|A^h\|_{W^{1,q}}+\|v^h \|_{L^{q}}+\|\nabla_h v^h \|_{L^{q}}\leq C(\omega) \|\sym \nabla_h u^h \|_{L^q}.
\end{equation}
\end{enumerate}
\end{corollary}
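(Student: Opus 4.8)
The plan is to apply Lemma \ref{lmgriso} to each $u^h$ to obtain the decomposition \eqref{dekompozicija2} with the corresponding objects $a^h, B^h, \varphi^h_\alpha, w^h, z^h$, and then to extract the claimed convergences from the extra hypotheses on $(u^h)_{h>0}$ together with the bounds \eqref{ocjena1}, \eqref{ocjena2}. First I would record that since $(\|\sym\nabla_h u^h\|_{L^q})_{h>0}$ is bounded, \eqref{ocjena1}--\eqref{ocjena2} immediately give that $(\varphi^h_\alpha)_{h>0}$ is bounded in $W^{2,q}$, $(w^h)_{h>0}$ in $W^{1,q}$, and $(z^h)_{h>0}$ in $W^{1,q}(\Omega;\R^3)$; these bounds are what will eventually be upgraded to convergence to zero. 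The point of the normalization hypotheses — that $(u^h_1,hu^h_2,hu^h_3)\to 0$ in $L^q$ and $\int_\omega x_3 u^h_2\to 0$ (or $\int_\omega x_2 u^h_3\to 0$) in $L^q$ — is to kill the ambiguity in the decomposition, i.e.\ to pin down the rigid-motion-type pieces $a^h$ and $B^h$.

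For item (1): I would feed the explicit formula \eqref{dekompozicija2} into the hypotheses. Testing the first component against $1$ over $\omega$ (using the normalization \eqref{normalizacija} so the $x_2,x_3$ moments drop) isolates $a^h_1 + (\text{something involving }\varphi, B, \bar z_1)$; testing the second and third components, scaled by $h$, isolates $ha^h_2, ha^h_3$ and $hB^h$. Using $\int_\omega x_2\,u^h_1$ and $\int_\omega x_3\,u^h_1$ together with the boundedness of $\varphi^h_\alpha$ in $W^{2,q}$ lets me conclude $\varphi^h_\alpha\to 0$ in $W^{1,q}$ (the second derivatives stay only bounded, consistent with the statement). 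The extra moment condition $\int_\omega x_3 u^h_2\to 0$ handles the torsion variable $w^h$, giving $w^h\to 0$ in $L^q$; and then $z^h_1\to 0$ in $L^q$ follows by subtracting everything already shown to vanish from the first component of \eqref{dekompozicija2}.

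For item (2): writing $z^h=\bar z^h+\tilde z^h$ with $\bar z^h=\int_\omega z^h$, the estimate $\|\tilde z^h\|_{L^q}\le C(\omega)h\|\sym\nabla_h u^h\|_{L^q}$ is exactly a Poincar\'e–Wirtinger inequality on the cross-section applied to $z^h-\bar z^h$, controlled by $\|\nabla_h z^h\|_{L^q}$ via \eqref{ocjena2} and the $\tfrac1h$ in $\nabla_h$ in the cross-sectional directions; and $\bar z^h_1\to 0$ in $L^q$ is inherited from $z^h_1\to 0$ by averaging. For item (3): I would set $A^h$ to be the skew matrix built from $\varphi^h_\alpha{}'$ and $w^h$ (so that $\iota((A^h)'d_\omega)$ reproduces, up to symmetrization, the structured linear-in-$(x_2,x_3)$ part of $\nabla_h$ of the bracketed term in \eqref{dekompozicija2}), compute $\sym\nabla_h u^h$ directly from \eqref{dekompozicija2}, and observe that the $a^h$- and $B^h$-contributions are either constant (hence killed by $\sym\nabla_h$ on the relevant block) or of order $h$ after the $\nabla_h$ scaling; the leftover is $\sym\iota((A^h)'d_\omega)+\sym\nabla_h z^h$. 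Replacing $z^h$ by $v^h:=\tilde z^h$ (or by $z^h$ minus a suitable affine-in-$x_1$ function) changes $\sym\nabla_h z^h$ only by an $O(h)$ term in $L^q$ by item (2). The bound \eqref{ocjenaahvh} then follows by collecting \eqref{ocjena1}, \eqref{ocjena2} and the Poincar\'e estimate from item (2), and $A^h\to 0$, $v^h\to 0$ in $L^q$ follow from item (1) and item (2) respectively.

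The main obstacle I anticipate is bookkeeping rather than conceptual: carefully choosing which linear functionals (cross-sectional averages, first moments against $x_2$, $x_3$) to apply to which component of \eqref{dekompozicija2}, so that each application isolates exactly one of $a^h, B^h, \varphi^h_\alpha, w^h, \bar z^h_1$ without circular dependence, and tracking the powers of $h$ through $\nabla_h$ so that the $O(h)$ error in item (3) is genuinely $O(h)$ in $L^q$. One has to be slightly careful that $\varphi^h_\alpha$ converges only in $W^{1,q}$ and not in $W^{2,q}$ — the second derivatives remain merely bounded, which is consistent with and in fact needed for the structured form in item (3), since $(A^h)'$ involves $\varphi^h_\alpha{}''$ and $w^h{}'$ and only needs to be bounded in $L^q$, not small.
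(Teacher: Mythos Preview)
Your plan for items (a) and (b) is essentially the paper's approach: apply Lemma \ref{lmgriso}, use the Poincar\'e--Wirtinger inequality on cross-sections for $\tilde z^h$, and test the components of \eqref{dekompozicija2} against suitable moments to extract the convergences. One point you gloss over is that the paper first \emph{renormalizes} $a^h$ and $B^h$ so that $\int_0^L w^h=\int_0^L\varphi^h_\alpha=\int_0^L x_1\varphi^h_\alpha=\int_\Omega z^h=0$; without this, testing the first component against $x_2$ only yields $(\varphi^h_1)'-hB^h_{12}\to 0$ in $L^q$, not each term separately. The separation is then obtained by testing the \emph{second} and \emph{third} components (scaled by $h$) against $x_1-\tfrac{L}{2}$, which kills the $\varphi^h_\alpha$-terms thanks to the normalization. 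Your phrasing ``testing \dots\ isolates $ha^h_2,ha^h_3$ and $hB^h$'' is correct in spirit but skips this renormalization step.

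There is, however, a genuine gap in your plan for item (c). Setting $v^h:=\tilde z^h$ (or $z^h$ minus an affine function of $x_1$) does \emph{not} give $\sym\nabla_h z^h=\sym\nabla_h v^h+O(h)$. Indeed $\sym\nabla_h z^h-\sym\nabla_h\tilde z^h=\sym\nabla_h\bar z^h$, and since $\bar z^h$ depends only on $x_1$, this matrix has off-diagonal entries $\tfrac12(\bar z^h_2)',\tfrac12(\bar z^h_3)'$, which are merely bounded in $L^q$ (by \eqref{ocjena2}), not $o(1)$. Item (b) controls $\|\tilde z^h\|_{L^q}$, not $\|\sym\nabla_h\bar z^h\|_{L^q}$; these are unrelated. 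The paper's fix is nontrivial: mollify $(\bar z^h_2,\bar z^h_3)$ with a kernel of radius $r_h\gg h$ to obtain $p^h\in C^\infty$ with $\|p^h-(\bar z^h_2,\bar z^h_3)\|_{L^q}\to 0$, $\|p^h\|_{W^{1,q}}\le C$, and crucially $h\|p^h\|_{W^{2,q}}\to 0$; then set
\[
v^h=z^h-(0,p^h_2,p^h_3)^t+\big(hx_2(p^h_2)'+hx_3(p^h_3)',\,0,\,0\big)^t.
\]
The ``Kirchhoff-type'' first-component correction is designed so that, after taking $\sym\nabla_h$, its off-diagonal contributions exactly cancel the $\tfrac12(p^h_\alpha)'$ terms coming from subtracting $(0,p^h_2,p^h_3)^t$, leaving only a $(1,1)$-entry involving $h(p^h_\alpha)''$, which tends to zero by the mollification estimate. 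This is the missing idea in your plan.
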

\begin{proof}
Since $\int_{\omega}\tilde{z}^h=0$ we conclude from the Poincar\'{e} inequality that
$$
\|\tilde{z}^h\|_{L^q} \leq C(\omega) h\|\nabla_h z^h\|_{L^q} \leq C(\omega)h \|\sym \nabla_h u^h\|_{L^q}.
$$
Thus, $\tilde{z}^h \to 0$ strongly in $L^q$.
After redefining $a^h$ and $B^h$ we can assume that
\begin{equation} \label{norma22}
\int_{\Omega} z^h=\int_0^L \overline{z}^h=\int_0^L w^h=\int_0^L \varphi^h_{\alpha}=\int_0^L x_1\varphi^h_{\alpha}=0, \textrm{ for } \alpha=1,2.
\end{equation}
Integrating  the first equation in \eqref{dekompozicija2}  over $\omega$ and taking into account the choice of coordinate axes (\ref{normalizacija}), we conclude that $a^h_1 |\omega|+\overline{z}^h_{1} \to 0$ strongly in $L^q(\omega)$. From this, by integration over $[0,L]$, we obtain that $a^h_1 \to 0$ in $L^q(\Omega)$ and, consequently,  $\overline{z}^h_{1} \to 0$ strongly in $L^q(\Omega)$. By multiplying the \eqref{dekompozicija2} with $x_2$ and $x_3$ and taking into account (\ref{ocjena1}) and (\ref{ocjena2}) we obtain that $hB^h_{12}$ and $h B^h_{13}$ are bounded in $L^q$ norm.

We multiply the second and third equation of \eqref{dekompozicija2} by
$h(x_1-\tfrac{L}{2})$, integrate over $\Omega$ and take the limit as $h\to 0$ to obtain that $hB^h_{12} \to 0$ and $h B^h_{13} \to 0$ strongly in $L^q$.  Again, integrating the second and third equation of \eqref{dekompozicija2} over $\Omega$ and taking the limit as $h \to 0$  we deduce  that $ha^h_2 \to 0$ and  $ha^h_3 \to 0$ in $L^q$.  We also obtain that $\varphi^h_{\alpha} \to 0$ strongly in $W^{1,q}$, since it is bounded in $W^{2,q}$.

We multiply the second equation in \eqref{dekompozicija2}  by $x_3$ and integrate over $\omega$. Using the decomposition of $z^h$ we conclude that $hB^h_{23}+w^h \to 0$ strongly in $L^q$. From this, using \eqref{norma22}, it follows that $hB^h_{23} \to 0$ and $w^h \to 0$ strongly in $L^q$. This finishes the proof of (a) and (b).

To  prove (c) we take the sequence $(p^h_2,p^h_3)_{h>0} \subset C^\infty((0,L),\R^2)$ such that
$$
\|p^h-(\overline{z}^h_2,\overline{z}^h_3)\|_{L^q} \to 0,\quad \|p^h\|_{W^{1,q}} \leq C\|(\overline{z}^h_2,\overline{z}^h_3)\|_{W^{1,q}},\quad h\|p^h\|_{W^{2,q}} \to 0.
$$
for some $C>0$. The sequence $p^h$ can be constructed by mollification of $(\overline{z}^h_2,\overline{z}^h_3)$ such that the  mollifiers are of radius $r_h \gg h$.
We define
$$ v^h=z^h-(0,p^h_2,p^h_3)^t+(hx_2 p^h_2+hx_3 p^h_3,0,0)^t,\quad O(h)=(-hx_2  (p^h_2)'-hx_3 (p^h_3)') e_1 \otimes e_1. $$
and conclude the proof.

 From the first equation in \eqref{dekompozicija2} (after integration over $\omega$) we obtain that $a^h_1 |\omega|+\overline{z}^h_{1} \to 0$ strongly in $L^q$. From this, by integration over $[0,L]$, we obtain $a^h_1 \to 0$ and then $\overline{z}^h_{1} \to 0$ strongly in $L^q$.
    We also conclude that $hB^h_{12}$, $h B^h_{13}$ is bounded.
   From the second and third equation of \eqref{dekompozicija2} (after multiplication with $h$) we then obtain that $hB^h_{12} \to 0$, $h B^h_{13} \to 0$. This is done by multiplication with $x_1-\tfrac{L}{2}$ and then integrating over $\Omega$. Now only integration over $\Omega$ also gives $ha^h_2 \to 0$, $ha^h_3 \to 0$.
   We also obtain that
   $\varphi^h_{\alpha} \to 0$ strongly in $W^{1,q}$, since it is bounded in $W^{2,q}$ and converges to zero strongly in $L^q$.

The second equation we multiply with $x_3$ and integrate over $\omega$. Using the decomposition of $z^h$ we conclude that $hB^h_{23}+w^h \to 0$ strongly in $L^q$. From this, using \eqref{norma22}, it follows that $hB^h_{23} \to 0$ and $w^h \to 0$ strongly in $L^q$. This finishes the proof of (a) and (b). To obtain (c) we find the sequence $(p^h_2,p^h_3)_{h>0} \subset C^\infty((0,L),\R^2)$ such that for some $C>0$
$$
\|p^h-(\overline{z}^h_2,\overline{z}^h_3)\|_{L^q} \to 0,\quad \|p^h\|_{W^{1,q}} \leq C\|(\overline{z}^h_2,\overline{z}^h_3)\|_{W^{1,q}},\quad h\|p^h\|_{W^{2,q}} \to 0.
$$
This can be done by mollification of $(\overline{z}^h_2,\overline{z}^h_3)$ with mollifiers of radius $r_h \gg h$.
We define
$$ v^h=z^h-(0,p^h_2,p^h_3)^t+(hx_2 p^h_2+hx_3 p^h_3,0,0)^t,\quad O(h)=(-hx_2  (p^h_2)'-hx_3 (p^h_3)') e_1 \otimes e_1. $$
\end{proof}
\begin{lemma} \label{peter1}
Let $q\geq1$ and let
$A \in  W^{1,q}(\Omega;\mathbb{M}^3_{\skw})$ and $v \in W^{1,q}(\Omega;\R^3)$. Then there exists $u^h \in W^{1,q}(\Omega;\R^3)$ such that
$$\sym \nabla_h u^h=\iota(A' d_{\omega})+\sym \nabla_h v. $$
If $A=0$ and $v=0$ in the neighbourhood of $\{0,L\} \times \omega$ then $u^h=0$ in a neighbourhood of $\{0\} \times \omega$ and $u^h$ is constant in a neighbourhood of $\{L\} \times \omega$.
If
$(A^{h})_{h>0} \subset W^{1,q}([0,L];\mathbb{M}^3_{\skw})$ and $(v^h)_{h>0} \subset W^{1,q}(\Omega;\R^3)$ are such that
 $A^h \to 0$ and $v^h \to 0$ strongly in $L^q$ then $(u_1^h, hu_2^h,hu_3^h) \to 0$ and $\int_{\omega} x_3 u^h_2 \to 0$ and $\int_{\omega} x_2 u^h_3 \to 0$ strongly in $L^q$.
\end{lemma}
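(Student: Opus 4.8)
First I would write $u^h$ down explicitly, guided by the fact that $\iota(A'd_\omega)$ is a matrix whose only nonzero column is the first: the natural corrector is the $x_1$‑primitive of $A$ placed into the $e_1$‑slot. Precisely, set
\[
u^h \;:=\; v \;+\; A\,d_\omega \;+\; \tfrac1h\,\widetilde{A}\,e_1,\qquad \widetilde{A}(x_1):=\int_0^{x_1}A(t)\,\ud t,
\]
with $A$ and $\widetilde{A}$ viewed as skew‑matrix valued (so $\widetilde{A}\in W^{2,q}$ and $\widetilde{A}'=A$); for each fixed $h>0$ this belongs to $W^{1,q}(\Omega;\R^3)$. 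The verification is then a one‑line computation: writing $w^h:=u^h-v$, one gets $\partial_1 w^h=A'd_\omega+\tfrac1h Ae_1$, $\partial_2 w^h=Ae_2$ and $\partial_3 w^h=Ae_3$, whence
\[
\nabla_h w^h=(A'd_\omega)\otimes e_1+\tfrac1h\bigl(Ae_1\otimes e_1+Ae_2\otimes e_2+Ae_3\otimes e_3\bigr)=\iota(A'd_\omega)+\tfrac1h A .
\]
Since $A\in\M^3_{\skw}$, the term $\tfrac1h A$ has vanishing symmetric part, so $\sym\nabla_h u^h=\iota(A'd_\omega)+\sym\nabla_h v$ (the right‑hand side being read as a symmetric matrix, i.e.\ with an implicit $\sym$ in front of $\iota$, as in Corollary~\ref{cormaroje}).

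Next I would read off the boundary behaviour. If $A$ and $v$ vanish on $[0,\delta)\times\omega$ and on $(L-\delta,L]\times\omega$, then $\widetilde{A}\equiv 0$ on $[0,\delta)$ (a primitive that vanishes at $0$), so $u^h\equiv 0$ in a neighbourhood of $\{0\}\times\omega$; and on $(L-\delta,L]$ we have $\widetilde{A}(x_1)=\int_0^{L-\delta}A(t)\,\ud t$, a constant skew matrix, while $A=v=0$ there, so $u^h$ equals the constant vector $\tfrac1h\bigl(\int_0^{L-\delta}A\bigr)e_1$ in a neighbourhood of $\{L\}\times\omega$.

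For the convergence statement the point to keep in mind is that $u^h$ is \emph{not} bounded as $h\to0$ --- the factor $\tfrac1h$ is exactly what cancels the $\tfrac1h$ inside $\nabla_h$ --- so only the three specific quantities in the claim converge, and they do so for structural reasons. First, $(\widetilde{A}^h e_1)_1=0$ because the diagonal of a skew matrix vanishes, hence $u^h_1=v^h_1+(A^hd_\omega)_1\to0$ in $L^q$ as soon as $v^h,A^h\to0$ in $L^q$. Second, $hu^h_2=hv^h_2+h(A^hd_\omega)_2+(\widetilde{A}^he_1)_2$, where the first two terms tend to $0$ and $(\widetilde{A}^h e_1)_2(x_1)=\int_0^{x_1}(A^h(t)e_1)_2\,\ud t$ tends to $0$ uniformly in $x_1$ because $\|A^h\|_{L^1([0,L])}\le C\|A^h\|_{L^q}\to0$; likewise $hu^h_3\to0$. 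Third, $\tfrac1h(\widetilde{A}^h e_1)_2$ does not depend on $(x_2,x_3)$, so $\int_\omega x_3\,\tfrac1h(\widetilde{A}^h e_1)_2 = \tfrac1h(\widetilde{A}^h e_1)_2\int_\omega x_3 = 0$ by the choice of axes \eqref{normalizacija}, while $\int_\omega x_3 v^h_2\to0$ and $\int_\omega x_3(A^hd_\omega)_2=\mu_3(A^h)_{23}\to0$ in $L^q([0,L])$; hence $\int_\omega x_3 u^h_2\to0$, and $\int_\omega x_2 u^h_3\to0$ follows in the same way using $\int_\omega x_2=0$.

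The hard part --- such as it is --- will be simply to spot the corrector (primitive of $A$ in $x_1$, rescaled by $h^{-1}$, sitting in the first column) and then to convince oneself that its $h^{-1}$‑growth is invisible to the three limits above; the skew‑symmetry of $A$, which kills both its diagonal and its symmetric part, together with the normalization \eqref{normalizacija}, is precisely what makes this work. I should also note that the identity $\nabla_h w^h=\iota(A'd_\omega)+\tfrac1h A$ relies on $A$, and hence $\widetilde{A}$, depending only on $x_1$ (so that $\partial_2 w^h=Ae_2$, $\partial_3 w^h=Ae_3$ with no further terms); this is the situation in which the lemma is applied, through Corollary~\ref{cormaroje}, and I would carry the computation out in that setting.
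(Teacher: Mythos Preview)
Your construction is exactly the paper's: writing out $u^h=v+Ad_\omega+\tfrac1h\widetilde{A}e_1$ componentwise gives precisely the formula the paper records (and leaves the verification to the reader), so the approach is the same. Your additional remarks---that the identity requires $A$ to depend only on $x_1$, and that the lemma's displayed formula tacitly carries a $\sym$ in front of $\iota(A'd_\omega)$---are accurate and clarify two small imprecisions in the statement.
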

\begin{proof}
Everything follows from the definition $u=\Big(A_{12}(x_1)   x_2+A_{13}(x_1) x_3 ,$ $\tfrac{1}{h} \int_0^{x_1} A_{21}(t) \ud t+A_{23}(x_1)x_3 ,$ $ \tfrac{1}{h} \int_0^{x_1} A_{31}(t) \ud t+A_{32}(x_1)x_2 \Big)^t+v$.
\end{proof}
\subsection{Definition of limit energy density}
 We now proceed as in \cite{Velcic-13}
For any open set $O \subset [0,L]$, function $m$  in $L^2(\Omega;\R^3)$ and sequence $(h_n)_{n \in \N}$   monotonly decreasing to zero we define
\begin{eqnarray}
\label{gli} & &\\ \nonumber K^{-}_{(h_n)_{n \in \N}}(m,O)&=&\inf \Big\{\liminf_{n \to \infty}  \int_{O \times \omega} Q^{h_n}\left(x,\iota(m)+\nabla_{h_n} \psi^{h_n}\right) \ud x: \\ \nonumber & & \hspace{-17ex} (\psi_1^{h_n},h_n\psi_2^{h_n},h_n\psi_3^{h_n}) \to 0 \textrm{ strongly in } L^2(O \times \omega;\R^3), \ \int_{\omega} x_3 \psi^{h_n}_2 \to 0 \textrm{ strongly in } L^2(O) \Big\},
 \\
\label{gls} & &\\ \nonumber K^{+}_{(h_n)_{n \in \N}}(m,O)&=&\inf \Big\{\limsup_{n \to \infty} \int_{A \times I} Q^{h_n}\left(x,\iota(m)+\nabla_{h_n} \psi^{h_n}\right) \ud x:\\ \nonumber & & \hspace{-17ex}  (\psi_1^{h_n},h_n\psi_2^{h_n},h_n \psi_3^{h_n}) \to 0 \textrm{ strongly in } L^2(O \times \omega;\R^3), \ \int_{\omega} x_3 \psi^{h_n}_2 \to 0 \textrm{ strongly in } L^2(O)  \Big\}.
\end{eqnarray}
\begin{remark}\label{minat2}
By using standard diagonalization argument  it can be shown that for any $(h_n)_{n\in \N}$ monotonly decreasing to $0$ the infimum in expressions \eqref{gli} and \eqref{gls} are attained.
\end{remark}

\begin{lemma} \label{lem:ocjena}
There exists a constant $C>0$ dependent only on $\eta_1,\eta_2$ such that for every sequence $(h_n)_{n \in \N}$ monotonly decreasing to $0$ and $A \subset [0,L]$ open set the following inequality is valid
\begin{eqnarray}\label{ocjena1111}
\left| K^{-}_{(h_n)_{n \in \N}}(m_1,A)-K^{-}_{(h_n)_{n \in \N}}(m_2,A)\right|&\leq& C \|m_1-m_2\|_{L^2}\left(\|m_1\|_{L^2}+\|m_2\|_{L^2}\right),\\\nonumber & &  \ \forall m_1,m_2 \in L^2(\Omega,\R^3),
\end{eqnarray}
The analogous claim holds for $K^{+}_{(h_n)_{n \in \N}}$.
\end{lemma}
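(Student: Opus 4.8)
The strategy is the standard one for such equi-Lipschitz estimates: test the functional evaluated at $m_1$ with a (near-)optimal competitor for $m_2$, and absorb the resulting mismatch using the quadratic-form estimate \eqref{ocjenakv1} of Remark~\ref{ocjenkv}. Fix an open $A\subset[0,L]$ and $m_1,m_2\in L^2(\Omega;\R^3)$. By Remark~\ref{minat2} choose $(\psi^{h_n})_n$ admissible in \eqref{gli} for $m_2$ (i.e.\ $(\psi_1^{h_n},h_n\psi_2^{h_n},h_n\psi_3^{h_n})\to0$ in $L^2(A\times\omega;\R^3)$ and $\int_\omega x_3\psi_2^{h_n}\to0$ in $L^2(A)$) that attains the infimum, so $\liminf_n\int_{A\times\omega}Q^{h_n}(x,\iota(m_2)+\nabla_{h_n}\psi^{h_n})\,\ud x=K^{-}_{(h_n)}(m_2,A)$. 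The admissibility constraints do not involve $m$, so the very same sequence is admissible for $m_1$, whence
\[
K^{-}_{(h_n)}(m_1,A)\ \le\ \liminf_{n\to\infty}\int_{A\times\omega}Q^{h_n}\big(x,\iota(m_1)+\nabla_{h_n}\psi^{h_n}\big)\,\ud x .
\]

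Next I would compare the two integrands pointwise. Writing $G_i:=\iota(m_i)+\nabla_{h_n}\psi^{h_n}$, one has $\sym G_1-\sym G_2=\sym\iota(m_1-m_2)$ and $\sym G_1+\sym G_2=\sym\iota(m_1+m_2)+2\sym\nabla_{h_n}\psi^{h_n}$, so \eqref{ocjenakv1}, the elementary bound $|\sym\iota(v)|\le|v|$, and Cauchy--Schwarz give
\[
\Big|\int_{A\times\omega}\!\big(Q^{h_n}(x,G_1)-Q^{h_n}(x,G_2)\big)\,\ud x\Big|\ \le\ \eta_2\,\|m_1-m_2\|_{L^2}\Big(\|m_1+m_2\|_{L^2}+2\|\sym\nabla_{h_n}\psi^{h_n}\|_{L^2(A\times\omega)}\Big),
\]
where the $m_i$-norms over $A\times\omega$ are controlled by those over $\Omega$. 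To close this I need $\|\sym\nabla_{h_n}\psi^{h_n}\|_{L^2}$ bounded in terms of $\|m_2\|_{L^2}$. This is where (Q1) of Lemma~\ref{lem:111} enters: $\eta_1|\sym G_2|^2\le Q^{h_n}(x,G_2)$, hence
\[
\|\sym\nabla_{h_n}\psi^{h_n}\|_{L^2(A\times\omega)}\ \le\ \eta_1^{-1/2}\Big(\int_{A\times\omega}Q^{h_n}(x,G_2)\,\ud x\Big)^{1/2}+\|m_2\|_{L^2},
\]
and since taking $\psi\equiv0$ as a competitor (admissible) together with (Q1) shows $K^{-}_{(h_n)}(m_2,A)\le\eta_2\|m_2\|_{L^2}^2$, along any subsequence where $\int_{A\times\omega}Q^{h_n}(x,G_2)\,\ud x$ converges to $K^{-}_{(h_n)}(m_2,A)$ we get $\|\sym\nabla_{h_n}\psi^{h_n}\|_{L^2(A\times\omega)}\le C(\eta_1,\eta_2)\|m_2\|_{L^2}$ eventually.

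Combining the three displays, along such a subsequence $\int_{A\times\omega}Q^{h_n}(x,G_1)\,\ud x\le\int_{A\times\omega}Q^{h_n}(x,G_2)\,\ud x+C\|m_1-m_2\|_{L^2}(\|m_1\|_{L^2}+\|m_2\|_{L^2})$ with $C=C(\eta_1,\eta_2)$; passing to the limit (the right-hand energy tends to $K^{-}_{(h_n)}(m_2,A)$, the error term is already independent of $n$) and recalling $\liminf$ over a subsequence dominates the full $\liminf$ yields $K^{-}_{(h_n)}(m_1,A)\le K^{-}_{(h_n)}(m_2,A)+C\|m_1-m_2\|_{L^2}(\|m_1\|_{L^2}+\|m_2\|_{L^2})$. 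Exchanging the roles of $m_1$ and $m_2$ gives \eqref{ocjena1111}. For $K^{+}_{(h_n)}$ the argument is identical, replacing each $\liminf$ by $\limsup$ and using $\limsup(a_n+b_n)\le\limsup a_n+\limsup b_n$; here $\limsup_n\int_{A\times\omega}Q^{h_n}(x,G_2)\,\ud x=K^{+}_{(h_n)}(m_2,A)<\infty$ already forces $\|\sym\nabla_{h_n}\psi^{h_n}\|_{L^2}$ bounded for large $n$. The only real point of care — the main obstacle, such as it is — is this bookkeeping with $\liminf$/$\limsup$ of sums: one must restrict to the (sub)sequence on which the competitor's energy converges, so that its symmetrized scaled gradient is a priori controlled, before estimating the mismatch term; everything else is a direct application of \eqref{ocjenakv1} and (Q1).
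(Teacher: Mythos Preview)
Your argument is correct and is precisely the standard route one expects here: test with the optimal competitor for the other datum, use the bilinear estimate \eqref{ocjenakv1} to control the difference of the integrands, and bound the stray $\|\sym\nabla_{h_n}\psi^{h_n}\|_{L^2}$ via the coercivity (Q1) together with the trivial upper bound $K^{\pm}(m_2,A)\le\eta_2\|m_2\|_{L^2}^2$ coming from the zero test sequence. The paper itself gives no proof and merely cites \cite[Lemma~3.5]{Velcic-13}, whose argument is exactly the one you wrote out; your handling of the $\liminf/\limsup$ bookkeeping (pass to a subsequence realizing the $\liminf$ for $K^-$, and for $K^+$ use subadditivity of $\limsup$) is the only mildly delicate point and you treat it correctly.
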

\begin{proof}
The proof goes in an analogous way as the proof of \cite[Lemma 3.5]{Velcic-13}.
\end{proof}

If $A$ and $B$ are subsets of $[0,L]$, we denote by $A\ll B$ if $\bar{A}$ is compact and contained in $B$. The following definitions are standard for $\Gamma$-convergence techniques (see \cite{DM93}).

\begin{definition}
We say that a family of sets $\mathcal{D}$ of $\mathcal{A}$ is dense in the family $\mathcal{A}$ if for every $A,B  \in \mathcal{A}$, with $A \ll B$, there exists $D \in \mathcal{D}$, such that $A\ll D \ll B$.
\end{definition}

Let $\mathcal{D}$ denote the countable family of open subsets of $[0,L]$ which is dense in the class $\mathcal{A}$ of all open subsets of $[0,L]$ and such that every $D \in \mathcal{D}$ is a finite union of open intervals which are subsets of $[0,L]$.

By using previous lemma and diagonal procedure we can also easily argument the following claim.
\begin{lemma} \label{podniz}
For every sequence $(h_n)_{n \in \N}$ monotonly decreasing to zero there exists a subsequence, still denoted by $(h_n)_{n \in \N}$, such that
$$ K^+_{(h_n)_{n \in \N}}(m, D)=K^{-}_{(h_n)_{n \in \N}} (m,D),\quad \forall m \in L^2(\Omega,\R^{3}), \ \forall D \in \mathcal{D}. $$
\end{lemma}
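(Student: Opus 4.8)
The plan is to combine the Lipschitz-type estimate of Lemma \ref{lem:ocjena} with a standard Cantor diagonal extraction over a countable set, exploiting that $K^{\pm}$ are bounded and that we only need equality on the countable family $\mathcal{D}$ for a countable dense set of arguments $m$. First I would observe that by Lemma \ref{lem:111}(Q1) we have $0 \leq Q^{h_n}(x,\iota(m)+\nabla_{h_n}\psi^{h_n}) \leq \eta_2 |\sym(\iota(m)+\nabla_{h_n}\psi^{h_n})|^2$, and combined with the non-degeneracy $Q^{h_n}(x,G)\geq \eta_1|\sym G|^2$ one gets, by testing with $\psi^{h_n}=0$, that $0 \leq K^{-}_{(h_n)}(m,D) \leq K^{+}_{(h_n)}(m,D) \leq \eta_2 |D\times\omega|\,\|\iota(m)\|^2_{L^\infty}$ — actually more carefully $\leq \eta_2 \|\iota(m)\|_{L^2(D\times\omega)}^2$ — so both quantities are finite real numbers for every $m \in L^2$ and every $D \in \mathcal{D}$.

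Next I would fix a countable set $\mathcal{M} \subset L^2(\Omega;\R^3)$ that is dense in $L^2(\Omega;\R^3)$, e.g. $\R^3$-valued simple functions with rational coefficients on a countable generating algebra of sets. The index set $\mathcal{M}\times\mathcal{D}$ is countable, so by a diagonal argument I can extract a subsequence of $(h_n)$, still denoted $(h_n)$, along which $\lim_{n} \int_{D\times\omega} Q^{h_n}(x,\iota(m)+\nabla_{h_n}\psi^{h_n,*})\,\ud x$ exists for the optimal (recovery) sequences — more to the point, along which both $K^{-}_{(h_n)}(m,D)$ and $K^{+}_{(h_n)}(m,D)$ converge to the same value for every $(m,D)\in\mathcal{M}\times\mathcal{D}$. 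The cleanest way to arrange this: for each pair $(m,D)$ use Remark \ref{minat2} to pick a sequence $\psi^{h_n}$ attaining $K^{-}_{(h_n)}(m,D)$; passing to a subsequence the $\limsup$ becomes a genuine limit, hence equals the $\liminf$, and since this particular $\psi^{h_n}$ is admissible for the $K^{+}$ infimum it forces $K^{+}_{(h_n)}(m,D) \leq \lim_n \int = K^{-}_{(h_n)}(m,D)$ along that subsequence; the reverse inequality $K^{-}\leq K^{+}$ is automatic. Do this successively for an enumeration of $\mathcal{M}\times\mathcal{D}$ and diagonalize.

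Finally I would promote the equality from $m\in\mathcal{M}$ to all $m\in L^2(\Omega;\R^3)$ using the uniform continuity estimate \eqref{ocjena1111}: given arbitrary $m \in L^2$ and $D\in\mathcal{D}$, pick $m_k\in\mathcal{M}$ with $m_k\to m$ in $L^2$; then
\begin{equation*}
|K^{\pm}_{(h_n)}(m,D)-K^{\pm}_{(h_n)}(m_k,D)| \leq C\|m-m_k\|_{L^2}\big(\|m\|_{L^2}+\|m_k\|_{L^2}\big) \to 0,
\end{equation*}
uniformly in $n$ (the constant $C$ depends only on $\eta_1,\eta_2$), and since $K^{+}_{(h_n)}(m_k,D)=K^{-}_{(h_n)}(m_k,D)$ for every $k$ along the extracted subsequence, letting $k\to\infty$ yields $K^{+}_{(h_n)}(m,D)=K^{-}_{(h_n)}(m,D)$ for all $m\in L^2(\Omega;\R^3)$ and all $D\in\mathcal{D}$. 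The only mild subtlety — the step I expect to require the most care — is the bookkeeping of the diagonal extraction so that a single subsequence works simultaneously for the whole countable index set $\mathcal{M}\times\mathcal{D}$ while preserving, for each pair, that the chosen near-optimal test sequences turn their $\limsup$ into true limits; this is routine but must be written so that the later density argument via \eqref{ocjena1111} indeed closes the gap for \emph{every} $m$, not just those in $\mathcal{M}$.
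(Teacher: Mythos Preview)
Your proposal is correct and follows exactly the approach the paper indicates (``previous lemma and diagonal procedure''): diagonalize over a countable dense $\mathcal{M}\subset L^2$ times the countable family $\mathcal{D}$, and then use the Lipschitz estimate \eqref{ocjena1111} to pass from $\mathcal{M}$ to all of $L^2$. The only point you should make explicit in the single-pair step is that $K^{-}$ does not decrease when passing to a subsequence (any admissible test $(\psi_k)$ on the subsequence extends by zero to an admissible test on the full sequence with no larger $\liminf$), which is what closes the chain $K^{-}_{\text{orig}}\leq K^{-}_{\text{sub}}\leq K^{+}_{\text{sub}}\leq \lim_k\int = K^{-}_{\text{orig}}$.
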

We will now make an assumption on the sequence $(h_n)_{n \in \N}$ monotonly decreasing to zero and family $(Q^{h_n})_{n \in \N}$.
\begin{assumption}
  \label{ass:main}
  For given $(h_n)_{n \in \N}$ monotonly decreasing to zero we suppose that  we have
  $$K^{+}_{(h_n)_{n \in \N}}(m,D)=K^{-}_{(h_n)_{n \in \N}}(m,D)=:K(m,D),\quad \forall m \in L^2(\Omega,\R^{3}), \ \forall D \in \mathcal{D}. $$
\end{assumption}
Although the numbers $K(m,D)$ also depend on the sequence, we will not write it, since it will be clear from the context on which sequence we are referring to.
\begin{remark}
As in \cite[Lemma 3.8]{Velcic-13} we can see that if a sequence $(h_n)_{n \in \N}$ satisfies the Assumption \ref{ass:main} than we have that
$$K^{+}_{(h_n)_{n \in \N}}(m,O)=K^{-}_{(h_n)_{n \in \N}}(m,O)=:K(m,O),\quad \forall m \in L^2(\Omega,\R^{3}), \ \forall O \subset [0,L] \textrm{ open}. $$
\end{remark}



The following lemma is analogous with \cite[Lemma 3.10]{Velcic-13}. We shall not prove it here.
\begin{lemma}\label{lem:improveglavnazamjena}
Let  $(h_n)_{n \in \N}$ be a sequence monotonly decreasing to $0$ which satisfies Assumption \ref{ass:main}.
 Take $m \in  L^2(\Omega,\R^3)$ and $O \subset \omega$ open.
Then there exists a subsequence $(h_{n(k)})_{k\in \N}$ and $(\vartheta_k)_{k \in \N} \subset W^{1,2}(O \times \omega,\R^3)$ such that
\begin{enumerate}[(a)]
\item
$(\vartheta_{k,1},h_{n(k)}  \vartheta_{k,2},h_{n(k)} \vartheta_{k,3}) \to 0$, $\int_{\omega} x_3 \vartheta_{k,2} \to 0$ strongly in $L^2$,
\item
 $(|\sym \nabla_{h_{n(k)}} \vartheta_k|^2 )_{k \in \N}$ is equi-integrable,
 $$ \sym \nabla_{h_{n(k)}} \vartheta_k=\sym \iota((A_k)' d_{\omega})+\sym \nabla_{h_{n(k)}} v_{k}. $$
 Here  $(A_k)_{k \in \N} \subset  W^{1,2}([0,L];\mathbb{M}^3_{\skw})$, $A_k \to 0$ strongly in $L^2$, $(v_k)_{k \in \N} \subset W^{1,2}(\Omega;\R^3)$, $v_k \to 0$ strongly in $L^{2}$. Moreover we have
 where $\left(|(A_k)'|^2\right)_{k\in\N}$ and $\left(| \nabla_{h_{n(k)}} v_{k}|^2\right)_{k \in \N}$ are equi-integrable.
 Also the following is valid
 $$\limsup_{k \to \infty} \left( \|A_k\|_{W^{1,2}(O)}+\|\nabla_{h_{n(k)}}v_k \|_{L^2(O \times \omega)}\right)\leq C \left( \eta_2 \|m\|^2_{L^2}+1 \right),$$
 where $C$ is independent of the domain $O$ and for each $k \in \N$ we have  $A_k=0$ in a neighborhood of $\partial O$ and $v_k=0$ in a neighborhood of $\partial O \times \omega$.
\item $$K(m,O)=\lim_{k\to \infty} \int_{O \times \omega} Q^{h_{n(k)}}(x,\iota(m)+\nabla_{h_{n(k)}} \vartheta_k) \ud x.$$

\end{enumerate}
\end{lemma}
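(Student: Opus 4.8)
The plan is to combine three ingredients: the definition of $K(m,O)$ via the recovery sequences in \eqref{gli}--\eqref{gls} under Assumption \ref{ass:main}, the structural decomposition from Corollary \ref{cormaroje}, and a de la Vall\'ee--Poussin / Decomposition-Lemma type equi-integrability improvement. First I would use Remark \ref{minat2} (together with the remark extending Assumption \ref{ass:main} to arbitrary open $O\subset[0,L]$) to pick, for the fixed $m$ and $O$, a sequence $(\psi^{h_n})_{n}$ attaining the infimum in \eqref{gli}, so that $(\psi_1^{h_n},h_n\psi_2^{h_n},h_n\psi_3^{h_n})\to 0$ and $\int_\omega x_3\psi_2^{h_n}\to 0$ strongly in $L^2$, and $\int_{O\times\omega}Q^{h_n}(x,\iota(m)+\nabla_{h_n}\psi^{h_n})\to K(m,O)$. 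By (Q1) the energy bound forces $\|\sym\nabla_{h_n}\psi^{h_n}\|_{L^2(O\times\omega)}$ to be bounded, so Corollary \ref{cormaroje} applies and yields $A_n\in W^{1,2}([0,L];\M^3_{\skw})$ with $A_n\to 0$ in $L^2$ and $v_n\in W^{1,2}(\Omega;\R^3)$ with $v_n\to 0$ in $L^2$ such that $\sym\nabla_{h_n}\psi^{h_n}=\sym\iota((A_n)'d_\omega)+\sym\nabla_{h_n}v_n+O(h_n)$, with the uniform bound \eqref{ocjenaahvh}. Absorbing the $O(h_n)$ term into $v_n$ (modifying it by a term of the type appearing in the proof of Corollary \ref{cormaroje}, which is still $L^2$-small) one gets an exact decomposition at the cost of negligibly perturbing the energy, using \eqref{ocjenakv1}.

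The second step is the equi-integrability improvement. Here I would invoke the decomposition lemma for scaled gradients (the Fonseca--M\"uller / Bocea--Fonseca type truncation argument adapted to $\nabla_{h_n}$, exactly as it is used for this purpose in \cite{Velcic-13,Neukamm-11}): given the bounded sequences $((A_n)')$ in $L^2(O)$ and $(\nabla_{h_n}v_n)$ in $L^2(O\times\omega)$, one produces modified sequences — call them again $(A_k)$ and $(v_k)$ along a subsequence $h_{n(k)}$ — which agree with the originals outside a set of vanishing measure, for which $(|(A_k)'|^2)_k$ and $(|\nabla_{h_{n(k)}}v_k|^2)_k$ are equi-integrable, which still converge to $0$ in $L^2$, and which can be cut off to vanish near $\partial O$ (resp. $\partial O\times\omega$) without destroying equi-integrability, since the cut-off only acts where the functions are already small. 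Setting $\vartheta_k := (A_{k,12}x_2+A_{k,13}x_3,\ \tfrac1{h_{n(k)}}\int_0^{x_1}A_{k,21}+A_{k,23}x_3,\ \tfrac1{h_{n(k)}}\int_0^{x_1}A_{k,31}+A_{k,32}x_2)^t+v_k$ as in Lemma \ref{peter1} gives a field with $\sym\nabla_{h_{n(k)}}\vartheta_k=\sym\iota((A_k)'d_\omega)+\sym\nabla_{h_{n(k)}}v_k$, hence $(|\sym\nabla_{h_{n(k)}}\vartheta_k|^2)$ equi-integrable, and Lemma \ref{peter1} also delivers property (a), namely $(\vartheta_{k,1},h_{n(k)}\vartheta_{k,2},h_{n(k)}\vartheta_{k,3})\to0$ and $\int_\omega x_3\vartheta_{k,2}\to0$, $\int_\omega x_2\vartheta_{k,3}\to 0$ in $L^2$.

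The third step is to check that the modification has not changed the value of the limit energy, i.e. property (c). Since $\vartheta_k$ differs from $\psi^{h_{n(k)}}$ only on a set $E_k$ with $|E_k|\to 0$, and since on $E_k$ the competitor $\iota(m)+\nabla_{h_{n(k)}}\vartheta_k$ has $L^2$-norm that is uniformly small in the sense of equi-integrability while $Q^{h_n}$ is controlled by $\eta_2|\sym\cdot|^2$ via (Q1), the energy contribution from $E_k$ tends to $0$; combined with $\iota(m)+\nabla_{h_{n(k)}}\psi^{h_{n(k)}}$ being equi-integrable off $E_k$ by construction, one gets $\int_{O\times\omega}Q^{h_{n(k)}}(x,\iota(m)+\nabla_{h_{n(k)}}\vartheta_k)\to K(m,O)$, using also that $\vartheta_k$ is still an admissible competitor so the limit is $\geq K(m,O)=K^{+}(m,O)$, while the $\limsup$ is $\leq K(m,O)$. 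The uniform bound $\limsup_k(\|A_k\|_{W^{1,2}(O)}+\|\nabla_{h_{n(k)}}v_k\|_{L^2(O\times\omega)})\leq C(\eta_2\|m\|_{L^2}^2+1)$ with $C$ independent of $O$ comes from \eqref{ocjenaahvh} together with the lower bound $\eta_1|\sym\nabla_{h_n}\psi^{h_n}|^2\leq Q^{h_n}$ and the near-minimality of $\psi^{h_n}$, exactly as in \cite[Lemma 3.10]{Velcic-13}. The main obstacle is the second step: executing the $\nabla_{h_n}$-scaled decomposition/truncation so that it simultaneously (i) preserves the split into the $\iota((A_k)'d_\omega)$ part and the $\nabla_{h_{n(k)}}v_k$ part, (ii) yields equi-integrability of \emph{both} pieces separately, and (iii) is compatible with the boundary cut-off needed later for gluing — this is the technical heart and is why the authors refer to the parallel argument in \cite{Velcic-13} rather than reproving it.
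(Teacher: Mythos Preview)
The paper does not actually prove this lemma; it states that the result is analogous to \cite[Lemma 3.10]{Velcic-13} and writes ``We shall not prove it here.'' Your plan is precisely the argument one expects behind that reference: start from an optimal sequence via Remark~\ref{minat2} and Assumption~\ref{ass:main}, decompose with Corollary~\ref{cormaroje}, apply the equi-integrability improvement (Lemmas~\ref{prvavazna} and~\ref{drugavazna} in this paper), cut off near $\partial O$, and rebuild $\vartheta_k$ through Lemma~\ref{peter1}, checking via (Q1) and \eqref{ocjenakv1} that the energy limit is unchanged. So your proposal is correct and matches the intended route; you have also correctly located the only genuinely delicate point, namely performing the truncation so that the $\iota((A_k)'d_\omega)$ and $\nabla_{h_{n(k)}}v_k$ pieces remain separately equi-integrable and compatible with the boundary cut-off.
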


Let $\mathcal{L}^h:\Omega \times \mathbb{M}^3 \to \mathbb{M}^3_{\sym}$ be a
measurable mapping such that for every $x \in \Omega$, $\mathcal{L}(x, \cdot)$
is a unique positive semidefinite linear operator such that
$Q^h (x,M)=\mathcal{L}^h(x, M) \cdot M$, for all $M \in \mathbb{M}^3$.

Notice that
\begin{equation}\label{svojstvoL}
\mathcal{L}^h (x, M)= \mathcal{L}^h (x, \sym M), \quad \| \mathcal{L}^h\|_{L^\infty} \leq \eta_2.
\end{equation}
\begin{corollary}\label{pommmm}
Take $m \in L^2(\Omega;\R^{3})$ and a sequence $(h_n)_{n \in \N}$ monotonly decreasing to $0$ that satisfy Assumption \ref{ass:main}
and for which there exists $(\vartheta_n)_{n \in \N} \subset W^{1,2}(\Omega,\R^3)$ such that
\begin{enumerate}[(a)]
\item
$(\vartheta_{n,1},h_{n}  \vartheta_{n,2},h_{n} \vartheta_{n,3}) \to 0$, $\int_{\omega} x_3 \vartheta_{n,2} \to 0$ strongly in $L^2$,
\item $\displaystyle K(m,[0,L])=\lim_{n\to \infty} \int_{\Omega} Q^{h_{n}}(x,\iota(m)+\nabla_{h_{n}} \vartheta_n) \ud x.$
\end{enumerate}
Then we have that:
\begin{enumerate}[(I)]
\item $(|\sym \nabla_{h_n} \vartheta_n|^2)_{n\in \N}$ is equi-integrable;
\item for every $O$  open subset of $[0,L]$ we have that
\begin{equation} \label{jednakost1}
K(m,O)=\lim_{n\to \infty} \int_{O \times \omega} Q^{h_{n}}(x,\iota(m)+\nabla_{h_{n}} \vartheta_n) \ud x;
\end{equation}
\item If $(\psi_n)_{n\in \N} \subset W^{1,2}(\Omega;\R^3)$ is any other sequence that satisfies (a) and (b) then
$$\|\sym \nabla_{h_n} \psi_n - \sym \nabla_{h_n} \vartheta_n\|_{L^2} \to 0. $$
  and $(|\sym \nabla_{h_n} \psi_n|^2)_{n\in \N}$ is equi-integrable.
\end{enumerate}
\end{corollary}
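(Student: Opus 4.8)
The plan is to reduce everything to a "smart" recovery sequence provided by Lemma \ref{lem:improveglavnazamjena} and then to transfer its good properties (equi-integrability, localization) to the given sequence $(\vartheta_n)_{n\in\N}$ by a convexity/almost-minimality argument. First I would apply Lemma \ref{lem:improveglavnazamjena} with $O=(0,L)$ to produce, after passing to a further subsequence, a sequence $(\tilde\vartheta_k)_{k\in\N}$ satisfying (a), with $(|\sym\nabla_{h_{n(k)}}\tilde\vartheta_k|^2)_k$ equi-integrable, admitting the split $\sym\nabla_{h_{n(k)}}\tilde\vartheta_k=\sym\iota((\tilde A_k)'d_\omega)+\sym\nabla_{h_{n(k)}}\tilde v_k$ with all the stated bounds, and realizing $K(m,[0,L])$ in the limit. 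Since $(\vartheta_n)$ also realizes $K(m,[0,L])$, the two sequences have asymptotically equal energy on the whole domain. The key analytic tool is the quadratic (hence uniformly convex on symmetric matrices, by (Q1)) structure of $Q^{h_n}$: for $t\in[0,1]$ the midpoint-type inequality
\begin{equation*}
Q^{h}(x,\tfrac{F+G}{2})\leq\tfrac12 Q^{h}(x,F)+\tfrac12 Q^{h}(x,G)-\tfrac{\eta_1}{4}|\sym F-\sym G|^2
\end{equation*}
together with the fact that the half-sum $\tfrac12(\vartheta_n+\tilde\vartheta_{n})$ is again an admissible competitor for $K(m,[0,L])$ (properties (a) are preserved under convex combinations, and $\iota(m)+\nabla_{h_n}\tfrac{\vartheta_n+\tilde\vartheta_n}{2}=\tfrac12(\iota(m)+\nabla_{h_n}\vartheta_n)+\tfrac12(\iota(m)+\nabla_{h_n}\tilde\vartheta_n)$). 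Comparing energies and using that both sequences are minimizing forces $\int_\Omega|\sym\nabla_{h_n}\vartheta_n-\sym\nabla_{h_n}\tilde\vartheta_n|^2\to0$. This immediately gives (I), since equi-integrability of $(|\sym\nabla_{h_n}\tilde\vartheta_n|^2)_n$ transfers across an $L^2$-convergent difference, and it gives the first half of (III) when applied to an arbitrary competing sequence $(\psi_n)$ in place of $(\vartheta_n)$; equi-integrability of $(|\sym\nabla_{h_n}\psi_n|^2)_n$ then follows the same way.

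For (II) I would argue by the standard "subadditivity plus superadditivity forces additivity" dichotomy on the set function $O\mapsto K(m,O)$. Using Remark after Assumption \ref{ass:main}, $K(m,\cdot)$ is defined on all open subsets; one shows $K(m,\cdot)$ is (inner regular and) additive on disjoint open sets — superadditivity is trivial from the definition by restricting competitors, and subadditivity uses a cut-off gluing of the local recovery sequences from Lemma \ref{lem:improveglavnazamjena} with the boundary-vanishing property there (the term produced by differentiating the cut-off is controlled by the equi-integrable bound on $\nabla_{h_n}\tilde v_k$ on a thin transition layer, which can be made arbitrarily small). Once $K(m,\cdot)$ is a measure, the inequality $\liminf_n\int_{O\times\omega}Q^{h_n}(x,\iota(m)+\nabla_{h_n}\vartheta_n)\ge K(m,O)$ holds by definition of $K^-$, the complementary inequality on $[0,L]\setminus\bar O$ holds likewise, and adding the two and comparing with the equality on all of $[0,L]$ (hypothesis (b)) shows both inequalities must be equalities; this is exactly \eqref{jednakost1}. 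The equi-integrability from (I) is what guarantees there is no energy concentrating on $\partial O$, so the splitting of the integral is legitimate.

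The main obstacle is the convexity/almost-minimality step: one must be careful that the half-sum genuinely satisfies constraint (a) (it does, termwise, since both $\int_\omega x_3\vartheta_{n,2}\to0$ and $\int_\omega x_3\tilde\vartheta_{n,2}\to0$) and that passing to the common subsequence $(h_{n(k)})$ from Lemma \ref{lem:improveglavnazamjena} does not destroy hypothesis (b) for $(\vartheta_n)$ — here one uses that (b) is a statement about a limit, hence stable under subsequences, and that $(|\sym\nabla_{h_n}\vartheta_n|^2)_n$ having an $L^2$-convergent difference with an equi-integrable sequence is itself equi-integrable along the full sequence, not just the subsequence, because the estimate $\|\sym\nabla_{h_n}\vartheta_n-\sym\nabla_{h_n}\tilde\vartheta_n\|_{L^2}\to0$ can be re-derived along any subsequence of the original $(h_n)$ by re-extracting. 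I expect the remaining bookkeeping (preservation of the affine constraint under convex combinations, the cut-off estimates for additivity) to be routine given Lemma \ref{lem:improveglavnazamjena} and Lemma \ref{lem:ocjena}.
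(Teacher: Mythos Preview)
Your proposal is correct and reaches all three conclusions, but the route differs from the paper's in an interesting way.

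For (I) and (III) the paper proceeds in two separate steps. Equi-integrability (I) is proved by contradiction: one passes through Corollary \ref{cormaroje} to decompose $\vartheta_n$, then invokes the decomposition lemmas \ref{prvavazna}--\ref{drugavazna} to replace the pieces by equi-integrable ones, and shows that if $(|\sym\nabla_{h_n}\vartheta_n|^2)$ concentrated on small sets one could drop that part and strictly decrease the energy while still realizing $K(m,[0,L])$. Uniqueness (III) is then obtained from a first-variation identity: one shows that any minimizing $\vartheta_n$ satisfies
\[
\lim_{n\to\infty}\int_\Omega \mathcal L^{h_n}\bigl(x,\iota(m)+\nabla_{h_n}\vartheta_n\bigr)\cdot\nabla_{h_n}\tilde\psi_n=0
\]
for all admissible test sequences $\tilde\psi_n$, and then subtracts two such relations. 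Your argument replaces both steps by a single application of the parallelogram identity for the quadratic forms $Q^{h_n}$ (the inequality you wrote is in fact an equality), comparing $\vartheta_n$ with the equi-integrable recovery sequence $\tilde\vartheta_k$ supplied by Lemma \ref{lem:improveglavnazamjena}; this is shorter and more self-contained, and your handling of the subsequence issue (re-extract along any bad subsequence) is the right way to upgrade to the full sequence. What the paper's route buys is the explicit Euler--Lagrange relation \eqref{operationalform}, which is of independent interest and is not produced by the midpoint argument.

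For (II) the two arguments are closer: the paper argues by contradiction, gluing a strictly better competitor on $O$ with an optimal one on the complement via the boundary-vanishing in Lemma \ref{lem:improveglavnazamjena} and Lemma \ref{peter1}; you instead first establish additivity of $O\mapsto K(m,O)$ on disjoint open sets and then split the global equality (b). The gluing step is the same in both, and your use of equi-integrability to rule out boundary concentration is exactly what the paper uses when it passes from sets in $\mathcal D$ to arbitrary open $O$ by exhaustion.
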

\begin{proof}
From (Q1) and by taking the zero subsequence we obtain the bound
\begin{equation} \label{bound}
\limsup_{n \to \infty} \|\sym \nabla_{h_{n}}\vartheta_n \|_{L^2(\Omega)}\leq C \left( \eta_2 \|m\|^2_{L^2}+1 \right).
\end{equation}
From Corollary \ref{cormaroje} there are  sequences $(A_n)_{n \in \N} \subset  W^{1,2}((0,L);\mathbb{M}^3_{\skw})$ and $(v_n)_{n \in \N} \subset W^{1,2}(\Omega;\R^3)$ such that $A_n \to 0$ and  $v_n \to 0$ strongly in $L^{2}$ and
$$\left\|\sym \nabla_{h_{n}} \vartheta_n-\sym \iota((A_n)' d_{\omega})-\sym \nabla_{h_{n}} v_{n}\right\|_{L^2} \to 0.$$
From (\ref{ocjenaahvh}) we  obtain that
 $$\limsup_{k \to \infty} \left( \|A_n\|_{W^{1,2}(\Omega)}+\|\nabla_{h_{n}}v_n \|_{L^2(\Omega)}\right)\leq C \left( \eta_2 \|m\|^2_{L^2}+1 \right).$$

To prove that  $(|\sym \nabla_{h_{n}} \vartheta_n|^2 )_{n \in \N}$ is equi-integrable, let us assume the opposite, i.e., that there is $\eps>0$ such that for every $k>0$ there is a measurable set $S_k$ such that $\vert S_k\vert < \frac{1}{k} $ and there is a $n(k)>n(k-1)$ such that
\begin{equation*}
\int_{S_k} | \sym \nabla_{h_{n(k)}} \vartheta_{n(k)}|^2 \ud x \geq \eps.
\end{equation*}

On the other hand, by Lemma \ref{prvavazna} and \ref{drugavazna}  there is a subsequence, still denoted by $n(k)$ and sequences $(\tilde{A}_{k})_{k \in \N} \subset W^{1,2}((0,L);\mathbb{M}^3_{\skw})$ and $(\tilde{v}_{k})_{k \in \N} \subset W^{1,2}(\Omega;\R^3)$ such that:
\begin{enumerate}
\item[(i)] $\displaystyle \lim_{k \to \infty } \left\vert \Omega \cap \left\{ \tilde{A}_{k} \neq A_{n(k)} \mbox{ or }    \tilde{A}_{k}' \neq A_{n(k)}' \right\}\right\vert =0,$
\item[(ii)] $ \lim_{k \to \infty } \left\vert \Omega \cap \left\{ \tilde{v}_{k} \neq v_{n(k)} \mbox{ or }    \nabla \tilde{v}_{k} \neq \nabla v_{n(k)}  \right\}\right\vert =0,$
\item[(iii)] $\tilde{A}_{k}'$ and $\nabla_{h_{n(k)}}\tilde{v}_{k}$ are equi-integrabile.
\end{enumerate}

Now since
\begin{align*}
K(m,[0,L])&=\liminf_{k\to\infty} \int_{\Omega} Q^{h_{n(k)}}\left(x,\iota(m) + \nabla_{h_{n(k)}}\vartheta_{n(k)}\right) \ud x \\
&> \liminf_{k\to\infty} \int_{\Omega} \chi_{\Omega\setminus S_k} Q^{h_{n(k)}}\left(x,\iota(m) + \nabla_{h_{n(k)}}\vartheta_{n_k}\right) \ud x   \\
&=  \liminf_{k\to\infty} \int_{\Omega}  \chi_{\Omega\setminus S_{k}} Q^{h_{n(k)}}\left(x,\iota(m) + \sym\iota((\tilde{A}_k)'d_{\omega}) + \sym{\nabla_{h_{n(k)}} \tilde{v}_k}\right) \ud x  \\
&= \liminf_{k\to\infty} \int_{\Omega}  Q^{h_{n(k)}} \left(x,\iota(m) + \sym \iota((\tilde{A}_k)'d_{\omega}) + \sym{\nabla_{h_{n(k)}} \tilde{v}_k}\right) \ud x \\
&=K(m,[0,L]),
\end{align*}
which gives a contradiction. Therefore, $(|\sym \nabla_{h_{n}} \vartheta_n|^2 )_{n \in \N}$ is equi-integrabile.

 We will show that $\vartheta_n$ is optimal on any open set $O \in \mathcal{D}$ which is a finite union of disjoint open intervals.
 If that was wrong then there would exist a subsequence, still denoted by $(h_n)_{n \in \N}$
  such that  there is a sequence  $(\psi_n^1)_{n \in \N} \subset W^{1,2}(O\times \omega,\R^3)$  satisfying the conditions of Lemma \ref{lem:improveglavnazamjena}  and
\begin{equation*}
K(m,O)=\lim_{n \to \infty} \int\limits_{O \times \omega} Q^{h_{n}}\left(x,\iota(m)+\nabla_{h_{n}}\psi_n^1\right) \ud x < \lim_{n \to \infty} \int\limits_{O\times \omega} Q^{h_{n}}\left(x,\iota(m)+\nabla_{h_{n}}\vartheta_n \right) \ud x.
\end{equation*}
On the other hand,
on the further subsequence, still denoted by $(h_n)_{n \in \N}$
we take the sequence  $\psi_n^2 \subset W^{1,2}([0,L]\setminus\bar{O},\R^3)$
satisfying the conditions of Lemma \ref{lem:improveglavnazamjena}  and
\begin{eqnarray*}
K(m,(0,L) \backslash \bar{O} )&=&\lim_{n \to \infty} \int\limits_{((0,L)\setminus \bar{O}) \times \omega} Q^{h_n}\left(x,\iota(m)+\nabla_{h_n}\psi_n^2\right) \ud x\\ &\leq& \lim_{n \to \infty} \int\limits_{([0,L]\setminus \bar{O})\times \omega} Q^{h_n}\left(x,\iota(m)+\nabla_{h_{n}}\vartheta_n^2\right) \ud x.
\end{eqnarray*}
By using Lemma \ref{peter1} we define $(\psi_n)_{n \in \N} \subset W^{1,2}(\Omega;\R^3)$
such that
$$ \sym \nabla_{h_n} \psi_n=\chi_{O} \sym \nabla_{h_n} \psi_n^1+\chi_{[0,L] \backslash \bar{O}} \sym \nabla_{h_n} \psi_n^2. $$
We conclude that
\begin{align*} \lim_{n \to \infty}\int\limits_{[0,L]\times \omega} Q^{h_n}\left(x,\iota(m)+\nabla_{h_n}\psi_n\right) \ud x &< \lim_{n \to \infty}\int\limits_{[0,L]\times \omega} Q^{h_n}\left(x,\iota(m)+\nabla_{h_n}\vartheta_n\right) \ud x\\ &=K(m,[0,L]),
\end{align*}
which yields a contradiction with the optimality of the sequence $(\vartheta_n)_{n \in \N}$.

Now for any open $O\subset [0,L]$, by density, there is an increasing family of sets $(D_k)_{k \in \N} \subset \mathcal{D}$ which exhausts $O$. Since $(\vartheta_n)_{n \in \N}$ is optimal on each $D_k$ and since $K(m,O) \geq K(m,D_k)$ (this can be easily seen from Lemma \ref{lem:improveglavnazamjena}) we deduce from equi-integrability of $\left(|\sym{\nabla_{h_n}\vartheta_{n}}|^2\right)_{n \in \N}$ that
\begin{align*}
K(m,O)\geq \lim_{k \to \infty} K(m,D_k)= \lim_{n \to \infty} \int_{O \times \omega} Q^{h_n}\left(x,\iota(m)+\nabla_{h_n}\vartheta_n\right)  \ud x.
\end{align*}
that $\vartheta_k$ is also optimal for $K(m,O)$ and (II) is proved.

To prove (III)  we first note that
 \begin{equation} \label{operationalform}
 \lim_{n\to \infty}\int_{\Omega} \mathcal{L}^h (x, \iota(m)+\nabla_{h_n} \vartheta_n) \cdot \nabla_{h_n} \tilde{\psi}_n =0,
 \end{equation}
for every $(\tilde{\psi_n})_{n \in \N} \subset W^{1,2}(\Omega;\R^3)$ that satisfies (a) and such that $\vert \sym{\nabla_{h_n} \tilde{\psi}_n}\vert$ is bounded in $L^2$.

To prove this we take $\eps > 0$  and for $k$ large enough we derive:
\begin{align*}
 0 &\leq  \int_{\Omega} Q^{h_n} (x,\iota(m)+\nabla_{h_n} \vartheta_n + \varepsilon \nabla_{h_n} \tilde{\psi}_n ) \ud x - \int_{\Omega} Q^{h_n} (x,\iota(m)+\nabla_{h_n} \vartheta_n) \ud x\\
 &= \int_{\Omega} \mathcal{L}^h (x, \iota(m)+\nabla_{h_n} \vartheta_n+ \varepsilon \nabla_{h_n} \tilde{\psi}_n) \cdot (\iota(m)+\nabla_{h_n} \vartheta_n+ \varepsilon \nabla_{h_n} \tilde{\psi}_n) \ud x \\
 &- \int_{\Omega} \mathcal{L}^h (x, \iota(m)+\nabla_{h_n} \vartheta_n) \cdot (\iota(m)+\nabla_{h_n} \vartheta_n ) \ud x \\
 &= 2\varepsilon \int_{\Omega} \mathcal{L}^h (x, \iota(m)+\nabla_{h_n} \vartheta_n) \cdot ( \nabla_{h_n} \tilde{\psi}_n) \ud x  +\eps^2 \int_{\Omega} \mathcal{L}^h (x,  \nabla_{h_n} \tilde{\psi}_n) \cdot ( \nabla_{h_n} \tilde{\psi}_n) \ud x \\
  &\leq 2\varepsilon \int_{\Omega} \mathcal{L}^h (x, \iota(m)+\nabla_{h_n} \vartheta_n) \cdot ( \nabla_{h_n} \tilde{\psi}_n) \ud x  +\eps^2 \eta_2 \vert \sym{\nabla_h \tilde{\psi}_n}\vert^2\\
  &= 2\varepsilon \int_{\Omega} \mathcal{L}^h (x, \iota(m)+\sym\nabla_{h_n} \vartheta_n) \cdot ( \sym\nabla_{h_n} \tilde{\psi}_n) \ud x  +\eps^2 \eta_2 \vert \sym{\nabla_h \tilde{\psi}_n}\vert^2.
\end{align*}
If  (\refeq{operationalform}) didn't hold we would choose  $\eps$  (by taking the appropriate sign) such that the linear term dominates and the inequality is violated. Thus, we deduce
(\refeq{operationalform}), by the contradiction.
 To prove the last claim we take two sequences  $(\vartheta_n)_{n\in \N} \subset W^{1,2}(\Omega;\R^3)$, $(\psi_n)_{n\in \N} \subset W^{1,2}(\Omega;\R^3)$ that satisfy (a) and (b). Now we have, using \eqref{operationalform}
\begin{eqnarray*}
\eta_1\|\sym \nabla_{h_n} (\psi_n- \vartheta_n)\|^2_{L^2} &\leq& \int_{\Omega} \mathcal{L}^{h_n}(x, \nabla_{h_n} (\psi_n- \vartheta_n)) \cdot \nabla_{h_n} (\psi_n- \vartheta_n) \ud x \\
&=& \int_{\Omega} \mathcal{L}^{h_n}(x,\iota(m)+ \nabla_{h_n} \psi_n) \cdot \nabla_{h_n} (\psi_n-\vartheta_n) \ud x\\ & &-\int_{\Omega} \mathcal{L}^{h_n}(x,\iota(m)+ \nabla_{h_n} \vartheta_n) \cdot \nabla_{h_n} (\psi_n-\vartheta_n) \ud x\to 0.
\end{eqnarray*}

\end{proof}
The following lemma proves the compactness result we need.
\begin{lemma}
For every sequence $(h_n)_{n \in \N}$ that satisfy the Assumption \ref{ass:main} there exists a subsequence, still denoted by $(h_n)_{n \in \N}$ such that
for each $m \in L^2(\Omega;\R^3)$ there exists $\left(\vartheta_n(m)\right)_{n \in \N} \subset W^{1,2}(\Omega;\R^3)$ which satisfies
\begin{enumerate}[(a)]
\item
$\left(\vartheta_{n,1}(m),h_{n}  \vartheta_{n,2}(m),h_{n} \vartheta_{n,3}(m)\right) \to 0$, $\int_{\omega}x_3\vartheta_{n,2}(m) \to 0$ strongly in $L^2$,
\item $$K(m,[0,L])=\lim_{n\to \infty} \int_{\Omega} Q^{h_{n}}(x,\iota(m)+\nabla_{h_{n}} \vartheta_n(m))\, \ud x.$$
\end{enumerate}
\end{lemma}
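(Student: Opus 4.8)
The statement is a ``selection of recovery sequences'' result: for a fixed sequence $(h_n)$ satisfying Assumption~\ref{ass:main}, we must extract a further subsequence along which every $m \in L^2(\Omega;\R^3)$ simultaneously admits an optimal sequence $\vartheta_n(m)$ realizing $K(m,[0,L])$. The plan is to combine Lemma~\ref{lem:improveglavnazamjena} (which gives, for a \emph{single} $m$, an optimal equi-integrable sequence on a subsequence) with a countable diagonalization over a dense set of $m$'s, and then use the Lipschitz estimate of Lemma~\ref{lem:ocjena} together with the rigidity/stability content of Corollary~\ref{pommmm} to pass from the dense set to all of $L^2$.

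First I would fix a countable dense set $\{m_j\}_{j \in \N} \subset L^2(\Omega;\R^3)$. Applying Lemma~\ref{lem:improveglavnazamjena} with $O=(0,L)$ to $m_1$ yields a subsequence of $(h_n)$ and an optimal sequence for $m_1$; then applying it to $m_2$ along that subsequence, and so on, and taking the diagonal subsequence, we obtain a single subsequence (still denoted $(h_n)$) along which, for every $j$, there is $(\vartheta_n(m_j))_{n}$ satisfying (a) and realizing $K(m_j,[0,L])$ as a limit, with the uniform bound $\limsup_n \|\sym \nabla_{h_n}\vartheta_n(m_j)\|_{L^2} \le C(\eta_2\|m_j\|_{L^2}^2 + 1)$ coming from (Q1). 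Note that after this step one may also invoke Lemma~\ref{podniz} to retain, on the same subsequence, the coincidence $K^+ = K^- = K$ on the dense family $\mathcal{D}$ — though Assumption~\ref{ass:main} already guarantees this.

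Next, for arbitrary $m \in L^2(\Omega;\R^3)$ I would pick $m_{j_k} \to m$ in $L^2$. For each fixed $k$ the sequence $(\vartheta_n(m_{j_k}))_n$ is optimal for $m_{j_k}$; I want to produce an optimal sequence for $m$ by a diagonal extraction in $k$ and $n$. The key quantitative input is Lemma~\ref{lem:ocjena}: $|K(m,[0,L]) - K(m_{j_k},[0,L])| \le C\|m - m_{j_k}\|_{L^2}(\|m\|_{L^2} + \|m_{j_k}\|_{L^2}) \to 0$, so the target energies converge. Moreover, writing $Q^{h_n}(x,\iota(m) + \nabla_{h_n}\vartheta_n(m_{j_k}))$ and using the quadratic estimate \eqref{ocjenakv1} (Remark~\ref{ocjenkv}) together with the uniform $L^2$-bound on $\sym\nabla_{h_n}\vartheta_n(m_{j_k})$, one controls $|\int Q^{h_n}(x,\iota(m)+\nabla_{h_n}\vartheta_n(m_{j_k})) - \int Q^{h_n}(x,\iota(m_{j_k})+\nabla_{h_n}\vartheta_n(m_{j_k}))|$ by $C\|m - m_{j_k}\|_{L^2}(\|m\|_{L^2}+\|m_{j_k}\|_{L^2}+1)$, uniformly in $n$. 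Hence $\limsup_n \int_\Omega Q^{h_n}(x,\iota(m)+\nabla_{h_n}\vartheta_n(m_{j_k}))\,\ud x \le K(m_{j_k},[0,L]) + \varepsilon_k$ with $\varepsilon_k \to 0$; choosing for each $k$ an index $n(k)$ large enough and setting $\vartheta_n(m) := \vartheta_n(m_{j_{k(n)}})$ for an appropriate staircase function $k(n) \to \infty$, standard diagonalization gives a sequence satisfying (a) (the constraints in (a) are stable under this extraction since each $\vartheta_n(m_{j_k})$ satisfies them) and with $\limsup_n \int_\Omega Q^{h_n}(x,\iota(m)+\nabla_{h_n}\vartheta_n(m)) \le K(m,[0,L])$. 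The reverse inequality $\liminf_n \int_\Omega Q^{h_n}(x,\iota(m)+\nabla_{h_n}\vartheta_n(m)) \ge K(m,[0,L])$ is automatic from the definition of $K^- = K$, since the $\vartheta_n(m)$ are admissible competitors in \eqref{gli}. This proves (b).

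The main obstacle is the bookkeeping in the double limit: one must choose the rate $k(n)\to\infty$ slowly enough that, for the particular approximants $\vartheta_n(m_{j_k})$ produced by Lemma~\ref{lem:improveglavnazamjena}, the convergence $\int Q^{h_n}(x,\iota(m_{j_k})+\nabla_{h_n}\vartheta_n(m_{j_k})) \to K(m_{j_k},[0,L])$ has effectively ``kicked in'' before $k$ is incremented, while simultaneously the error terms controlled by \eqref{ocjenakv1} and Lemma~\ref{lem:ocjena} are small — all of this uniformly enough to close the estimate. The uniform bound $C(\eta_2\|m_{j_k}\|_{L^2}^2+1)$ on $\|\sym\nabla_{h_n}\vartheta_n(m_{j_k})\|_{L^2}$, which stays bounded because $m_{j_k}\to m$ in $L^2$, is what makes the error terms from \eqref{ocjenakv1} uniformly controllable; without it the diagonal argument would not close. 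Everything else is routine once this is set up correctly.
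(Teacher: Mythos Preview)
Your proposal is correct and follows essentially the same route as the paper: fix a countable dense set in $L^2(\Omega;\R^3)$, diagonalize to obtain a single subsequence carrying optimal sequences for every element of the dense set, then for a general $m$ build $\vartheta_n(m)$ by a staircase selection $\vartheta_n(m):=\vartheta_n(m_{j_{k(n)}})$ and close the estimate via the Lipschitz bound of Lemma~\ref{lem:ocjena}. In fact you are more explicit than the paper, which simply writes ``use Lemma~\ref{lem:ocjena} to show (b)'' where you correctly spell out that one also needs the pointwise quadratic estimate \eqref{ocjenakv1} together with the uniform bound on $\|\sym\nabla_{h_n}\vartheta_n(m_{j_k})\|_{L^2}$ to compare $\int Q^{h_n}(x,\iota(m)+\cdots)$ with $\int Q^{h_n}(x,\iota(m_{j_k})+\cdots)$; the mention of Corollary~\ref{pommmm} in your plan is superfluous and indeed you never use it.
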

\begin{proof}
Let $\mathcal{M} \subset L^2(\Omega;\R^3)$ be a countable dense family. By diagonalization procedure it is possible to construct the subsequence, still denoted by $(h_n)_{n \in \N}$ such that for each $m \in  \mathcal{M}$ there is a sequence $\vartheta(m)_n$ for which (a) and (b) holds. Now we take the sequence
$(m_{n})_{n \in \N} \subset \mathcal{M}$ such that $m_n  \to m$ in $L^2$ as $n \to \infty$ and define the  strictly increasing function $k:\N \to \N$ which satisfies for every $n_0 \in \N$
\begin{eqnarray*}
\left\vert K(m_{n_0},[0,L]) - \int_{\Omega} Q^{h_n}\left(x,\iota(m_{n_0}) + \nabla_{h_n} \vartheta_{n}(m_{n_0}) \right) \ud x \right\vert &<& \frac{1}{n_0},\\
\left\| \left(\vartheta_{n,1}(m_{n_0}),h_{n}  \vartheta_{n,2}(m_{n_0}),h_{n} \vartheta_{n,3}(m_{n_0})\right) \right\|_{L^2} &<& \frac{1}{n_0}, \\
\left\|\int_{\omega}x_3\vartheta_{n,2}(m_{n_0})\right\|_{L^2} &<& \frac{1}{n_0}, \quad
 \mbox{ for every }  n \geq k(n_0).
\end{eqnarray*}
 For every $i \in \N $ and $j \in [k(i),k(i+1))$ take  $\vartheta_j(m):= \vartheta_j(m_{k(i)})$ and  use Lemma \ref{lem:ocjena} to show (b).
\end{proof}
We are now in position to make the assumption on the family $(Q^h)_{h>0}$.
\begin{assumption}\label{konacna}
We assume that for every $m \in L^2(\Omega;\R^3)$ and every $O \subset [0,L]$ open there exists number $K(m,O)$ such that for every $(h_n)_{n \in \N}$ monotonly decreasing to zero there exists $(\vartheta_n(m))_{n \in \N} \subset W^{1,2}(\Omega;\R^3)$ such that
\begin{enumerate}[(a)]
\item
$\left(\vartheta_{n,1}(m),h_n  \vartheta_{n,2}(m),h_n \vartheta_{n,3}(m)\right) \to 0$, $\int_{\omega} x_3 \vartheta_{n,2}(m) \to 0$ strongly in $L^2$,
\item $$K(m,O)=\lim_{n \to \infty} \int_{O \times \omega} Q^{h_n}(x,\iota(m)+\nabla_{h_n} \vartheta_n(m))\ud x.$$
\end{enumerate}
Moreover, we have
\begin{eqnarray*}
& &\\ \nonumber K(m,O)&=&\min \Big\{\liminf_{n \to \infty}  \int_{O \times \omega} Q^{h_n}\left(x,\iota(m)+\nabla_{h_n} \psi^{h_n}\right) \ud x: \\ \nonumber & & \hspace{-12ex} (\psi_1^{h_n},h_n\psi_2^{h_n},h_n\psi_3^{h_n}) \to 0 \textrm{ strongly in } L^2(O \times \omega;\R^3)  \int_{\omega} x_3 \psi^{h_n}_2 \to 0 \textrm{ strongly in } L^2(O) \Big\}
\\ &=&\min \Big\{\limsup_{n \to \infty} \int_{A \times I} Q^{h_n}\left(x,\iota(m)+\nabla_{h_n} \psi^{h_n}\right) \ud x:\\ \nonumber & & \hspace{-12ex}  (\psi_1^{h_n},h_n\psi_2^{h_n},h_n \psi_3^{h_n}) \to 0 \textrm{ strongly in } L^2(O \times \omega;\R^3), \ \int_{\omega} x_3 \psi^{h_n}_2 \to 0 \textrm{ strongly in } L^2(O)  \Big\}.
\end{eqnarray*}
\end{assumption}
We define the mapping $m: L^2([0,L];\mathbb{M}^3_{\skw}) \times L^2([0,L]) \to L^2([0,L];\R^3)$ by $m(A,a)=A(0,x_2,x_3)^t+a$.
The following proposition is the analogous to \cite[Proposition 2.9]{Velcic-13}.
\begin{proposition}\label{identi}
Let Assumption \ref{konacna} be valid.
There exists a measurable function $Q:[0,L] \times \mathbb{M}^3_{\skw}\times \R \to \R$ such that for every
$O \subset [0,L]$ open and every $A \in L^2([0,L];\mathbb{M}^3_{\skw})$
 we have
\begin{equation} \label{integralineq}
 K(m(A,a),O)=\int_{O} Q(x_1,A(x_1),a(x_1)) \ud x_1.
\end{equation}
Moreover, $Q$ satisfies the following property
  \item[(Q'1)] for  almost all $x_1\in [0,L]$ the map $Q(x_1,\cdot,\cdot)$ is a quadratic form and there is a positive constant $C_{\omega}$ such that
\begin{equation}\label{boundQ}
      C_{\omega}(|A|^2+|a|^2) \leq Q (x_1,A,a)\leq \eta_2\left(\max\{\mu_2,\mu_3\}|A|^2+|a|^2\right) \quad \text{for all $(A,a) \in \mathbb{M}^3_{\skw} \times \R$.}
    \end{equation}
\end{proposition}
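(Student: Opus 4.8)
The plan is to establish \eqref{integralineq} by exploiting the set-function properties of $O\mapsto K(m,O)$ together with the Lipschitz-type estimate of Lemma~\ref{lem:ocjena}, and then to extract the pointwise quadratic form $Q(x_1,\cdot,\cdot)$ by a Radon--Nikodym/differentiation argument, exactly in the spirit of \cite[Proposition 2.9]{Velcic-13}. First I would fix $A\in L^2([0,L];\M^3_{\skw})$ and $a\in L^2([0,L])$ and show that $O\mapsto K(m(A,a),O)$ is (the restriction to open sets of) a finite Borel measure absolutely continuous with respect to Lebesgue measure: additivity over disjoint open sets follows from the localized optimal sequences of Corollary~\ref{pommmm}(II) (one glues optimal sequences on disjoint pieces via Lemma~\ref{peter1}, and conversely restricts an optimal sequence), while inner regularity follows from the exhaustion-by-$\mathcal D$ argument already used in Corollary~\ref{pommmm}. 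The absolute-continuity bound $K(m(A,a),O)\le \eta_2\big(\max\{\mu_2,\mu_3\}\|A\|_{L^2(O)}^2+\|a\|_{L^2(O)}^2\big)$ comes from testing with $\psi^{h_n}=0$ in \eqref{gls}, computing $\int_{O\times\omega}Q^{h_n}(x,\iota(m(A,a)))\le \eta_2\int_{O\times\omega}|\sym\iota(m(A,a))|^2$, and using $\sym\iota(A d_\omega+a)=\sym\iota(Ad_\omega)+\sym\iota(a)$ together with the normalization \eqref{normalizacija} which kills the cross terms after integration in $x_2,x_3$ and leaves $\mu_2,\mu_3$; the lower bound $C_\omega(\|A\|_{L^2(O)}^2+\|a\|_{L^2(O)}^2)\le K(m(A,a),O)$ comes from (Q1), the decomposition of any admissible $\psi^{h_n}$ via Corollary~\ref{cormaroje}(c), and a Poincar\'e/orthogonality argument showing the corrector cannot cancel the leading term $\iota(m(A,a))$ in the $L^2$ norm of the symmetrized strain (here one uses $\int_\omega x_3\psi_2^{h_n}\to0$ and the structure of $\sym\iota(A'd_\omega)$).

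Next I would upgrade to a genuine pointwise density. Since $O\mapsto K(m(A,a),O)$ extends to a measure $\ll \mathcal L^1$, write $K(m(A,a),O)=\int_O g_{A,a}(x_1)\ud x_1$ with $g_{A,a}\in L^1([0,L])$, $g_{A,a}\ge0$. To see that $g_{A,a}(x_1)$ depends on $(A,a)$ only through the values $(A(x_1),a(x_1))$ and is quadratic in them, I would first treat piecewise-constant $A,a$: by additivity it suffices to handle $A\equiv A_0$, $a\equiv a_0$ constant on an interval, and then by translation/scaling invariance of the construction the density is a constant $\widetilde Q(A_0,a_0)$ on that interval, with $\widetilde Q$ quadratic because each functional $\psi^{h_n}\mapsto \int Q^{h_n}(x,\iota(m)+\nabla_{h_n}\psi^{h_n})$ is quadratic in $m$ and the infimum of a family of quadratics sharing the same quadratic-in-$m$ structure, taken over a linear admissible class, is again quadratic in $m$ (this is the standard relaxation-of-quadratic-forms fact, already used in \cite{Velcic-13}). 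For general $A,a$ one approximates in $L^2$ by piecewise constants and passes to the limit using Lemma~\ref{lem:ocjena}, which gives uniform continuity of $m\mapsto K(m,O)$ and hence of the densities in an integrated sense; a Lebesgue-point argument then identifies $g_{A,a}(x_1)=\widetilde Q(x_1,A(x_1),a(x_1))$ for a.e.\ $x_1$, where the $x_1$-dependence of $\widetilde Q=:Q$ enters because $Q^{h_n}(x,\cdot)$ genuinely depends on $x_1$. Measurability of $Q$ in $x_1$ follows since $Q(\cdot,A_0,a_0)$ is the density of an absolutely continuous measure for each fixed $(A_0,a_0)$, and quadratic forms are determined by countably many evaluations.

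Finally, property (Q'1): quadraticity of $Q(x_1,\cdot,\cdot)$ was just established; the bounds \eqref{boundQ} are obtained by localizing the global bounds on $K(m(A,a),O)$ to small intervals around a Lebesgue point and dividing by $|O|$, using the $x_1$-independence of the constants $C_\omega$, $\eta_2$, $\mu_2$, $\mu_3$ in those bounds. The main obstacle I anticipate is the lower bound $C_\omega(|A|^2+|a|^2)\le Q(x_1,A,a)$: one must show the infimizing correctors cannot destroy the coercivity coming from $\iota(m(A,a))$, i.e.\ that $\inf_{\psi}\|\sym(\iota(m(A,a))+\nabla_{h_n}\psi^{h_n})\|_{L^2}^2$ stays bounded below by a constant multiple of $\|m(A,a)\|_{L^2}^2$ on the admissible class; this requires carefully combining the Griso-type decomposition of Corollary~\ref{cormaroje}(c) (which writes the strain as $\sym\iota((A^h)'d_\omega)+\sym\nabla_{h_n}v^h+O(h)$ with $A^h\to0$, $v^h\to0$ in $L^2$) with the constraint $\int_\omega x_3\psi_2^{h_n}\to0$ to rule out cancellation, and is the only place where the special geometry of the rod (the normalization \eqref{normalizacija}, the moments $\mu_2,\mu_3$) is essential. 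Everything else is bookkeeping of $\Gamma$-convergence set functions as in \cite{DM93} and \cite{Velcic-13}.
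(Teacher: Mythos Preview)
Your overall outline for the integral representation \eqref{integralineq} is sound and matches the paper (which defers this part to \cite{Velcic-13}): set-additivity from gluing/restricting optimal correctors via Lemma~\ref{peter1} and Corollary~\ref{pommmm}(II), absolute continuity from the $\psi^{h_n}=0$ upper bound, then Lebesgue differentiation. One slip: your claim that for constant $(A_0,a_0)$ ``by translation/scaling invariance of the construction the density is a constant $\widetilde Q(A_0,a_0)$ on that interval'' is false---there is no translation invariance because $Q^{h_n}(x,\cdot)$ depends on $x_1$. The density is $Q(x_1,A_0,a_0)$ and genuinely varies in $x_1$; the paper simply \emph{defines} $Q(\bar x_1,A,a):=\lim_{r\to 0}\tfrac{1}{2r}K(m(A,a),B(\bar x_1,r))$ for constant $(A,a)$, deduces quadraticity of $Q(\bar x_1,\cdot,\cdot)$ from quadraticity of $m\mapsto K(m,O)$, and then extends \eqref{integralineq} to nonconstant $(A,a)$ by piecewise-constant approximation and Lemma~\ref{lem:ocjena}. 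This is a repairable confusion, not a fatal one.

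The genuine gap is in the coercivity lower bound, which you correctly flag as the crux but do not actually resolve. After Corollary~\ref{cormaroje}(c) writes the corrector strain as $\sym\iota((A^h)'d_\omega)+\sym\nabla_{h_n}v^h+o(1)$ with $A^h\to 0$, $v^h\to 0$ in $L^2$, one controls the $(1,1)$ entry (giving $|a|^2+\mu_2 A_{12}^2+\mu_3 A_{13}^2$) easily by weak convergence of $(A^h)'$ and $\partial_1 v_1^h$ to zero, exactly as you suggest. The difficulty is the torsion component $A_{23}$, which sits in the $(1,2)$ and $(1,3)$ entries together with the terms $\tfrac{1}{h}\partial_2 v_1^h$ and $\tfrac{1}{h}\partial_3 v_1^h$: these are \emph{not} weakly small and can in principle cancel $A_{23}(x_3,-x_2)^t$ in $L^2$. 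The paper's device is to project, for each fixed $x_1$, onto the $L^2(\omega;\R^2)$-orthogonal complement of the closed subspace $G(\omega)=\{\nabla p:\,p\in H^1(\omega)\}$. This projection $P$ annihilates $\tfrac{1}{h}(\partial_2 v_1^h,\partial_3 v_1^h)^t$, leaves only $P(\partial_1 v_2^h,\partial_1 v_3^h)^t$ (which \emph{does} go weakly to zero), and preserves a nontrivial piece of the torsion field because $(x_3,-x_2)^t$ is not a gradient; the resulting constant is $\bar C_\omega=\int_\omega |P(x_3,-x_2)^t|^2>0$. Without this projection step your coercivity argument does not close, and neither the constraint $\int_\omega x_3\psi_2^{h_n}\to 0$ nor a generic ``Poincar\'e/orthogonality'' remark substitutes for it.
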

\begin{proof}
The existence of $Q$  and  the proof of \ref{integralineq} is identitical as in \cite{Velcic-13}. Therefore, we will only prove the boundedness and coercivity property.
The function $Q$ is defined via (see \cite{Velcic-13})
\begin{equation}\label{defQ1}
Q(\bar{x}_1,A,a) =\lim_{r \to 0}\frac{1}{2r}K\left(m(A,a),B(\bar{x}_1,r)\right), \textrm{for a.e. } \bar{x}_1 \in [0,L].
\end{equation}
The upper bound in (\ref{boundQ}) is easily obtained by taking the zero subsequence $\vartheta_n=0$ and by using (Q1) and (\ref{normalizacija}) to deduce
$$\vert Q(\bar{x}_1,A,a) \vert \leq \eta_2 \left( \vert \sym \iota(a + Ad_{\omega}) \vert^2 \right) \leq  \eta_2 \left(  \max{ \left\{\mu_2,\mu_3\right\}} \vert A   \vert^2 + \vert a \vert^2 \right),$$
for a.e. $\bar{x}_1 \in (0,L)$.

From the Assumption \ref{konacna} and Corollary \ref{cormaroje} we deduce that there are bounded sequences  $(A^h)_{h>0} \subset  W^{1,2}([0,L];\mathbb{M}^3_{\skw})$ and $(z^h)_{h>0} \subset W^{1,2}(\Omega;\R^3)$ such that $A^h \to 0$ and $v^h \to 0$ strongly in $L^{2}$ and
$$K(m(A,a),B(\bar{x}_1,r))=\lim_{h \to 0}\int_{B(\bar{x}_1,r) \times \omega}  Q^{h} \left(x,\iota(m) + \sym \iota((A^h)'d_{\omega}) + \sym{\nabla_{h} v^h}\right) \ud x. $$
for some $C>0$. We can assume, by the density argument, that $v^h$ and $A^h$ are smooth functions.
Using the property (Q1) we have
\begin{align*}
K(m(A,a),B(\bar{x}_1,r)) \geq  \eta_1 (I_1 + I_2),
\end{align*}
where $I_1$ and $I_2$ are defined by:
\begin{align*}
I_1 &= \lim_{h\to 0} \int\limits_{B(\bar{x}_1,r)\times \omega} \left( a + A_{12}x_2 + A_{13}x_3 + (A^h_{12})'x_2 + (A^h_{13})'x_3 +\partial_1{v_1^h} \right)^2 dx \\
I_2 &= \frac{1}{2}\lim_{h\to 0} \left\{ \int\limits_{B(\bar{x}_1,r)\times \omega} \left( A_{23}x_3 + (A_{23}^h)' x_3 + \partial_1{v_2^h}+ \frac{\partial_{2}{v_1^h}}{h} \right)^2 dx  \right.  \\
 &  \left. + \int\limits_{B(\bar{x}_1,r)\times \omega} \left( - A_{23}x_2 -(A_{23}^h)' x_2 +\partial_1{v_3^h} + \frac{\partial_{3}{v_1^h}}{h} \right)^2 dx  \right\}
\end{align*}
From the choice of the coordinate axis, see (\ref{normalizacija}), we have that for every $x_1 \in B(\bar{x}_1,r)$
$$\int\limits_{\{x_1\} \times\omega}  a A_{12}x_2  \ud x_2 \ud x_3 =  \int\limits_{\{x_1\} \times\omega}  a A_{13}x_3 \ud x_2 \ud x_3  =
\int\limits_{\{x_1\} \times \omega}  A_{13}A_{12}x_2 x_3 \ud x_2 \ud x_3  =0.
$$
Thus, we derive that
\begin{align*}
I_1 & \geq \int_{B(\bar{x}_1,r)\times \omega}\left( \vert a \vert^2 + x_2^2 A_{12}^2 + A_{13}x_3^2 \right) \ud x \\ + & 2\lim_{h \to 0} \int_{B(\bar{x}_1,r)\times \omega} \left( a + A_{12}x_2 + A_{13}x_3\right)\left( (A^h_{12})'x_2 + (A^h_{13})'x_3 +\partial_1{v_1^h} \right) \ud x .
\end{align*}
Since
$(A_h)' \rightharpoonup 0$ and $\partial_1{v_1}^h \rightharpoonup 0$ weakly in $L^2$  the mixed term vanishes as $h \to 0$.
Hence, we obtain that
\begin{equation}\label{ocjenaI1}
I_1 \geq 2r \left( \vert a \vert^2 + \mu_2 \vert A_{12}\vert^2 +\mu_3 \vert A_{13}\vert^2 \right).
\end{equation}
To obtain the lower bound for $I_2$ we
we look for a solution of the minimum problem
$$\min_{\psi \in H^1(\omega)} \int_{\omega} \vert u - \nabla \psi \vert^2 \ud x.$$
The solution of the problem is unique up to constant and  satisfies the variational equation
\begin{equation}\label{project}
\int_{\omega} \left(\nabla \varphi_u  - u\right)\cdot \nabla \psi \ \ud x = 0,
\end{equation}
for every $\psi \in H^1(\omega) $.
The solution corresponds to $L^2$ projection on the space
\begin{equation*}\label{projequation}
G(\omega)= \left\{w \in L^2(\omega;\R^2): w=\nabla p, \mbox{ for some } p \in H^1(\omega)  \right\},
\end{equation*}
which is a closed subspace in $L^2(\omega;\R^2)$.
 We denote with $Pu = u - \nabla \varphi_u $.
Denote also with
$$\Psi^h(x)= \left( A _{23}+A_{23}^h \right)
\left(
\begin{array}{c}
x_3 \\
-x_2
\end{array}
\right) +
\left(
\begin{array}{c}
\partial_1{v_2^h}\\
\partial_1{v_3^h}
\end{array}
\right)
 +
 \frac{1}{h}
\left(
\begin{array}{c}
\partial_2{v_1^h}\\
\partial_3{v_1^h}
\end{array}
\right)
$$
we have that
$$I_2 =\int\limits_{B(\bar{x}_1,r)\times \omega} \vert \Psi^h \vert^2  dx \geq \int\limits_{B(\bar{x}_1,r)\times \omega} \vert P\Psi^h \vert^2  dx,$$
where $P \Psi^h$ equals
$$P(\Psi^h(x))= \left( A _{23}+A_{23}^h \right)
P\left(
\begin{array}{c}
x_3 \\
-x_2
\end{array}
\right) +
P
\left(
\begin{array}{c}
\partial_1{v_2^h}\\
\partial_1{v_3^h}
\end{array}
\right).
$$
Notice that the projection is done for fixed $x_1 \in [0,L]$.
This yields that:
\begin{align*}
I_2 & \geq \bar{C}_{\omega} \lim_{h \to 0} \left( \int_{B(\bar{x}_1,r) } \vert A_{23} \vert^2 +2
\int_{B(x_1,r)} A_{23} (A_{23}^h)'  dx_1\right) \\
&+2 \lim_{h \to 0} \int_{B(\bar{x}_1,r) \times \omega } A_{23} P \left(
\begin{array}{c}
x_3 \\
-x_2
\end{array}
\right)
\cdot
P\left(
\begin{array}{c}
\partial_1{v_2^h}\\
\partial_1{v_3^h}
\end{array}
\right)   dx
\end{align*}
where the constant $\bar{C}_{\omega}$ equals
\begin{equation}\label{defC}
\bar{C}_{\omega}=\int_{\omega} \left\vert P\left(\begin{array}{c}
x_3 \\
-x_2
\end{array}
\right)\right\vert^2 \ud x_2 \ud x_3.
\end{equation}
Since $A_h' \rightharpoonup  0$ in $L^2$ the second term converges to zero.
Since $P$ is the projection we have that
\begin{align*}
\int \limits_{B(\bar{x}_1,r) \times \omega} A_{23}P\left(
\begin{array}{c}
x_3 \\
-x_2
\end{array}
\right)
\cdot
P\left(
\begin{array}{c}
\partial_1{v_2^h}\\
\partial_1{v_3^h}
\end{array}
\right)   \ud x = \int \limits_{B(\bar{x}_1,r) \times \omega} A_{23}P\left(
\begin{array}{c}
x_3 \\
-x_2
\end{array}
\right)
\cdot
\left(
\begin{array}{c}
\partial_1{z_2^h}\\
\partial_1{z_3^h}
\end{array}
\right)   \ud x \to 0,
\end{align*}

since $\partial_1{z}^h \rightharpoonup 0$ weakly in $L^2$.  We obtain that
$$I_2 \geq 2r \bar{C}_{\omega}  A_{23}^2.$$
Combing this with \eqref{defQ1} and (\ref{ocjenaI1}) and taking the limit as $r \to 0$ yields the coercivity of $Q$.
\end{proof}

We define the function  $Q_0: [0,L] \times \mathbb{M}^3_{\skw} \to \R$ such that
\begin{equation}
Q_0(x_1,A) =\min_{a \in \R} Q(x_1,A,a),
\end{equation}
and mapping $a_{\min}:[0,L] \times \mathbb{M}^3_{\skw} \to \R$ that satisfies
\begin{equation} \label{defa}
 Q_0(x_1,A)=Q(x_1,A,a(x_1, A)).
\end{equation}
It is easy to see that  $Q_0$ satisfies the following property.
\begin{itemize}
 \item[($Q'_{0}1$)] For  almost all $x_1\in [0,L]$ the map $Q_0(x_1,\cdot,\cdot)$ is a quadratic form and
      satisfies
    \begin{equation*}
      \eta_1 \min\{\mu_1,\mu_2,\bar{C}_{\omega}\} |A|^2 \leq Q_0 (x_1,A)\leq \eta_2 \max\{\mu_1,\mu_2\}|A|^2 \qquad \text{for all $A \in \mathbb{M}^3_{\skw}$,}
    \end{equation*}
where $\bar{C}_{\omega}$ is defined in (\ref{defC}).
\end{itemize}
The mapping $a_{\min}$ is well defined, linear in $A$ and for some $C_a>0$ we have
$$ |a_{\min}(x_1,A)| \leq C_a |A|, \textrm{ for a.e. } x_1 \in [0,L]    .$$
\subsection{Identification of $\Gamma$-limit} We will state and prove liminf and limsup inequality.
\begin{theorem}\label{theorem1}
Let Assumption \ref{konacna} be valid.
Assume that the sequence of deformations $(y^h)_{h>0} \subset W^{1,2}(\Omega;\R^3)$ satisfy \eqref{uvjetnastationary}. Then every sequence $(h_n)_{n \in \N}$  monotonly decreasing to zero has its subsequence (still denoted by $(h_n)_{n \in \N}$)  such that the following is valid
\begin{enumerate}
\item there exists $R \in W^{1,2}([0,L];\SO 3)$ such that $\nabla_{h_n} y^{h_n} \to R$ strongly in $L^2$.
\item $$ \liminf_{n \to \infty} \tfrac{1}{h_n^2} \int_{\Omega} W^{h_n}(x,\nabla_{h_n} y^{h_n}) \geq \int_{[0,L]} Q_0(R^t R')dx_1. $$
\end{enumerate}
\end{theorem}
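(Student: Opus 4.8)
I would begin by applying the rigidity estimate recalled above (\cite[Proposition 4.1]{MoraMuller08}), whose hypothesis holds by \eqref{uvjetnastationary} together with (W2): this produces $R^{h_n}\in C^\infty([0,L];\M^3)$ with $R^{h_n}(x_1)\in\SO 3$ satisfying \eqref{odnos1} and \eqref{odnos2}. By \eqref{odnos2} the sequence $(R^{h_n})$ is bounded in $W^{1,2}([0,L];\M^3)$, hence along a subsequence $R^{h_n}\rightharpoonup R$ weakly in $W^{1,2}$ and strongly (uniformly) in $L^\infty$; uniform convergence and closedness of $\SO 3$ force $R\in W^{1,2}([0,L];\SO 3)$. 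Then \eqref{odnos1} gives $\|\nabla_{h_n}y^{h_n}-R\|_{L^2}\leq Ch_n+\|R^{h_n}-R\|_{L^2}\to 0$, which is assertion (1).

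\textbf{Step 2 (linearization).} Next I would introduce the scaled strain $G^{h_n}:=h_n^{-1}\big((R^{h_n})^t\nabla_{h_n}y^{h_n}-\id\big)$, which is bounded in $L^2(\Omega;\M^3)$ by \eqref{odnos1}, so that (passing to a further subsequence) $G^{h_n}\rightharpoonup G$ weakly in $L^2$. By frame indifference \eqref{ass:frame-indifference}, $W^{h_n}(x,\nabla_{h_n}y^{h_n})=W^{h_n}(x,\id+h_nG^{h_n})$. With the truncation $\chi_n:=\chi_{\{|G^{h_n}|\le h_n^{-1/2}\}}$ one has $|\Omega\setminus\{\chi_n=1\}|\le h_n\|G^{h_n}\|_{L^2}^2\to 0$, hence $\chi_nG^{h_n}\rightharpoonup G$; and on $\{\chi_n=1\}$, where $|h_nG^{h_n}|\le h_n^{1/2}\to 0$, the quadratic expansion (W4) with the uniform control \eqref{eq:94} and the $2$-homogeneity of $Q^{h_n}(x,\cdot)$ yield a standard lower bound (cf. \cite{FJM-06})
\begin{equation*}
\liminf_{n\to\infty}\tfrac{1}{h_n^2}\int_\Omega W^{h_n}(x,\nabla_{h_n}y^{h_n})\,\ud x\;\geq\;\liminf_{n\to\infty}\int_\Omega Q^{h_n}(x,\chi_nG^{h_n})\,\ud x .
\end{equation*}
To make the remainder in \eqref{eq:94} harmless I would argue as in \cite{Velcic-13}, after the equi-integrability reduction of Lemma \ref{lem:improveglavnazamjena}/Corollary \ref{pommmm}.

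\textbf{Step 3 (reduction to the cell formula).} This is the heart of the matter, and where I expect the difficulty. The plan is to show that $\sym(\chi_nG^{h_n})$ equals, up to an $L^2$-vanishing error, $\sym\iota\big(m(R^tR',a)\big)+\sym\nabla_{h_n}\psi^{h_n}$ for a fixed $a\in L^2([0,L])$ and some $(\psi^{h_n})$ admissible in the sense of \eqref{gli}. Concretely: using $\nabla_{h_n}y^{h_n}=R^{h_n}(\id+h_nG^{h_n})$, the bound \eqref{odnos2}, and the convergences of Steps 1--2 (as in \cite{MoMu03,Neukamm-11}), the ``curvature part'' of $\sym G^{h_n}$ converges to $\sym\iota(R^tR'd_\omega)$ modulo a scalar warping term $a^{h_n}$; the remaining part is decomposed, via Corollary \ref{cormaroje} applied to an auxiliary displacement (after reducing to equi-integrable $|\sym\nabla_{h_n}\,\cdot\,|^2$), as $\sym\iota((B^{h_n})'d_\omega)+\sym\nabla_{h_n}w^{h_n}+O(h_n)$ with $B^{h_n}\to 0$ and $w^{h_n}\to 0$ in $L^2$; and, since the leftover tensors (e.g. $(R^{h_n})^t(R^{h_n})'-R^tR'$ and $a^{h_n}-a$) converge to $0$ only weakly but have strongly $L^2$-convergent primitives, Lemma \ref{peter1} lets me rewrite $\sym\iota(\,\cdot\,d_\omega)$ of each of them as $\sym\nabla_{h_n}$ of an admissible corrector, to be absorbed into $\psi^{h_n}$. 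Invoking (Q1) (only symmetric parts enter $Q^{h_n}$) and the $L^2$-continuity bound of Remark \ref{ocjenkv}/Lemma \ref{lem:ocjena} to drop the $L^2$-null error, one then gets
\begin{align*}
\liminf_{n\to\infty}\int_\Omega Q^{h_n}(x,\chi_nG^{h_n})\,\ud x
&\geq\liminf_{n\to\infty}\int_\Omega Q^{h_n}\big(x,\iota(m(R^tR',a))+\nabla_{h_n}\psi^{h_n}\big)\,\ud x\\
&\geq K\big(m(R^tR',a),[0,L]\big),
\end{align*}
the last step by the definition of $K^-_{(h_n)}$ in \eqref{gli} and Assumption \ref{konacna}. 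Finally, Proposition \ref{identi} and $Q(x_1,A,a)\geq Q_0(x_1,A)$ give
\begin{align*}
K\big(m(R^tR',a),[0,L]\big)&=\int_0^L Q\big(x_1,R^t(x_1)R'(x_1),a(x_1)\big)\,\ud x_1\\
&\geq\int_0^L Q_0\big(x_1,R^t(x_1)R'(x_1)\big)\,\ud x_1 ,
\end{align*}
and together with Step 2 this proves (2).

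\textbf{Main obstacle.} Steps 1, 2 and the final chain of inequalities are routine given the machinery already in place; the real work is Step 3 --- identifying the precise algebraic form of the weak limit of the scaled strains and building from it a genuinely \emph{admissible} test sequence for the cell functional $K$. Two points need care: (a) the relevant curvature tensor $(R^{h_n})^t(R^{h_n})'$ and the warping scalar $a^{h_n}$ converge to their limits only weakly in $L^2$, so they cannot be subtracted off directly and must instead be rerouted through Lemma \ref{peter1} using their (strongly $L^2$-convergent) antiderivatives; and (b) the truncation $\chi_n$ destroys the exact scaled-gradient structure of $G^{h_n}$, which is why one must first replace the sequences involved by ones with equi-integrable symmetrized scaled gradient (Lemma \ref{lem:improveglavnazamjena}, Corollary \ref{pommmm}) before the decomposition of Corollary \ref{cormaroje} can be applied with an asymptotically negligible error.
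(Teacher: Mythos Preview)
Your outline is correct and follows the paper's proof in structure; your obstacles (a) and (b) are precisely the two technical points the paper addresses. Two tool-swaps are worth flagging. First, Lemma~\ref{lem:improveglavnazamjena} and Corollary~\ref{pommmm} concern the \emph{optimal} sequence for $K(m,O)$ and do not give an equi-integrability reduction for the corrector arising from a \emph{given} $y^{h_n}$; the paper instead applies the generic decomposition Lemmas~\ref{prvavazna}--\ref{drugavazna} (Fonseca--M\"uller--Pedregal / Braides--Zeppieri type) separately to the primitive $\tilde A^{h_n}=\int_0^{x_1}\big((R^{h_n})^t(R^{h_n})'-R^tR'\big)$ and to a corrector $\tilde v^{h_n}$, and only then assembles $\psi_k$ via Lemma~\ref{peter1}. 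Second, the paper does not route through Corollary~\ref{cormaroje}: it writes the explicit ansatz
\[
y^{h_n}=\tfrac{1}{|\omega|}\int_{\{x_1\}\times\omega}y^{h_n}+h_nx_2R^{h_n}e_2+h_nx_3R^{h_n}e_3+h_nv^{h_n},
\]
computes $G^{h_n}=\iota\big((A+A^{h_n})d_\omega\big)+\big((R^{h_n})^tp^{h_n}\,|\,0\,|\,0\big)+(R^{h_n})^t\nabla_{h_n}v^{h_n}$ directly (with $p^{h_n}=(h_n|\omega|)^{-1}\int_{\{x_1\}\times\omega}(\partial_1y^{h_n}-R^{h_n}e_1)$), observes that the commutator $(R^{h_n})^t\nabla_{h_n}v^{h_n}-\nabla_{h_n}\big((R^{h_n})^tv^{h_n}\big)\to 0$ in $L^2$ by \eqref{odnos2} and a Poincar\'e estimate on $v^{h_n}$, and then reroutes the weakly convergent pieces $A^{h_n}$ and $(R^{h_n})^tp^{h_n}-R^tp$ through their antiderivatives exactly as your obstacle (a) anticipates. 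The truncation $\chi_{C^{h}}$ and the final chain of inequalities are as you describe.
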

\begin{proof}
We take $(R^h)_{h>0} \subset C^{\infty}([0,L]; \R^{3 \times 3})$ such that $R^h(x_1) \in \SO 3$ for a.e. $x_1 \in[0,L]$ and $R^h$  satisfies
\eqref{odnos1} and \eqref{odnos2}.
From \eqref{odnos2} we conclude that on a subsequence $R^{h_n} \rightharpoonup R$ weakly in $W^{1,2}([0,L];\R^3)$ and thus also in $C([0,L];\R^3)$.
We write the following decomposition
\begin{equation} \label{dekomp}
 y^{h_n}=\tfrac{1}{|\omega|} \int_{\{x_1\} \times \omega} y^{h_n} +h_nx_2 R^{h_n} e_2+h_nx_3R^{h_n} e_3+h_n v^{h_n}.
\end{equation}
Using \eqref{normalizacija} it is easy to see that for a.e. $x_1 \in [0,L]$ we have
\begin{equation} \label{norma1}
\int_{\{x_1\} \times \omega} v^{h_n} dx=0.
\end{equation}
We prove that $\|\nabla_{h_n} v^{h_n}\|_{L^2}$ is bounded. It is easy to see
$$\nabla_{h_n} v^{h_n}=\tfrac{1}{h_n}(\nabla_{h_n} y^{h_n}-R^{h_n})-\left(p^{h_n}+x_2(R^{h_n})'e_2+x_3 (R^{h_n})'e_3|0|0 \right), $$
where
$$ p^{h_n}= \tfrac{1}{{h_n}|\omega|} \int_{\{x_1\} \times \omega} \left(\partial_1 y^{h_n}-R^{h_n} e_1\right). $$
Using \eqref{odnos1} we conclude that there exists $C>0$ such that
\begin{equation} \label{ocjenap}
\|p^{h_n}\|_{L^2} \leq C.
\end{equation}
Using \eqref{odnos1} and \eqref{odnos2} we conclude that there exists $C>0$ such that
\begin{equation} \label{norma2}
\|\nabla_{h_n} v^{h_n}\|_{L^2} \leq C.
\end{equation}
Using \eqref{norma1} and Poincare inequality  we conclude that for some $C>0$
\begin{equation}\label{ocjenav}
\|v^{h_n}\|_{L^2} \leq Ch_n.
\end{equation}
Define the approximate strain
\begin{equation} \label{strain}
 G^{h} =\frac{(R^{h})^T \nabla_{h} y^{h} -I}{h}.
\end{equation}
From \eqref{odnos1} we conclude that $(G^h)_{h>0}$ is bounded in $L^2$.
It can be easily seen that
\begin{equation}\label{racun1}
G^{h_n}=\iota \left((A+A^{h_n}) d_{\omega}\right)+((R^{h_n})^t p^{h_n}|0|0)+(R^{h_n})^t \nabla_{h_n} v^{h_n},
\end{equation}
where $A=R^tR'$, $A^{h_n}=(R^{h_n})^t (R^{h_n})'-R^tR'$. Let $p \in L^2([0,L];\R^3)$ such that $p^{h_n} \rightharpoonup p$ weakly in $L^2$ (on a subsequence).
Take $(r_n)_{n \in \N} \subset C^1([0,L];\R^3)$ such that
\begin{equation}
r_n \to R^t p, \quad h_n (r_n)' \to 0, \textrm{ strongly in } L^2.
\end{equation}
Define
\begin{eqnarray*}
\tilde{p}^{h_n}&=&\int_0^{x_1} \left((R^{h_n})^tp^{h_n}-R^tp\right),\\
\tilde{v}^{h_n}&=&(R^{h_n})^t v^{h_n}+(h_nx_2 r_{n,2}+h_nx_3 r_{n,3},0,0)^t+\tilde{p}^{h_n}, \\
o^{h_n} &=& (R^{h_n})^t \nabla_{h_n} v^{h_n}-\nabla_{h_n} ((R^{h_n})^t v^{h_n})-(h_n x_2 (r_{n,2})'+h_n x_3 (r_{n,3})') e_1 \otimes e_1\\ & &+\sum_{i=2,3}\left(r_{n,i}-(R^t p)_i\right)e_i \otimes e_1.  \\
\tilde{A}^{h_n} &=& \int_0^{x_1} A^{h_n}.
\end{eqnarray*}
Notice that
\begin{equation*}
(R^h)^t \nabla_{h_n} v^{h_n}-\nabla_{h_n} ((R^{h_n})^t v^{h_n})=-((R^{h_n})'v^{h_n}|0|0) \to 0, \textrm{ strongly in } L^2.
\end{equation*}
This follows from \eqref{odnos2} and \eqref{ocjenav}, since we have that $\|h_n (R^{h_n})'\|_{L^\infty} \to 0$, by the Sobolev embedding. From this it follows that
\begin{equation} \label{ocjenao}
o^{h_n} \to 0 \textrm{ strongly in } L^2.
\end{equation}
It also easily follows that
\begin{equation}\label{ocjenaav}
\tilde{A}^{h_n} \to 0, \ \tilde{v}^{h_n} \to 0, \textrm{ strongly in } L^2, \quad \|\nabla_{h_n} \tilde{v}^{h_n}\|_{L^2} \leq C,
\end{equation}
for some $C>0$.
Observe that
\begin{equation}
\sym G^{h_n}= \sym \iota(Ad_{\omega})+(R^tp)_1 e_1 \otimes e_1+\sym \iota \left( (\tilde{A}^{h_n})'d_{\omega} \right)+\sym \nabla_h \tilde{v}^{h_n}+\sym o^{h_n}.
\end{equation}
Now using Lemma \ref{prvavazna} and Lemma \ref{drugavazna} we take a subsequence,  $(h_{n(k)})_{k \in \N}$ such that there exist sequences $(\bar{A}_k)_{k \in \N} \subset W^{1,2}([0,L]; M^3_{\skw})$,
$\bar{v}_k \subset W^{1,2}(\Omega; \R^3)$ which satisfy
\begin{enumerate}[(i)]
\item
$ \lim_{k \to \infty} |\{\bar{v}_k \neq   \tilde{v}^{h_{n(k)}} \textrm{ or }  \nabla \bar{v}_k \neq  \nabla \tilde{v}^{h_{n(k)}} \}|=0,$ \\ $ \lim_{k \to \infty} |\{\bar{A}_k \neq   \tilde{A}^{h_{n(k)}} \textrm{ or }   \bar{A}_k' \neq   (\tilde{A}^{h_{n(k)}})' \}|=0.$
\item $|\bar{A}_k'|^2$ and  $(|\nabla_{h_{n(k)}} \bar{v}_k|^2)_{k \in \N}$ are equi-integrable.
\end{enumerate}
It can be easily seen that $\bar{A}_k \to 0$, $\bar{v}_k \to 0$, strongly in $L^2$ (i.e. weakly in $W^{1,2}$).  By using Lemma \ref{peter1} we obtain a sequence $(\psi_k)_{k \in \N} \subset W^{1,2}(\Omega; \R^3)$ such that
\begin{eqnarray}\label{defpsi}
&& \sym \nabla_{h_{n(k)}} \psi_k = \sym \iota ((A_k)'d_{\omega})+\sym \nabla_{h_{n(k)}} \bar{v}^{h_{n(k)}}, \\ \nonumber &&(\psi_{k,1},h_{n(k)} \psi_{k,2}, h_{n(k)} \psi_{k,3}) \to 0, \ \int_{\omega} x_3 \psi_{k,2} \to 0 \textrm{ strongly in }  L^2.
\end{eqnarray}
The sequence $(| \sym \nabla_{h_{n(k)}} \psi_k|^2)_{k \in \N}$ is equi-integrable.
We define the sets
$$ C^h=\{ x \in \Omega: |G^h| \leq \tfrac{1}{\sqrt{h}} \}. $$
From the boundedness of the sequnce $(G^h)_{h>0}$ we conclude that $|\Omega \backslash C^h| \to 0$ as $h \to 0$.
Using frame indifference property we have that $W^h(x, \nabla_h y^h)=W^h(x,I+hG^h)$. From \eqref{eq:94}, by integrating, we conclude that
\begin{equation}\label{donjafinal}
\limsup_{h \to 0} \left|\tfrac{1}{h^2}\int_{\Omega} W^h (\cdot,I+h\chi_{C^h} G^h)-\int_{\Omega} Q^h(\cdot,\chi_{C^h} G^h) \right| \leq  r(\sqrt{h}) \int_{\Omega}|\chi_{C^h} G^h|^2\to 0.
\end{equation}
Finally we conclude, using the equi-integrability of  $(| \sym \nabla_{h_{n(k)}} \psi_k|^2)_{k \in \N}$, (Q1), the definition of $K$ and \eqref{donjafinal}:
\begin{eqnarray*}
& &\liminf_{k \to \infty} \tfrac{1}{h_{n(k)} ^2} \int_{\Omega} W^{h_{n(k)}}(x,\nabla_{h_{n(k)}} y^{h_{n(k)}}) \\ & &  \geq  \liminf_{k \to \infty} \tfrac{1}{h_{n(k)} ^2} \int_{\Omega}\chi_{C^{h_{n(k)}}} W^{h_{n(k)}}(x,\nabla_{h_{n(k)}} y^{h_{n(k)}})\\& &= \liminf_{k \to \infty} \int_{\Omega} Q^{h_{n(k)}}(x,\chi_{C^{h_{n(k)}}} G^h) \\ & &
= \liminf_{k \to \infty} \int_{\Omega} Q^{h_{n(k)}} \left(x,\chi_{C^{h_{n(k)}}} \left(  \iota(Ad_{\omega})+(R^tp)_1 e_1 \otimes e_1+ \iota ( (\tilde{A}^{h_n})'d_{\omega} )+ \nabla_h \tilde{v}^{h_n}       \right) \right) \\& &= \liminf_{k \to \infty} \int_{\Omega} Q^{h_{n(k)}} \left(x, \iota(Ad_{\omega})+(R^tp)_1e_1 \otimes e_1+\nabla_{h_{n(k)}} \psi^{h_{n(k)}} \right) \\ & & \geq K(m(A,(R^tp)_1,[0,L]) \\ & & \geq
\int_{[0,L]} Q_0\left(R^t R'(x_1)\right)dx_1.
\end{eqnarray*}

\end{proof}
The next theorem gives the construction of the recovery sequence.
\begin{theorem}\label{theorem2}
Let Assumption \ref{konacna} be valid. Then for every $R \in W^{1,2}([0,L];\SO 3)$ and every sequence $(h_n)_{n \in \N}$ monotonly decreasing to $0$ there exists a subsequence, still denoted by $(h_n)_{n \in \N}$, such that
\begin{enumerate}
\item there exists $(y_n)_{n \in \N} \subset W^{1,2}(\Omega;\R^3)$ such that $y_n \to \int_0^{x_1} Re_1$ strongly in $W^{1,2}$, $\nabla_{h_n} y_n \to R$ strongly in $L^2$.
\item $\lim_{n \to \infty} \frac{1}{h_n^2} \int_{\Omega} W^{h_n} (x, \nabla_{h_n} y_n)=\int_{[0,L]} Q_0\left(R^t R'(x_1)\right) \ud x_1$.
\end{enumerate}
\end{theorem}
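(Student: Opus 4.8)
My plan is to build an explicit recovery sequence by reversing the decomposition \eqref{dekomp}--\eqref{racun1} used in the lower bound: $y_n$ will be a rigid motion along a frame that bends slightly with $n$, corrected by the energy--optimal relaxation field furnished by Assumption \ref{konacna}. \emph{First I would reduce to smooth $R$.} By mollification followed by projection onto $\SO3$ (allowed since $W^{1,2}([0,L];\R^{3\times3})\hookrightarrow C^{0}$) there is $R^{\delta}\in C^{\infty}([0,L];\SO3)$ with $R^{\delta}\to R$ in $W^{1,2}$; then $(R^{\delta})^{t}(R^{\delta})'\to R^{t}R'$ in $L^{2}$, and since $Q_{0}(x_{1},\cdot)$ is a bounded quadratic form, $\int_{0}^{L}Q_{0}((R^{\delta})^{t}(R^{\delta})')\to\int_{0}^{L}Q_{0}(R^{t}R')$, so a diagonal argument over $\delta$ (and over the subsequences produced for each $R^{\delta}$) reduces the theorem to $R\in C^{\infty}$. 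For such $R$ put $A:=R^{t}R'$, $a(x_{1}):=a_{\min}(x_{1},A(x_{1}))\in L^{\infty}([0,L])$ and $m:=m(A,a)$; by Proposition \ref{identi} and \eqref{defa},
\[
K(m,[0,L])=\int_{0}^{L}Q\big(x_{1},A(x_{1}),a_{\min}(x_{1},A(x_{1}))\big)\,\ud x_{1}=\int_{0}^{L}Q_{0}(R^{t}R')\,\ud x_{1},
\]
the value to be attained, and note $\iota(m)=\iota(Ad_{\omega})+a\,e_{1}\otimes e_{1}$.

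\emph{Next I would extract and truncate the optimal corrector.} Applying Assumption \ref{konacna} for $m$ and the given $(h_{n})$, then Corollaries \ref{pommmm} and \ref{cormaroje}, one gets (along a subsequence, not relabelled) $(\vartheta_{n})$ satisfying (a),(b) with $(|\sym\nabla_{h_{n}}\vartheta_{n}|^{2})_{n}$ equi-integrable and
\[
\sym\nabla_{h_{n}}\vartheta_{n}=\sym\iota\big((A_{n})'d_{\omega}\big)+\sym\nabla_{h_{n}}v_{n}+O(h_{n}),
\]
with $A_{n}\to0$ in $W^{1,2}([0,L];\M^{3}_{\skw})$ (hence uniformly), $v_{n}\to0$ in $L^{2}$, and $\|A_{n}\|_{W^{1,2}},\|\nabla_{h_{n}}v_{n}\|_{L^{2}}$ bounded. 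Truncating $A_{n}$ and $v_{n}$ by the Friesecke--James--M\"uller type Lemmas \ref{prvavazna} and \ref{drugavazna} at a level $\lambda_{n}\to\infty$ still at our disposal, I replace them by $\tilde A_{n},\tilde v_{n}$ coinciding with $A_{n},v_{n}$ off sets of vanishing measure, with $\|(\tilde A_{n})'\|_{L^{\infty}}+\|\nabla_{h_{n}}\tilde v_{n}\|_{L^{\infty}}+\|\tilde v_{n}\|_{L^{\infty}}\le C\lambda_{n}$, still bounded in $L^{2}$ and with $|(\tilde A_{n})'|^{2},|\nabla_{h_{n}}\tilde v_{n}|^{2}$ equi-integrable, and $\tilde A_{n}\to0$ uniformly, $\tilde v_{n}\to0$ in $L^{2}$.

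\emph{Then I would write down the deformation.} Let $R^{h_{n}}\in W^{1,2}([0,L];\SO3)$ solve $R^{h_{n}}(0)=R(0)$, $(R^{h_{n}})^{t}(R^{h_{n}})'=A+(\tilde A_{n})'$; since $\tilde A_{n}\to0$ uniformly, a Gr\"onwall estimate gives $R^{h_{n}}\to R$ uniformly. Set
\[
y_{n}(x)=\int_{0}^{x_{1}}R^{h_{n}}e_{1}+h_{n}\int_{0}^{x_{1}}aR^{h_{n}}e_{1}+h_{n}\big(x_{2}R^{h_{n}}e_{2}+x_{3}R^{h_{n}}e_{3}\big)+h_{n}R^{h_{n}}\tilde v_{n}.
\]
A direct computation yields, for $G^{h_{n}}:=h_{n}^{-1}\big((R^{h_{n}})^{t}\nabla_{h_{n}}y_{n}-I\big)$,
\[
G^{h_{n}}=\iota\big((A+(\tilde A_{n})')d_{\omega}\big)+a\,e_{1}\otimes e_{1}+\iota\big((A+(\tilde A_{n})')\tilde v_{n}\big)+\nabla_{h_{n}}\tilde v_{n},
\]
so that (using Step 2 and $\iota(m)=\iota(Ad_{\omega})+a\,e_{1}\otimes e_{1}$) the difference $\sym G^{h_{n}}-\big(\sym\iota(m)+\sym\nabla_{h_{n}}\vartheta_{n}\big)$ consists of $\sym\iota((A+(\tilde A_{n})')\tilde v_{n})$, whose $L^{2}$-norm is $\le C\lambda_{n}\|\tilde v_{n}\|_{L^{2}}$, of $O(h_{n})$-terms, and of terms supported on the vanishing-measure truncation sets with equi-integrable squares. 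Choosing $\lambda_{n}\to\infty$ slowly enough that both $\lambda_{n}\|\tilde v_{n}\|_{L^{2}}\to0$ and $h_{n}\lambda_{n}^{2}\to0$, I obtain $\sym G^{h_{n}}-\sym(\iota(m)+\nabla_{h_{n}}\vartheta_{n})\to0$ in $L^{2}$ and $\|h_{n}G^{h_{n}}\|_{L^{\infty}}\le Ch_{n}\lambda_{n}^{2}\to0$; hence $\dist^{2}(\nabla_{h_{n}}y_{n},\SO3)\le h_{n}^{2}|G^{h_{n}}|^{2}\to0$ uniformly, $\nabla_{h_{n}}y_{n}=R^{h_{n}}(I+h_{n}G^{h_{n}})\to R$ strongly in $L^{2}$ and $y_{n}\to\int_{0}^{x_{1}}Re_{1}$ strongly in $W^{1,2}$, which is (1).

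\emph{Finally the energy, and where the difficulty lies.} By frame indifference $W^{h_{n}}(x,\nabla_{h_{n}}y_{n})=W^{h_{n}}(x,I+h_{n}G^{h_{n}})$, and since $\|h_{n}G^{h_{n}}\|_{L^{\infty}}\to0$ the quadratic expansion \eqref{eq:94} is uniformly applicable on $\Omega$ (its remainder modulus is evaluated only at arguments tending to $0$); moreover $(|G^{h_{n}}|^{2})_{n}$ is bounded in $L^{1}$ (each of the four summands above is, by the $L^{2}$ bounds and $\lambda_{n}\|\tilde v_{n}\|_{L^{2}}\to0$), so exactly as in the proof of Theorem \ref{theorem1} one gets $\tfrac1{h_{n}^{2}}\int_{\Omega}W^{h_{n}}(x,\nabla_{h_{n}}y_{n})=\int_{\Omega}Q^{h_{n}}(x,G^{h_{n}})+o(1)=\int_{\Omega}Q^{h_{n}}(x,\sym G^{h_{n}})+o(1)$. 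By Remark \ref{ocjenkv}, the $L^{2}$-convergence and $L^{2}$-boundedness of Step 3, and $Q^{h_{n}}(x,\cdot)=Q^{h_{n}}(x,\sym\,\cdot)$, this equals $\int_{\Omega}Q^{h_{n}}(x,\iota(m)+\nabla_{h_{n}}\vartheta_{n})+o(1)\to K(m,[0,L])=\int_{0}^{L}Q_{0}(R^{t}R')$, which is (2). The delicate point --- shared with the lower bound --- is that the corrector's scaled symmetrized gradient is merely bounded (not equi-integrable, not pointwise bounded) in $L^{2}$, while $W^{h_{n}}$ is only quadratically controlled near $\SO3$ with a remainder that may blow up at finite strain; the remedy is precisely the joint Lipschitz truncation of the bending part $A_{n}$ and the warping part $v_{n}$, with the cut-off level $\lambda_{n}$ tuned (by a diagonal choice) so that simultaneously (i) $h_{n}\lambda_{n}^{2}\to0$ keeps $\nabla_{h_{n}}y_{n}$ inside the good region where \eqref{eq:94} is uniformly valid, (ii) $\lambda_{n}\|\tilde v_{n}\|_{L^{2}}\to0$ annihilates the cross term $\iota((A+(\tilde A_{n})')\tilde v_{n})$ produced by differentiating the bent frame, and (iii) equi-integrability makes the truncation alter the corrector energy by only $o(1)$.
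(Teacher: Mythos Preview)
Your proposal is correct and follows the paper's strategy step for step: reduce to smooth $R$, extract the optimal corrector from Assumption~\ref{konacna}, decompose it via Corollary~\ref{cormaroje}, Lipschitz--truncate via Lemmas~\ref{prvavazna}--\ref{drugavazna}, build a bent frame through the ODE $(R^{h_n})'=R^{h_n}(A+\tilde A_n')$, and linearise with~\eqref{eq:94} once $\|h_nG^{h_n}\|_{L^\infty}\to 0$.

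The one genuine difference is in the ansatz. You attach the warping corrector to the \emph{moving} frame, writing $h_nR^{h_n}\tilde v_n$, which keeps $y_n$ short but produces the cross term $\iota\big((A+\tilde A_n')\tilde v_n\big)$ that you then kill by the subsequence condition $\lambda_n\|\tilde v_n\|_{L^2}\to 0$. The paper instead attaches the corrector to the \emph{fixed} frame $R$ and adds explicit order-$h^2$ and integral correction terms; the resulting cross term is only $R'\bar v_k$ with $R'$ bounded, so $\|\bar v_k\|_{L^2}\to 0$ alone suffices, and the paper needs only $k\,h_{n(k)}\to 0$ rather than your $h_n\lambda_n^2\to 0$. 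Your route trades a simpler recovery formula for a slightly stronger subsequence extraction; both are valid.

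Two small corrections. First, Corollary~\ref{cormaroje} gives $A_n\to 0$ only in $L^2$ together with a $W^{1,2}$-bound, not strong $W^{1,2}$ convergence; your ``hence uniformly'' then follows from the compact embedding $W^{1,2}([0,L])\hookrightarrow C([0,L])$. Second, the phrase ``choosing $\lambda_n\to\infty$ slowly enough'' is circular as written, since $\tilde v_n$ depends on $\lambda_n$ through the truncation. What actually works---and what your parenthetical ``diagonal choice'' points to---is: apply Lemmas~\ref{prvavazna}--\ref{drugavazna} first (this fixes $\lambda_k=k$ and gives $\tilde v_k\to 0$ in $L^2$), and \emph{then} pass to a further subsequence along which both $k^2 h_{n(k)}\to 0$ and $k\|\tilde v_k\|_{L^2}\to 0$.
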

\begin{proof}
It is easy to see that smooth rotations are dense in $W^{1,2}([0,L];\SO 3)$. This can be seen by approximating with smooth maps taking values in $\mathbb{M}^3$ and then projecting on $\SO 3$ (by Sobolev embedding weak $W^{1,2}$ implies strong convergence in $L^{\infty}$ and we can project from tubular neighbourhood of $\SO 3$).
Without loss of generality we can assume that $R \in C^2([0,L]; \SO 3)$, since in the general case we can use the diagonal procedure. Take $a \in C([0,L])$ and define $A \in C^1([0,L];\mathbb{M}^3_{\skw})$ as $A=R^t R'$.
Now we take $m=m(R^t R',a)$ and the sequence $(\vartheta_n(m))_{n \in \N} \subset W^{1,2}(\Omega;\R^3)$ which satisfies (a) and (b) of the Assumption \ref{konacna}. From Corollary \ref{pommmm} we have the boundedness and equi-integrability of $(|\sym \nabla_{h_n} \vartheta_n(m)|^2)_{n \in \N}$ (see \eqref{bound}).Using Corollary \ref{cormaroje},  we obtain a sequence $(A_n)_{n \in \N} \subset W^{1,2}([0,L];\mathbb{M}^3_{\skw})$, $(v_n)_{n \in \N} \subset W^{1,2}(\Omega;\R^3)$ such that $A_n\to 0$, $v_n \to 0$ strongly in $L^2$ and
$$ \| \sym \nabla_{h_n} \vartheta_n(m)-\sym \iota(A'_n d_{\omega})-\sym \nabla_{h_n} v_n\|_{L^2} \to 0. $$
Moreover we have that
 \begin{equation}
\sup_{n \in \N} \|A_n\|_{W^{1,2}} + \sup_{n \in \N}\left(\|v_n\|_{L^2}+\|\nabla_{h_n} v_n\|_{L^2}\right) <\infty.
 \end{equation}
Choose a subsequence $(h_{n(k)})_{k \in \N}$  such that $k h_{n(k)} \to 0$. Using
Lemma \ref{prvavazna} and Lemma \ref{drugavazna} we conclude that there exist sequences $(\tilde{A}_k)_{k \in \N} \subset W^{1,\infty}([0,L];\mathbb{M}^3_{\skw})$ and $(\tilde{v}_k)_{k \in \N} \subset W^{1,\infty}(\Omega;\R^3)$ such that for some $C>0$ we have (on a further subsequence; not relabeled)
\begin{enumerate}
\item $|A_k'| \leq Ck$,  for a.e. $x_1 \in [0,L]$, $|\nabla_{h_{n(k)}} \tilde{v}_{k} | \leq Ck$ for a.e. $x \in \Omega$.
    \item $ \lim_{k \to \infty} |\{\tilde{A}_k \neq A_{n(k)} \textrm{ or } \tilde{A}'_k \neq A'_{n(k)}\}|=0$ \\
        $ \lim_{k \to \infty} |\{\tilde{v}_k \neq v_{n(k)} \textrm{ or } \nabla \tilde{v}_k \neq \nabla v_{n(k)} \}|=0$
\item the sequences $(|\tilde{A}_k'|^2)_{k \in \N}$, $(|\nabla_{h_{n(k)}} \tilde{v}_k|^2)_{k \in \N}$ are equi-integrable.
\end{enumerate}
It is easy to argument that $\tilde{A}_k \to 0$, $\tilde{v}_k \to 0$ strongly in $L^2$ (i.e. weakly in $W^{1,2}$).
We define the sequence $(R_k)_{k \in \N} \subset C^1([0,L];\mathbb{M}^3)$ as the solutions of the following Cauchy problem
\begin{equation}\label{ODEsystem}
\left\{ \begin{array}{rcl}R_k'&=& R_k(A+\tilde{A}'_k), \\ R_k(0) &=& R(0).   \end{array}\right.
\end{equation}
Since the right hand side of the first equation in \eqref{ODEsystem}  is Lipschitz function this system has unique solution. Moreover, since it is tangential to $\SO 3$  it can be easily argumented that we have $R_k(x_1) \in SO(3)$ for every $x_1 \in [0,L]$ (this can be done e.g. by approximating $A_k$ with smooth fields and then using the standard theorem for the solutions of ODE system whose right hand side is tangential to some smooth manifold). Notice also that $R_k \rightharpoonup R$ weakly in $W^{1,2}$ and thus, by Sobolev embedding strongly in $L^\infty$.
Define for every $k \in \N$;  $\bar{v}_k=\tilde{v}_k-\int_{\Omega} \tilde{v}_k$ to accomplish $\|\bar{v}_k\|_{W^{1,\infty}} \leq Ck$, which follows by Poincare inequality.
 Using the equi-integrability property it is easy to see that
\begin{equation} \label{ekvi}
\| \sym \nabla_{h_{n(k)}} \vartheta_{n(k)}-\sym \iota(\tilde{A}'_k d_{\omega})-\sym \nabla_{h_{n(k)}} \bar{v}_k\|_{L^2} \to 0.
\end{equation}
Define the recovery sequence with the formulae
 \begin{eqnarray*}
 y_k&=&\int_0^{x_1} R_k e_1 +h_{n(k)} x_2 R_k e_2+h_{n(k)} x_3 R_k e_3+h_{n(k)} R \bar{v}_k \\
 & &-h_{n(k)}^2\left(x_2(R^t R' \bar{v}_k)_2-x_3(R^t R' \bar{v}_k)_3\right)Re_1+h_{n(k)}\int_0^{x_1}(a-(R^t R'\bar{v}_k)_1)Re_1.
 \end{eqnarray*}
Define also
$$G_k=\frac{R_k^t \nabla_{h_{n(k)}} y_k-I}{h_{n(k)}}. $$
It is easy to see that
\begin{enumerate}
\item $\|y_k-\int_0^{x_1} Re_1 \|_{L^{\infty}} \to 0$, $\| \nabla_{h_{n(k)}} y_k -R_k\|_{L^{\infty}} \to 0$,
\item $\|h_{n(k)} G_k\|_{L^{\infty}} \to 0$,
\item $\left\| \sym G_k-a e_1 \otimes e_1-\sym \iota\left((A+\tilde{A}_k') d_{\omega} \right)-\sym \nabla_{h_{n(k)}} \bar{v}_k\right\|_{L^2} \to 0$.
\end{enumerate}
From \eqref{ekvi} we conclude that
\begin{equation} \label{zadnjaocjena}
\left\| \sym G_k-a e_1 \otimes e_1-\sym \iota\left(A d_{\omega} \right)-\sym \nabla_{h_{n(k)}} \vartheta_{n(k)} \right\|_{L^2} \to 0.
\end{equation}
Notice that from the property (W1) of Definition \ref{def:materialclass} we have that $W^{h_{n(k)}}(x,\nabla_{h_{n(k)}} y_k)= W^{h_{n(k)}}(x,I+h_{n(k)}G_k)$, for a.e. $x \in \Omega$.
Using property (iii) of Definition \ref{def:composite} as well as property (b) of $G_k$ we conclude that
$$\left| \tfrac{1}{h^2_{n(k)}} \int_{\Omega} W^{h_{n(k)}}(x,\nabla_{h_{n(k)}}y_k)- \int_{\Omega} Q^{h_{n(k)}}(x, G_k)\right| \to 0.$$
Using \eqref{ekvi} we conclude that
$$ \left|\lim_{k \to \infty} \int_{\Omega} Q^{h_{n(k)}}(x, G_k)-\int_{[0,L]} Q(x_1,A,a)\right| \to 0. $$
The claim now follows by diagonilizing procedure and approximating $a_{\min} (\cdot,A(\cdot))\in L^{\infty} ([0,L])$, defined in \eqref{defa}, with continuous maps in $L^2$ norm.
\end{proof}
\section{Appendix}

We give two technical lemmas.
\begin{lemma}\label{prvavazna}
Let $\Omega \subset \R^N$ be a Lipschitz set and $p>1$. Let $(w_n)_{n \in \N}$ be  a bounded sequence in $W^{1,p}(\Omega;\R^m)$. There exists a  subsequence $(w_{n(k)})_{k \in \N}$  such that  for every $k \in \N$ there exists $z_{k} \in W^{1,p}(\Omega;\R^m)$  which satisfies
\begin{enumerate}[(i)]
\item $ |\nabla z_k| \leq C(N)k, \textrm{ for a.e. } x \in \Omega.$
\item
$ \lim_{ \to \infty} |\Omega  \cap \{ z_{k} \neq w_{n(k)} \textrm{ or } \nabla z_{k} \neq  \nabla w_{n(k)} \}|=0. $
\item  $(|\nabla z_{k}|^p)_{k \in \N}$ is equi-integrable.
\end{enumerate}
\end{lemma}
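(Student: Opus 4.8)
\emph{Proof proposal.} This is a version of the Fonseca--M\"uller--Pedregal decomposition lemma for bounded sequences in $W^{1,p}$, and the plan is to combine the Lipschitz truncation via the Hardy--Littlewood maximal function with a biting-lemma extraction. Put $\Lambda:=\sup_{n\in\N}\|w_n\|_{W^{1,p}(\Omega)}$. Since $\Omega$ is a Lipschitz domain it admits a bounded $W^{1,p}$-extension operator, so one may fix extensions $\hat w_n\in W^{1,p}(\R^N;\R^m)$, supported in a fixed bounded set, with $\|\hat w_n\|_{W^{1,p}(\R^N)}\le C\Lambda$. Let $M$ denote the Hardy--Littlewood maximal operator, so that $\|M(|\nabla\hat w_n|)\|_{L^p(\R^N)}\le C(N,p)\Lambda$. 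For each $n$ and each $\lambda>0$ the classical Lipschitz truncation theorem (Acerbi--Fusco; Liu) produces $\hat z_{n,\lambda}\in W^{1,\infty}(\R^N;\R^m)$ with $|\nabla\hat z_{n,\lambda}|\le C(N)\min\{\lambda,M(|\nabla\hat w_n|)\}$ a.e., with $\{\hat z_{n,\lambda}\neq\hat w_n\}\cup\{\nabla\hat z_{n,\lambda}\neq\nabla\hat w_n\}$ contained, up to a null set, in the bad set $S_{n,\lambda}:=\{M(|\nabla\hat w_n|)>\lambda\}$, where $|S_{n,\lambda}|\le C(N)\lambda^{-p}\Lambda^p$, and with $\nabla\hat z_{n,\lambda}=\nabla\hat w_n$ off $S_{n,\lambda}$. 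Writing $z_{n,k}:=\hat z_{n,k}|_\Omega$ for the restriction of the truncation at level $k$, properties (i) and (ii) of the lemma hold for $z_{n,k}$ for \emph{any} choice of the index: (i) is the $L^\infty$-bound $|\nabla z_{n,k}|\le C(N)k$, and (ii) follows from $|S_{n,k}|\le C(N)k^{-p}\Lambda^p\to0$.

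The real content is (iii). I would first apply Chac\'on's biting lemma to the $L^1(\R^N)$-bounded sequences $\bigl(|\nabla\hat w_n|^p\bigr)_n$ and $\bigl(M(|\nabla\hat w_n|)^p\bigr)_n$, obtaining --- after passing to a subsequence, not relabelled --- a non-increasing family of measurable sets $B_j\subset\R^N$ with $|B_j|\to0$ such that, for each fixed $j$, both $\bigl(|\nabla\hat w_n|^p\chi_{\R^N\setminus B_j}\bigr)_n$ and $\bigl(M(|\nabla\hat w_n|)^p\chi_{\R^N\setminus B_j}\bigr)_n$ are equi-integrable; denote by $\omega_j$ the equi-integrability modulus of the second family. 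For measurable $A\subset\Omega$ one then splits $\int_A|\nabla z_{n,k}|^p$ over $A\cap B_j$, $(A\setminus B_j)\setminus S_{n,k}$ and $(A\setminus B_j)\cap S_{n,k}$: on the first piece $|\nabla z_{n,k}|\le C(N)k$, so its contribution is $\le C(N)^p k^p|B_j|$; on the second, $\nabla z_{n,k}=\nabla\hat w_n$ and the integrand lies in the first equi-integrable family; on the third, $|\nabla z_{n,k}|\le C(N)M(|\nabla\hat w_n|)$, so the contribution is $\le C(N)^p\omega_j(|S_{n,k}|)\le C(N)^p\omega_j\!\bigl(C(N)k^{-p}\Lambda^p\bigr)$. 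A careful diagonal choice of $j=j(k)\nearrow\infty$ (after, if necessary, thinning the biting sets so that $|B_j|\le 2^{-j}$) together with a correspondingly fast subsequence $n=n(k)$, so that $k^p|B_{j(k)}|\to0$ and $\omega_{j(k)}\!\bigl(C(N)k^{-p}\Lambda^p\bigr)\to0$, then yields $z_k:=z_{n(k),k}$ for which $\bigl(|\nabla z_k|^p\bigr)_k$ is equi-integrable; boundedness of $(z_k)$ in $W^{1,p}(\Omega)$ comes from $\int_\Omega|\nabla z_k|^p\le C(N)^p\int_{\R^N}M(|\nabla\hat w_{n(k)}|)^p\le C\Lambda^p$ together with the extension and Poincar\'e estimates.

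The principal obstacle is exactly this last diagonalisation: boundedness in $W^{1,p}$ does not preclude concentration of $|\nabla w_n|^p$, so $\bigl(|\nabla z_k|^p\bigr)_k$ cannot be made equi-integrable along an arbitrary subsequence --- the biting sets $B_{j(k)}$ and the subsequence $n(k)$ must be selected jointly so that the concentrated part of the gradient energy is swept into the vanishing-measure exceptional sets, while the truncation at level $k$ keeps the disagreement set of measure $\le C(N)k^{-p}\Lambda^p\to0$; in particular one must balance the competing requirements $k^p|B_{j(k)}|\to0$ (forcing $j(k)$ to grow) and $\omega_{j(k)}\!\bigl(C(N)k^{-p}\Lambda^p\bigr)\to0$ (limiting how fast $j(k)$ may grow). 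The remaining ingredients --- the extension, the maximal-function Lipschitz truncation, and the verification of (i) and (ii) --- are routine, which is presumably why the lemma is placed in the appendix; I would carry out the diagonal argument exactly along the lines of the analogous decomposition lemma in \cite{Velcic-13} and of Fonseca--M\"uller--Pedregal.
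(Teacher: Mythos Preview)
Your proposal is correct and follows precisely the Fonseca--M\"uller--Pedregal approach that the paper itself invokes: the paper's own proof of this lemma consists solely of the remark that it is implicitly contained in the proof of the decomposition lemma in \cite{FoMuPe98}, which is exactly the maximal-function Lipschitz truncation plus biting-lemma diagonalisation you have sketched. In effect you have written out in detail the argument the authors chose to skip by citation.
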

\begin{proof}
The proof is implicitly contained in the proof of Lemma 1.2. (decomposition lemma) in \cite{FoMuPe98}. We shall skip it here.
\end{proof}
\begin{lemma}\label{drugavazna}
Let $\omega \subset \R^2$ be a set with Lipschitz boundary, let $\Omega=[0,L] \times \omega$ and  let $p>1$.
Let $(w^h)_{h>0}$ be a sequence
bounded in $W^{1,p}(\Omega;\R^m)$ and let us additionally assume that the sequence
$(\|\nabla_h w^h\|_{L^p})_{h>0}$ is bounded.
Then for every sequence $(w^{h_n})_{n \in \N}$ there exists a  subsequence $(w^{h_{n(k)}})_{k \in \N}$  such that  for every $k \in \N$ there exists $z_{k} \in W^{1,p}(\Omega;\R^m)$  which satisfies
\begin{enumerate}[(i)]
\item $ |\nabla_{h_{n(k)}} z_k| \leq C(N)k, \textrm{ for a.e. } x \in \Omega.$
\item
$ \lim_{k \to \infty} |\Omega  \cap \{ z_{k} \neq w^{h_{n(k)}} \textrm{ or } \nabla z_{k} \neq  \nabla w^{h_{n(k)}} \}|=0. $
\item  $(|\nabla_{h_{n(k)}} z_{k}|^p)_{k \in \N}$ is equi-integrable.
\end{enumerate}
\end{lemma}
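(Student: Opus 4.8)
The plan is to reproduce the proof of Lemma \ref{prvavazna} (the decomposition lemma of \cite{FoMuPe98}) with the ordinary gradient everywhere replaced by the scaled gradient $\nabla_{h}$, the only point requiring care being that the constants must stay independent of $h$ as $h\to 0$. The device that makes this transparent is the anisotropic dilation $D_{h}\colon(x_1,x')\mapsto(x_1,hx')$, which maps $\Omega=[0,L]\times\omega$ onto the thin cylinder $\Omega^{h}:=[0,L]\times h\omega$ and turns $\nabla_{h}$ into the usual gradient: if $v:=w\circ D_{h}^{-1}$ then $\nabla v=(\nabla_{h}w)\circ D_{h}^{-1}$, $\|\nabla v\|_{L^p(\Omega^{h})}^p=h^{2}\|\nabla_{h}w\|_{L^p(\Omega)}^p$ and $|D_{h}(S)|=h^{2}|S|$ for every measurable $S\subset\Omega$. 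Hence every statement about $\nabla_{h}$ on $\Omega$ translates, with a scale-invariant constant, into the corresponding statement about $\nabla$ on $\Omega^{h}$, and the only constants from the Fonseca--M\"uller--Pedregal machinery that could blow up are those hidden in the extension.

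The first and only genuinely delicate step is therefore the extension, because the classical argument extends the functions to a neighbourhood of the domain and the extension operator of the degenerating cylinder $\Omega^{h}$ has norm blowing up as $h\to 0$ (equivalently, a naive cross-sectional extension of $w^{h}$ destroys the bound on $\nabla_{h}w^{h}$). I would get around this by subtracting the cross-sectional mean: set $\bar w^{h}(x_1):=\frac1{|\omega|}\int_{\omega}w^{h}(x_1,x')\,\ud x'$, fix an $h$-independent extension operator $E_{\omega}$ for the fixed Lipschitz set $\omega$ of reflection type (hence bounded both on $W^{1,p}$ and on $L^p$, and commuting with $\partial_1$ when applied slice-wise in $x'$), and put $\tilde w^{h}(x_1,x'):=\bar w^{h}(x_1)+E_{\omega}\big(w^{h}(x_1,\cdot)-\bar w^{h}(x_1)\big)(x')$ on a fixed cylinder $\hat\Omega=[0,L]\times\omega'$ with $\bar\omega\Subset\omega'$. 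Then $\tilde w^{h}=w^{h}$ on $\Omega$, and the Poincar\'e inequality on $\omega$ bounds $\|w^{h}(x_1,\cdot)-\bar w^{h}(x_1)\|_{W^{1,p}(\omega)}$ by $C(\omega)\|\partial_{x'}w^{h}(x_1,\cdot)\|_{L^p(\omega)}$, so the factor $1/h$ hidden in $\nabla_{h}$ is absorbed and one obtains $\|\nabla_{h}\tilde w^{h}\|_{L^p(\hat\Omega)}\le C(\omega)\|\nabla_{h}w^{h}\|_{L^p(\Omega)}$ together with $\|\tilde w^{h}\|_{L^p(\hat\Omega)}\le C(\omega)\|w^{h}\|_{L^p(\Omega)}$; a trivial reflection in $x_1$ across $\{x_1=0\}$ and $\{x_1=L\}$ then extends $\tilde w^{h}$ to a fixed bounded Lipschitz domain $\Omega''$ with $\Omega\Subset\Omega''$, still with the bound on $\nabla_{h}$.

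Next I would perform the Lipschitz truncation with respect to $\nabla_{h}$. For each $n$ and each level $\lambda>0$, rescale $\Omega''$ by $D_{h_n}$, apply to the rescaled function the classical truncation associated with the Hardy--Littlewood maximal function of its gradient at level $\lambda$, and rescale back; this yields $z_n^{\lambda}\in W^{1,\infty}(\Omega;\R^m)$ with $|\nabla_{h_n}z_n^{\lambda}|\le C(N)\lambda$ a.e.\ on $\Omega$, with $z_n^{\lambda}=w^{h_n}$ and $\nabla z_n^{\lambda}=\nabla w^{h_n}$ outside a set $E_n^{\lambda}$ satisfying $|E_n^{\lambda}|\le C\lambda^{-p}\|\nabla_{h_n}\tilde w^{h_n}\|_{L^p(\Omega'')}^p$, and with the sharp Calder\'on--Zygmund bound $\int_{E_n^{\lambda}}|\nabla_{h_n}z_n^{\lambda}|^p\le C\int_{\{|\nabla_{h_n}\tilde w^{h_n}|>\lambda/C\}}|\nabla_{h_n}\tilde w^{h_n}|^p$. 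All constants depend only on $N$ and $\omega$: the weak-type estimate for the maximal function is dimensional and scale invariant, and since $\dist(\Omega,\partial\Omega'')>0$ one needs, on $\Omega$, only the maximal function over balls whose $D_{h_n}$-image stays interior to the rescaled $\Omega''$, so no further extension across $\partial\Omega''$ enters.

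Finally I would diagonalize exactly as in the proof of Lemma \ref{prvavazna}: take $\lambda_k=k$, pass to a subsequence $(h_{n(k)})_k$ growing slowly enough, and set $z_k:=z_{n(k)}^{\,k}$ restricted to $\Omega$. Then (i) holds with constant $C(N)$; (ii) holds because $|E_{n(k)}^{\,k}|\le C k^{-p}\to0$; and (iii) follows from the sharp bound of the previous step (which forces the contribution of $E_{n(k)}^{\,k}$ to $\int|\nabla_{h_{n(k)}}z_k|^p$ to vanish along a slow enough subsequence) combined with the same equi-integrability book-keeping already used for Lemma \ref{prvavazna}. To summarise: the argument is the Fonseca--M\"uller--Pedregal proof with $\nabla$ replaced by $\nabla_{h}$ and all bounds transported through the dilation $D_{h}$; the single place where a new idea is needed, and the main obstacle, is the extension, handled by subtracting the cross-sectional average and invoking Poincar\'e's inequality on $\omega$.
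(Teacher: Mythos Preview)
Your extension via subtraction of the cross-sectional mean and Poincar\'e is correct and is essentially what the paper's vaguer ``standard extension techniques \ldots\ while keeping the boundedness properties'' amounts to. The gap is in the truncation step, and it is a real one.

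After rescaling by $D_{h_n}$, your domain $D_{h_n}(\Omega'')$ has thickness of order $h_n$ in $x'$ but length of order $L$ in $x_1$. The Haj\l asz-type inequality underlying the Lipschitz truncation requires, in order to certify $C\lambda$-Lipschitz on the good set $\{M(|\nabla v|)\le\lambda\}$, that the function be defined on balls of radius comparable to $|x-y|$; for $x,y\in D_{h_n}(\Omega)$ at $x_1$-distance of order $L$ such balls leave the thin cylinder. Your assertion that ``one needs only balls whose $D_{h_n}$-image stays interior'' is therefore incorrect: balls of fixed radius in the \emph{original} coordinates become, after $D_{h_n}$, ellipsoids of aspect ratio $1/h_n$ (not the round balls of the maximal function on the rescaled domain), while round balls in the \emph{rescaled} coordinates that stay interior have radius at most $Ch_n$ and only yield Lipschitz control between nearby points. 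Either reading fails to deliver (i) with an $h$-independent constant, and a Kirszbraun extension from a set on which only a local Lipschitz bound holds does not give the global bound.

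The paper (following Braides--Zeppieri \cite{BraidesZeppieri07}) avoids this via periodisation and pigeonhole rather than direct truncation on the thin domain. After extending $w^{h_n}$ slice-wise from $\omega$ to $Q^2=(0,1)^2$ and rescaling the cross-section to $(0,h_n)^2$, it reflects and extends $2h_n$-periodically in $x'$ to fill the \emph{fixed} domain $(0,L)\times Q^2$, on which Lemma~\ref{prvavazna} applies with fixed constants to produce $v_k$. An averaging argument then locates a single cell $(0,L)\times(jh_{n(k)},(j+1)h_{n(k)})^2$ on which both the de~la~Vall\'ee~Poussin integral $\int\varphi(|\nabla v_k|^p)$ and the measure of the exceptional set are at most a fixed multiple of $h_{n(k)}^2$; restricting $v_k$ to that cell, undoing the reflection/translation, and rescaling back gives $z_k$ on $\Omega$ with the desired properties. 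In short, periodisation replaces the unavailable $h$-uniform extension of the thin rescaled cylinder, and pigeonhole brings one back to a single period.
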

\begin{proof}
In \cite{BraidesZeppieri07} the authors provide a general proof for the function space $W^{1,p}(\omega_\alpha\times \omega_\beta;\mathbb{R}^m)$ where $\omega_\alpha \subset \mathbb{R}^{n}$ and $\omega_\beta \subset \R^{l}$ and $\left\{n,m,l\right\}$ are arbitrary space dimensions. For completeness  we give the proof for our case.

By de la Vall\'{e}e Poussin's Criterion a sequence $(\zeta_k)_{k \in \N} \subset L^{1}(\Omega;\R^m)$ is equi-integrabile if and only if there exists a positive Borel function   $\varphi:\left[0,\infty\right) \to \left[0,\infty\right]$ such that
$$\lim_{t \to +\infty}{\frac{\varphi(t)}{t}}=+\infty  \quad \mbox{and} \quad \sup_{k} \int_{\Omega} \varphi(\vert \zeta_k\vert) < + \infty.$$

By translation and dilatation we can assume without loss of generality that $\omega \subset Q^2$, where $Q^2=(0,1)^2$.
Let $(w^h)_{h>0}$ be a given bounded sequence in $W^{1,p}(\Omega,\R^3)$ such that $\Vert (\nabla_h w^h)\Vert_{L^p}$ is also bounded. By using standard extension techniques  we extend the definition of $w^h$ to $W^{1,p}((0,L)\times Q^2;\R^3)$ (the extension is done for every fixed $x_1 \in (0,L)$), while keeping the boundness properties.
We break the proof in several steps.
\begin{enumerate}
\item[1.] Define the functions $\hat{w}^{h_n}(x):=w^{h_n}\left(x_1,\frac{x'}{h_n} \right)$ on  a strip $(0,L)\times(0,h_n)^2$. Then $\hat{w}^{h_n}$ is in $ W^{1,p}((0,L)\times(0,h_n)^2;\R^3)$ and from the boundness of $w^{h_n}$ and $\nabla_{h_n}w^{h_n}$, by rescaling the integrals on the new domain we obtain that there is a constant $C>0$ such that
\begin{equation}\label{scalest}
 \frac{1}{h_n^2} \int_{(0,L)\times(0,h_n)^2} \vert \hat{w}^{h_n}\vert^p \ud x +  \frac{1}{h_n^2} \int_{(0,L)\times(0,h_n)^2} \left(\vert \partial_1{\hat{w}^{h_n}}\vert^p +\vert \nabla'\hat{w}^{h_n}\vert^p\right)  \ud x \leq C.
\end{equation}

\item[2.] Next, define $\tilde{w}^{h_n}$ on $(0,L)\times(-h_n,h_n)^2$ by reflecting the functions $\hat{w}^{h_n}$ with respect to the $x_2$ and $x_3$ variable $\tilde{w}^{h_n}(x)=\hat{w}^{h_n}(x_1,\vert x_2\vert,\vert x_3\vert)$. We define the functions $\bar{w}^{h_n}(x)(x)=\tilde{w}^{h_n}(x_1,x'-(2i h_k ,2j h_k )), i,j\in \mathbb{Z}$ on the $(0,L)\times \R^2$ by periodically extending $\tilde{w}^{h_n}$. From the construction of $\tilde{w}^{h_n}$ it easy to see that $\bar{w}^{h_n} \in W^{1,p}_{loc}((0,L)\times \R^2,\R^3)$. Now since $(0,L) \times Q^2$ is contained in $(\lfloor \frac{1}{2h_n}\rfloor+2)^2$ cubes and since $\tilde{w}^{h_n}$ is symmetric with respect to $x_2$ and $x_3$ axes we derive that for $n$ large enough:
\begin{eqnarray*}
\int_{(0,L) \times Q^2} \vert \bar{w}^{h_n}\vert^p \ud x &\leq& 4 \left(2+\left\lfloor \frac{1}{2h_n}\right\rfloor\right)^2 \int_{(0,L)\times(0,h_n)^2} \vert \hat{w}^{h_n} \vert^p \ud x\\  &\leq& \frac{4}{h_n^2} \int_{(0,L)\times(0,h_n)^2} \vert \hat{w}^{h_n} \vert^p \ud x.
\end{eqnarray*}
Thus,  from (\ref{scalest}) we deduce that  $\tilde{w}^{h_n}$ is bounded with respect to $n$. Using the same arguments  the gradients $\nabla\bar{w}^{h_n}$ are also bounded with respect to $n$.

\item[3.] Since the sequences $(\bar{w}^{h_n})_{n \in \N}$ satisfy the assumptions of the lemma (\ref{prvavazna}), there is a sequence $(v_k)_{k \in \N} \subset W^{1,p}((0,L)\times Q^2)$ such that $\vert \nabla v_k \vert< C(N) k $ \emph{a.e.} on $(0,L)\times Q^2$ and
$$ \lim_{k \to \infty} \left|(0,L)\times Q^2  \cap \{ v_{k} \neq w^{h_{n(k)}} \textrm{ or } \nabla v_{k} \neq  \nabla w^{h_{n(k)}} \} \right|=0 $$
and $(\vert \nabla v_k \vert^p )$ is equi-integrable on $(0,L)\times Q^2$. By de la Vall\'{e}e Poussin's criterion  then there is a positive Borel function $\varphi:\left[0,\infty\right) \to \left[0,\infty\right]$ such that
$$\lim_{t \to +\infty}{\frac{\varphi(t)}{t}}=+\infty  \quad \mbox{and} \quad \sup_{k} \int_{(0,L)\times Q^2} \varphi(\vert \nabla v_k\vert^p) < + \infty.$$
 We denote with
 \begin{eqnarray*}
 M_k &=&\int_{(0,L)\times Q^2} \varphi(\vert \nabla v_k\vert^p) \\
 m_k &=&\left|(0,L)\times Q^2  \cap \{ v_{k} \neq w^{h_{n(k)}} \textrm{ or } \nabla v_{k} \neq  \nabla w^{h_{n(k)}} \}\right|
 \end{eqnarray*}
and (by Lemma \ref{prvavazna})  $\sup_{k} M_k<\infty$ and $\lim_{k \to \infty} m_k =0$.
\item[4.]
It is easy to argument that for $k$ large enough there exists a part of the domain $S_j^{h_{n(k)}} \subset (0,L) \times Q^2$ of the form $S_j^{h_{n(k)}}=(0,L) \times (jh_{n(k)},(j+1)h_{n(k)})^2$ such that
\begin{eqnarray*}
\int_{S_j^{h_{n(k)}}} \varphi(\vert \nabla v_k\vert^p) &\leq&  3 h_{n(k)}^2 M_k, \\
\left|S_j^{h_{n(k)}} \cap \{ v_{k} \neq w^{h_{n(k)}} \textrm{ or } \nabla z_{k} \neq  \nabla w^{h_{n(k)}} \}\right| &\leq& 3 h_{n(k)}^2 m_k.
\end{eqnarray*}

\item[5.] Finally, we  define the functions $\tilde{z}_k=v_k|_{S_j^{h_{n(k)}}}$ and the functions $z_k \in W^{1,p}((0,L) \times Q^2;\R^3)$ by translation, dilatation in $x_2,x_3$ variable and possible reflection of the functions $\tilde{z}_k$.
\end{enumerate}
\end{proof}

{\bf Acknowledgement.}
The authors were informed about the decomposition in Lemma \ref{lmgriso} at the conference "Third workshop on thin structures" in Naples, September, 2013. It was there announced as part of the work (and more general theorem) of J. Casado-Diaz, M. Luna-Laynez and F.J. Suarez-Grau.
Here we used that in the rod case this decomposition is a consequence of Griso's decomposition.
We are gratefull to M. Luna-Laynez on a personal communication.

\bibliographystyle{alpha}

\end{document}